\numberwithin{equation}{section}
\theoremstyle{plain}
\newtheorem{theorem}{Theorem}[section]
\newtheorem{lemma}[theorem]{Lemma}
\newtheorem{proposition}[theorem]{Proposition}
\newtheorem{corollary}[theorem]{Corollary}
\newtheorem{assumption}[theorem]{Assumption}
\theoremstyle{definition}
\newtheorem{definition}[theorem]{Definition}
\newtheorem*{definition*}{Definition}
\newtheorem{notation}[theorem]{Notation}
\newtheorem{remark}[theorem]{Remark}
\newtheorem*{remark*}{Remark}
\newtheorem{example}[theorem]{Example}
\DeclareMathOperator{\sgn}{sgn}
\DeclareMathOperator{\sgrad}{sgrad}
\DeclareMathOperator{\grad}{grad}
\newcommand{\const}{\mathrm{const}}
\newcommand{\specialcell}[2][c]{%
\begin{tabular}[#1]{@{}c@{}}#2\end{tabular}}
\begin{document}
\title{Bifurcations of Magnetic Geodesic Flows on Surfaces of Revolution}

\date{
}

\author{I.\,F.~Kobtsev$^{*,1}$ and E.\,A.~Kudryavtseva$^{**,***,2}$
\footnote{Affiliations:\\
$^{*}$Bauman Moscow State Technological University, 5/1 2-ya Baumanskaya st., Moscow, 105005 Russia, \\ 
$^{**}$ Faculty of Mechanics and Mathematics, Moscow State University, Leninskie Gory 1, Moscow, 119991 Russia, \\
$^{***}$ Moscow Center for Fundamental and Applied Mathematics, Leninskie Gory 1, Moscow, 119991 Russia \\
$^1$E-mail: {\tt int396.kobtsev@mail.ru}\\
$^2$E-mail: {\tt eakudr@mech.math.msu.su}
}
}
\maketitle
\begin{abstract}
We study magnetic geodesic flows invariant under rotations on the 2-sphere. The dynamical system is given by a generic pair of functions $(f,\Lambda)$ in one variable. Topology of the Liouville fibration of the given integrable system near its singular orbits and singular fibers is described. Types of these singularities are computed.
Topology of the Liouville fibration on regular 3-dimensional isoenergy manifolds is described by computing the Fomenko--Zieschang invariant.
All possible bifurcation diagrams of the momentum maps of such integrable systems are described.
It is shown that the bifurcation diagram consists of two curves in the $(h,k)$-plane. One of these curves is a line segment $h=0$, and the other lies in the half-plane  $h\ge0$ and can be obtained from the curve $(a:-1:k) = (f:\Lambda:1)^*$ projectively dual to the curve $(f:\Lambda:1)$ by the transformation $(a:-1:k)\mapsto(a^2/2,k)=(h,k)$.
%
\end{abstract}


\tableofcontents

\section{Introduction} 

Recently, the topology of integrable Hamiltonian systems, in particular, with two degrees of freedom, has been actively studied. It is based on the theory of classifying invariants of such systems, constructed in the works of A.\,T.~Fomenko and his school. See e.g.~\cite{Fom:86_1, Fom:86_2, Fom:87, Fom:90, BRF:2000, IGS}.
The rough classification of systems is based on the Fomenko invariant, and the Liouville classification is based on the Fomenko--Zieschang invariant, the so-called marked molecule (a graph with numerical marks). A.\,T.~Fomenko put forward a program to create an Atlas of Integrable Systems, where it is necessary to list and classify systems for which it is possible to calculate their classifying invariants. Such an Atlas is extremely useful, in particular, for detecting Liouville equivalent systems. Many such systems have already been discovered. Among them were integrable systems, which seemed to be significantly different, however, as it turned out, they are Liouville equivalent on non-singular isoenergy 3-dimensional manifolds. This circumstance reveals previously hidden connections between various problems of mechanics, physics, geometry and topology. 

This work is devoted to the topological analysis of the geodesic flow for a metric of revolution on the 2-sphere in the presence of gyroscopic forces. 
It is known that a surface of revolution is determined by a function $f=f(r)$ in one variable  $r \in [0,L]$ \cite{Kant, Tim, KZAnt}, and a magnetic field invariant under rotations on this surface is determined by another function $\Lambda=\Lambda(r)$ in the same variable \cite{KO:20, VP:2023, KK:25}. Thus the pair of functions $(f, \Lambda)$ determines a magnetic geodesic flow on this surface which turns out to be an integrable Hamiltionian system with two degrees of freedom.
For this integrable system, the semi-local singularities of ranks 0 and 1 are classified (Theorems~\ref {th3.3} and~\ref{th3.7}), provided that the pair of functions $f,\Lambda$ defining the magnetic geodesic flow is generic (see Assumption~\ref{assump2.4}), furthermore the topological Fomenko--Zieschang invariants are calculated (Theorems~\ref {th4.2} and~\ref {th4.8}). It is shown that the Liouville foliation of a magnetic geodesic flow on a sphere, in addition to 2-parameter families of regular fibers (Liouville tori), has two non-degenerate points of rank $0$ (center-center type), a finite number of 1-parameter families of critical rank-1 fibers (of elliptic and hyperbolic types) and a finite number of isolated rank-1 fibers: degenerate (Fig.~\ref {fig_2}) and non-degenerate (Fig.~\ref {fig_3}, \ref {fig_5}).

We also describe all bifurcation diagrams for magnetic geodesic flows on a sphere (Theorem~\ref{th5.17}) and establish a connection between the geometric properties of the bifurcation diagram (and the curve $(f,\Lambda)$) and dynamical properties of the system
(Theorem~\ref{th5.13} and Fig.~\ref {fig_3}, \ref {fig_7}, \ref {fig_8}, \ref {fig_9}), with which in many cases it is possible to construct the bifurcation complex from the curve $(f,\Lambda)$ or from the bifurcation diagram (Fig.~\ref{fig_10}).

\subsection{The history of the problem under investigation}
Let $M$ be a closed two-dimensional Riemannian manifold invariant 
under a $S^1$-action, and let $(q,p)$ be canonical coordinates on $T^*M$. Consider a Hamiltonian system on $T^*M$ with the symplectic form $\omega=dp\wedge dq$ and the Hamiltonian $H(q,p)=\frac{1}{2}g^{ij}(q)p_i p_j$. The question arised: what are the invariants of the Liouville equivalence of this system? In this setting, it is sufficient to study the Liouville foliation at only one energy level (i.e., calculate the Fomenko--Zieschang invariant of the Liouville foliation on $Q^3_h=\{H=h\}$), since the Liouville foliation of a geodesic flow is the same at all energy levels.

A solution to this problem for a geodesic flow on a two-dimensional sphere with a rotation metric was obtained by T.\,Z.~Nguyen and L.\,S.~Polyakova (see~\cite[vol.~2, Theor.~3.9]{IGS}) under the condition that the corresponding first integral is Bott, which is equivalent to the condition that the corresponding function $f(\theta)$ is Morse. Let's outline their main results:
\begin{itemize}
	\item the Fomenko molecule (the Reeb graph of the additional first integral restricted to an isoenergy manifold, with indicated types of bifurcations of Liouville tori at its vertices) has the form $W-W$, where $W$ is either an atom $A$ or a tree (terminal vertices are atoms $A$, non-terminal ones are saddle atoms of the form $V_l$);
	
	\item marks on any edge $A-V_l$ are $r=0$, $\varepsilon=1$;
	
	\item marks on a non-central edge $V_k-V_l$ (connecting two atoms inside $W$) are $r=\infty$, $\varepsilon=1$;
	
	\item marks on the central edge 
    $V_k-V_l$ (connecting two copies of $W$) in the case when $W$ has vertices with saddle atoms are $r=\infty$, $\varepsilon=-1$. If $W$ is just one atom $A$, then $r=1/2$, $\varepsilon=1$;
	
	\item the molecule has a unique family of saddle atoms marked with $n=2$;
	
	\item the isoenergy manifold $Q^3_h$ is diffeomorphic to $\mathbb{R}P^3$.
\end{itemize}

From the geodesic flow problem on a two-dimensional sphere $M$ with a rotation metric, a number of similar statements can be obtained by changing the Hamiltonian or the manifold $M$ itself.

The first method can be implemented, for example, by adding potential to the geodesic flow of the sphere, as was done by E.\,O.~Kantonistova in~\cite{Kant}. Here are the main results of this work.

Let $M^2\cong S^2$ be a two-dimensional Riemannian manifold of revolution.
One can introduce semi-geodesic coordinates $(r,\varphi)$ on it, in which the metric has the form $ds^2=dr^2+f^2(r)d\varphi^2$. Let $(q,p)$ be canonical coordinates on $T^*M$ with symplectic form $\omega=dp\wedge dq$ and the Hamiltonian $H(q,p)=\frac{1}{2}g^{ij}(q)p_i p_j+V(q)$. In the coordinates $q=(r,\varphi)$, they have the form $\omega=dp_r\wedge dr + dp_\varphi\wedge d\varphi$, $H=\dfrac{p_r^2}{2}+\dfrac{p_\varphi^2}{2f^2(r)}+V(r)$.

Under certain conditions on the functions $f(r)$, $V(r)$, the following results were obtained in~\cite{Kant} compared to the case of geodesic flow on the sphere:
\begin{itemize}
\item the mark $n$ on the single family can be equal to $0$, $1$ or $2$;
	
\item $Q^3_h$ is diffeomorphic to $S^3$, $S^1\times S^2$ or $\mathbb{R}P^3$ respectively;
    
\item singular points of rank $0$ are described, a classification of degenerate and non-degenerate singularities of rank $1$ is given, and properties of bifurcation diagrams are described.
\end{itemize}

The second way is to take another configuration manifold (while keeping the requirement of $S^1$-invariance of the metric and the potential). This approach was applied by D.\,S.~Timonina~\cite{Tim}.  For systems on the torus and the Klein bottle, the molecules were constructed and the marks were computed. In this problem, atoms with stars ($A^*$, $A^{**}$) appeared for the first time. Thus, in~\cite{Tim}, the Fomenko--Zieschang invariants for these integrable systems were completely computed.

The problem of the motion of a particle in a potential field on a projective plane was considered by E.\,I.~Antonov and I.\,K.~Kozlov in~\cite{KZAnt}. In this work, the ideas of V.\,S.~Matveev (\cite[vol.~2, Theor.~3.11]{IGS}, the case of zero potential on the projective plane) and D.\,S.~Timonina are developed  \cite{Tim}. In this formulation of the problem, new properties also appeared -- in particular, the appearance of the mark $r=1/4$ (previously there were only $0$, $1/2$, $\infty$) and the appearance of an isoenergy manifold diffeomorphic to the lens space $L_{4,1}$. Thus, in~\cite{KZAnt}, the Fomenko--Zieschang invariants on the nonsingular isoenergy manifold $Q^3_h$ are completely calculated and the topology of such $Q^3_h$ is found.

The article~\cite{KZO:E3} was devoted to a related problem: the study of an integrable system with a linear in momenta periodic first integral on the Lie algebra $e(3)$. In this work, non-degenerate singularities of ranks $0$ and $1$ were classified, bifurcations of Liouville tori on $Q^3_h$ were described, the topology of $Q^3_h$ was found.

The paper~\cite{KO:20} studied the problem obtained from the problem posed in~\cite{Kant} by adding a magnetic field, i.e., gyroscopic forces (while preserving the requirement of $S^1$-invariance of the Riemannian metric, magnetic field and potential). Under certain ``genericity'' conditions, non-degenerate and degenerate singularities of ranks $0$ and $1$ were classified~\cite{KO:20} in terms of the triple of functions defining this system. The Fomenko molecules for $Q^3_h$ were found~\cite{KO:20} (including a description of bifurcations of Liouville tori). Marks on the edges of the molecule connecting vertices with saddle 3-atoms, and the topology of $Q^3_h$ were also described~\cite{KO:20}.

In this paper, a more detailed study of the problem from~\cite{KO:20} is given in the special case when the potential is zero. In particular, all marks for the Fomenko--Zieschang invariants encoding the Liouville foliation on isoenergy surfaces have been calculated (thereby completing the calculation of marks started in~\cite{KO:20}). In addition, a number of issues have been resolved that were not investigated in the work \cite{KO:20} (see Theorems~\ref{th5.13} and~\ref {th5.17};
the results obtained in them are described in the beginning of Introduction).

In solving this problem, several new interesting phenomena have been discovered in this work, showing that such a problem is of interest. First, it is shown that various magnetic geodesic flows are characterized (given) by a planar curve, which is essentially arbitrary except for some boundary conditions at its end points (this is illustrated with two examples in Fig.~\ref {fig_10}). Thus, all the invariants of interest can be described in terms of this curve (Theorems~\ref{th3.3}, \ref {th3.7}, \ref{th4.2}, \ref {th4.8}, \ref {th5.13} and~\ref {th5.17}). At the same time, a somewhat unexpected geometric fact was discovered: to describe these invariants, it is useful to switch to a projectively dual curve (section \ref{su5.1}). Secondly, a new type of degenerate singularities has been discovered, a so-called ``asymmetric elliptic fork'' (Theorem~\ref{th3.7}), which was not mentioned in other works, including~\cite{KO:20}. Thus, even without a potential, the system turns out to be very rich.

\section{Magnetic geodesic flow $S(f,\Lambda)$ on a spherical surface of revolution} \label{s2}

Let's proceed with a more detailed description of the dynamical system studied in this paper. This system can be obtained on the basis of a geodesic flow on a two-dimensional manifold by introducing additional gyroscopic forces. 
Since the action of a magnetic field on a charged particle is described in this way, we will call such a system a ``magnetic geodesic flow'' on a two-dimensional surface of revolution. 

Let's give a strict definition of this concept.
\begin{definition}
    {\em Magnetic geodesic flow} on a Riemannian manifold $(M,g)$ is a dynamical system on $T^*M$ with Hamiltonian $H=\dfrac{1}{2}g^{ij}(q)p_i p_j$ and symplectic structure $\widetilde{\omega}=dp \wedge dq+\beta$, where $q=(q^1,q^2)$ are local coordinates on $M$, $p=(p_1,p_2)$ are the corresponding conjugate momenta, $g^{ij}$ is the matrix inverse to the matrix of the metric $g$, and $\beta$ is a closed 2-form on $M$ (determining a magnetic field).
\end{definition}

The choice of the manifold $M$ is restricted by the conditions of V.\,V.~Kozlov's theorem in~\cite{VKZ:79}: a first integral of a geodesic flow, independent with $H$, exists only on two-dimensional surfaces of genus $0$ and $1$. This means that $M$ can be diffeomorphic to either a sphere, or a torus, or a projective plane, or a Klein bottle. In this paper, the first case is studied, and the second case will be studied in \cite{KK:25}.

Now let $(M,ds^2)$ be a two-dimensional Riemannian manifold diffeomorphic to the sphere $S^2$. Let's also assume that the Riemannian metric on $M$ is invariant under rotations, i.e., a smooth $S^1$-action is defined on $M$ by isometries. As one knows, such an effective $S^1$-action has exactly two fixed points; let's call them the north and south poles ($N$ and $S$ respectively). Let $L$ be the length of a geodesic connecting the poles. One can show~\cite[Prop.~4.6 (ii)]{Besse:78} that the Riemannian metric $ds^2$ on $M\setminus\{N,\,S\}$ can be written as $ds^2=dr^2+f^2(r)d\varphi^2$, where $r\in (0,L)$, $\varphi\mod2\pi$ is an angular coordinate,
$f: \left[0,L\right] \to\mathbb{R}_{+}$ is a smooth function satisfying the conditions
\begin{equation}
\label{eq2.1}
    f|_{(0,L)}>0, \qquad f^{(2j)}(0)=f^{(2j)}(L)=0, \quad j\in\mathbb Z_+, \qquad f'(0)=1, \quad f'(L)=-1.
\end{equation}
The constructed coordinate system $(r,\varphi) \in (0,L)\times S^1$ on $M\setminus\{N,\,S\}$ is called {\em semi-geodesic}.

Let's also assume that $M$ has a smooth 2-form $\beta$ that is invariant under the $S^1$-action. It is known (see, for example, \cite[Lemma~1]{KO:20}) that, in semi-geodesic coordinates, it has the form $\beta=\Lambda'(r)dr\wedge d\varphi$, where $\Lambda: [0,L]\to\mathbb R$ is a smooth function satisfying the conditions
\begin{equation} \label{eq2.2}
\Lambda^{(2j+1)}(0) = \Lambda^{(2j+1)}(L) = 0, \quad j\in\mathbb Z_+.
\end{equation}

\begin{definition} \label{def2.2}
The magnetic geodesic flow on $M$ is a dynamical system on $T^*M$ with the Hamiltonian $H$ and the symplectic structure $\widetilde{\omega}$ on $T^*M$, which have the following form in coordinates corresponding to semi-geodesic coordinates on $M\setminus\{N,\,S\}$:
\begin{equation}
\label{eq2.3}
    H=\dfrac{p_r^2}{2}+\dfrac{p_\varphi^2}{2f^2(r)}, \quad \widetilde{\omega}=dp_r \wedge dr + dp_\varphi \wedge d\varphi+\Lambda'(r)dr \wedge d\varphi.
\end{equation}
The latter summand in the expression~\eqref{eq2.3} of the 2-form $\widetilde{\omega}$ is the 2-form $\beta=\Lambda'(r)dr\wedge d\varphi$ of the gyroscopic forces, the so-called 2-form of the {\em magnetic field}.
\end{definition}

From now on, we will denote such a system by $S(f,\Lambda)$ for brevity. It is directly verified that this system has an additional first integral 
$$
K=p_\varphi+\Lambda(r),
$$
and the Hamiltonian flow generated by it is $2\pi$-periodic. This proves that the magnetic geodesic flow is a completely Liouville-integrable Hamiltonian system.

\begin{example} \label{ex2.3}
For example, consider two classes of magnetic geodesic flows on a sphere.
\begin{itemize}
    \item[(a)] If $\Lambda'(r)=\lambda f(r)$ where $\lambda=\const$, then the magnetic field is proportional to the area form and we get a {\em uniform magnetic geodesic flow}.
    \item[(b)] If $\Lambda(0)=\Lambda(L)$, then the magnetic field is {\em exact} (i.e., $\beta=dA$, where $A=(\Lambda(r)-\Lambda(0)) d\varphi$ is a smooth 1-form, the so-called magnetic potential), and the mapping $(p_r,\,p_\varphi,\,r,\,\varphi) \mapsto(p_r,\,\widetilde p_\varphi=K-\Lambda(0),\,r,\,\varphi)$ extends to a diffeomorphism of $T^*M$ onto itself, transforming the symplectic structure $\widetilde\omega$ into the standard form $\omega=dp_r\wedge dr +d\widetilde p_\varphi\wedge d\varphi$. Let's replace $p_\varphi\to\widetilde p_\varphi=\widetilde K$ and,
    by abusing notations, we will denote $\widetilde{K}$ by $K$. 
    After this change, the function $H$ and the symplectic structure will take the form
\begin{equation} \label{eq2.4}
H=\dfrac{p_r^2}{2}+\dfrac{(K + \Lambda(0)-\Lambda(r))^2}{2f^2(r)}, \quad \omega=dp_r \wedge dr + dK \wedge d\varphi.
\end{equation}
In the case of $\Lambda(0)=\Lambda(L)$, the representations~\eqref{eq2.3} and~\eqref{eq2.4} of the magnetic geodesic flow are equivalent (where each representation is considered together with its extention from $T^*(M\setminus\{N,S\})$ to $T^*M$ by continuity).
\end{itemize}
For any magnetic geodesic flow $S(f,\Lambda)$, the representation~\eqref{eq2.4} can be extended to $T^*(M\setminus\{S\})$. For $T^*(M\setminus\{N\})$, one should use the representation of the magnetic potential in the form of $(\Lambda(r)-\Lambda(L)) d\varphi$.
The intersection of two classes from Example \ref {ex2.3}, i.e., uniform and exact magnetic geodesic flows, consists of geodesic flows on surfaces of revolution, which means that the magnetic field vanishes.
Indeed, for uniform magnetic field, we have $\Lambda'(r)=\lambda f(r)$ where $\lambda=\const$, therefore $\Lambda(L)-\Lambda(0)=\lambda\int\limits_{0}^{L}f(s)ds$. Due to~\eqref{eq2.1}, this integral is positive, thus the exactness condition is equivalent to $\lambda=0$, thus $\beta=0$.
\end{example}

We will not impose any other restrictions on the system, except for the smoothness conditions~\eqref{eq2.1} and~\eqref{eq2.2} and some genericity conditions (see conditions 1--6 in
Assumption~\ref{assump2.4} below). In particular, we will not require that the magnetic field is uniform or exact (see Example \ref {ex2.3}). The case of uniform exact magnetic geodesic flows of billiard type on some piecewise flat surfaces of revolution (homeomorphic to a sphere, torus, disk and cylinder) was studied in~\cite{VP:2023}.

Note that we do not require that the Riemannian manifold $(M,g)$ has an isometric immersion into $\mathbb R^3$ as a surface of revolution. Such an isometric immersion exists if and only if $|f'(r)|\le1$ for all $r\in[0,L]$ \cite{Besse:78}. Note that, after regular changes of the parametrization $r=r(\widetilde r)$ of the planar curve $(f,\Lambda)$, the topology of the Liouville foliation of the magnetic geodesic flow is preserved (Theorem~\ref{th5.17}), although Riemannian manifolds obtained in this way are not isometric to the original one. If we take the natural parameter $s=s(r)$ on the curve $(f,\Lambda)$ as the parameter $\widetilde r$, then the corresponding Riemannian manifold $(M,g_s)$ will already allow an isometric immersion into $\mathbb R^3$ as a surface of revolution obtained by rotating the planar curve $(f,\Lambda)$ around the line $f=0$. This surface of revolution is useful for describing the types of non-degenerate critical circles of the initial magnetic geodesic flow (see Theorems~\ref{th3.7},~\ref{th5.13} and Remark~\ref{rem5.15}).

\begin{assumption} \label{assump2.4}
It follows from~\eqref{eq2.1} and~\eqref{eq2.2}
that the functions $f,\Lambda$ determine a smooth Riemannian metric and a smooth magnetic field on a sphere (and therefore a smooth integrable Hamiltonian system $S(f,\Lambda)$) if and only if the following constraints are fulfilled:
\begin{enumerate}
    \item[1.] $f(r)>0$, $r\in(0,L)$, extends to a smooth odd $2L$-periodic function on $\mathbb{R}$;
    \item[2.] the function $\Lambda(r)$ extends to a smooth even $2L$-periodic function on $\mathbb{R}$;
    \item[3.] $f'(0)=1$, $f'(L)=-1$.
\end{enumerate}
Additionally, we will assume that a pair of $2L$-periodic functions $f,\Lambda$ is {\em generic} in the following sense:
\begin{enumerate}
    \item[4.] $f(r)$, $\Lambda(r)$ are Morse functions;
    \item[5.] $(\Lambda'(r))^2+(f'(r))^2>0$ (i.e., the planar curve $(f(r),\Lambda(r))$, $0\le r\le L$, is regular);
    \item[6.] the function $f'\Lambda''-f''\Lambda'$ has only simple zeros (geometrically, this means that the oriented curvature of the planar curve $(f,\Lambda)$ has only simple zeros, i.e., all straightening points of the curve $(f,\Lambda)$ are inflection points).
\end{enumerate}
In Lemma~\ref{le5.11}, we will give conditions (i)--(iv) equivalent to the conditions 1--6.
\end{assumption}

\begin{remark*}
It is not difficult to show that pairs of functions satisfying the conditions 1--6 from Assumption~\ref{assump2.4} {\em(genericity 
conditions)} form an open dense subset in the space of pairs of functions satisfying the conditions~\eqref{eq2.1} and~\eqref{eq2.2}. The authors do not know whether a similar property is fulfilled for the conditions imposed on functions in the works~\cite{Kant, KO:20}.
\end{remark*}

\section{Semi-local and semi-global 4D singularities of the magnetic geodesic flow $S(f,\Lambda)$} \label {s3}

Recall that the mapping 
$$
\mathcal{F}=(H,K):T^*M\to\mathbb R^2
$$
is called the {\em momentum mapping} of the given integrable system.
Consider the {\em Liouville foliation} on $T^*M$ associated with the given integrable system, whose fibers are connected components of the integral submanifolds $\{H=\const,\ K=\const\}$. Two integrable systems are called {\em Liouville equivalent} (and the corresponding Liouville foliations have the same topology) if there exists a homeomorphism of their phase spaces transforming fibers of one foliation to fibers of the other.
The base of the Liouville foliation is called the {\em bifurcation complex}; it was introduced by A.T.~Fomenko (the cell-complex $K$ in \cite[\S 5]{Fom:88}, the affine variety $A$ in \cite{EG:2012} called the {\em unfolded momentum domain}); it is a (branched) covering over the image of the momentum map \cite{EG:2012}.

By {\em rank} of a point $(p,q)\in T^*M$, we will mean the rank of the mapping $d\mathcal{F}$ at this point. Points of ranks 0 and 1 are called {\em critical} or {\em singular} points of the mapping $\mathcal{F}$. The image of the set of critical points under the mapping $\mathcal{F}$ is called the {\em bifurcation diagram} of the momentum mapping.
For the definition of a {\em non-degenerate} critical point and its type, see~\cite{IGS}.

Consider the Hamiltonian $\mathbb R^2$-action on $T^*M$ generated by the first integrals $H,K$. Clearly, each orbit of this action is contained in a fiber. 
A fiber (respectively orbit) is called {\em singular} if it contains a singular point.

By a {\em local} (respectively {\em semi-local}, {\em semi-global}~\cite{vu2003}) {\em singularity}~\cite{zung96a, zung03a, bol:osh06, Hansmann, BGK:2018} of the Liouville foliation, we will mean the germ of the Liouville foliation at a singular point (respectively orbit, fiber). These are 4D singularities. For the Liouville foliation on a regular isoenergy 3-manifold $Q^3_h$, one obtains 3D singularities.
Since Liouville equivalent systems have the same bifurcation complex, it is a global 4D topological invariant.

\subsection{Semi-local 4D singularities}

\begin{definition}
By the {\em effective potential} of the system $S(f,\Lambda)$, we mean the function 
\begin{equation}
\label{eq3.1}
    U_k(r)=\dfrac{(k-\Lambda(r))^2}{2f^2(r)},
\end{equation}
where $k$ is a real parameter.
\end{definition}

\begin{notation} \label {not3.2}
Throughout the rest of the paper, we will denote:
\begin{itemize}
    \item $\{r_i\}_{i=1}^n$ are critical points of the function $f$ on $\left[0,L\right]$, $\{r^*_j\}_{j=1}^N$ are critical points of the function $\Lambda$ on $\left[0,L\right]$, $\{r_\ell^\circ\}_{\ell=1}^m$ are zeros of the function $f'\Lambda''-f''\Lambda'$ on $\left[0,L\right]$ (the points of each of the three sets are listed in ascending order), 
    the conditions 1--6 of Assumption~\ref{assump2.4} imply that these three sets do not intersect pairwise;

    \item $I=\left[0,L\right] \setminus \{r_i\}_{i=1}^n$;

    \item $\mathcal{O}_{\rho,k} = \{ (p_r, \,K, \, r, \, \varphi) : p_r=0,\,K=k,\,r=\rho,\,\varphi\in \mathbb{R} / 2\pi \mathbb{Z}\}$, a circle depending on a pair of parameters $\rho\in(0,L)$, $k\in \mathbb{R}$, which is an orbit of the Hamiltonian $S^1$-action generated by the first integral $K$.
\end{itemize}
\end{notation}

\begin{theorem}[{\cite[Propositions~1 and~2~(A)]{KO:20}}] \label{th3.3}
If the functions $f,\Lambda$ satisfy the genericity conditions~1--5 from Assumption~\ref{assump2.4}
then the singular points of the magnetic geodesic flow $S(f,\Lambda)$ on the sphere are exhausted by the following list:
\begin{enumerate}
    \item two singular points $(0,\, N)$ and $(0,\,S)$ of rank 0, whose images under the momentum mapping $\mathcal F$ are the points $(0,\Lambda(0))$ and $(0,\Lambda(L))$. They are non-degenerate and have a center-center type;
    \item two families of singular circles of rank 1 (which are $S^1$-orbits from Notation~\ref{not3.2}):
    \begin{itemize}
        \item $\mathcal{C}_1^k=\bigcup\limits_{r\in I\cap(0,L)} \mathcal{O}_{r,k(r)}$, where $k(r)=\Lambda(r)-f(r) \dfrac{\Lambda'(r)}{f'(r)}$,
        \item $\mathcal{C}_1^\Lambda=\bigcup\limits_{r\in (0,L)} \mathcal{O}_{r,\Lambda(r)}$.
    \end{itemize}
    The image of the family $\mathcal{C}_1^k$ under the momentum mapping $\mathcal F$ is the curve $\gamma_1 =\{\left(h(r),\,k(r)\right)\mid r\in I\cap(0,L)\}$, where $h(r)=\dfrac{1}{2}\left(\dfrac{\Lambda'(r)}{f'(r)} \right)^2$, and the image of the family $\mathcal{C}_1^\Lambda$ is the vertical interval $\gamma_2 = \{\left(0, \Lambda(r) \right)\mid r\in(0,\,L)\}$.
\end{enumerate}
\end{theorem}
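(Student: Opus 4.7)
My plan is to work in the semi-geodesic chart $(r,\varphi,p_r,p_\varphi)$ on $T^*(M\setminus\{N,S\})$ and to treat the poles separately. In this chart
\[
dH = p_r\,dp_r + \frac{p_\varphi}{f^2(r)}\,dp_\varphi - \frac{p_\varphi^2 f'(r)}{f^3(r)}\,dr,\qquad dK = dp_\varphi + \Lambda'(r)\,dr.
\]
The key observation is that $dK$ is nowhere zero in the chart (its $dp_\varphi$-coefficient is identically $1$), so no rank-$0$ point of $\mathcal F$ can live here: every such point must project to $N$ or $S$. Vanishing of the $2\times2$ minors of the $(dH,dK)$ matrix reduces to $p_r=0$ together with $p_\varphi\bigl(p_\varphi f'(r)+\Lambda'(r)f(r)\bigr)=0$, and I would split the rank-$1$ analysis into the two corresponding subcases. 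If $p_\varphi=0$, any $r\in(0,L)$ works and I recover the family $\mathcal C_1^\Lambda$ with $K=\Lambda(r)$, $H=0$. If $p_\varphi\ne0$, condition~5 of Assumption~\ref{assump2.4} forces $f'(r)\ne0$ (otherwise $f(r)\Lambda'(r)=0$ would force $\Lambda'(r)=0$ as well), giving $p_\varphi=-f(r)\Lambda'(r)/f'(r)$, hence $K=k(r)$ and $H=h(r)$, i.e.\ the family $\mathcal C_1^k$. Plugging these into $\mathcal F$ immediately produces $\gamma_1$ and $\gamma_2$.

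For the pole $N$, since a small disc about $N$ is contractible I would choose the rotationally symmetric gauge $A=(\Lambda(r)-\Lambda(0))\,d\varphi$ and apply the Darboux substitution $\widetilde p_\varphi = p_\varphi+\Lambda(r)-\Lambda(0)$ of Example~\ref{ex2.3}(b); passing to Cartesian Darboux coordinates $(x,y,p_x,p_y)$ at $N$, $\widetilde p_\varphi$ becomes $xp_y-yp_x$, so $K=xp_y-yp_x+\Lambda(0)$ and $H=\tfrac12(p_x^2+p_y^2)+\text{(higher order)}$. This identifies $(N,0)$ as a rank-$0$ point with $\mathcal F(N,0)=(0,\Lambda(0))$, and analogously $\mathcal F(S,0)=(0,\Lambda(L))$. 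A brief check on the fiber $T^*_N M$ — where $dK$ has only $(dx,dy)$-components while $dH$ has nonzero $(dp_x,dp_y)$-components for $p\ne0$ — rules out any further rank-$1$ orbit hiding over the poles.

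For non-degeneracy and the center-center type, I would Taylor-expand using conditions~1--2 of Assumption~\ref{assump2.4}, namely $f(r)=r+O(r^3)$ and $\Lambda(r)-\Lambda(0)=\tfrac12\Lambda''(0)r^2+O(r^4)$. Writing $\alpha:=\Lambda''(0)/2$, the quadratic parts at $(N,0)$ in the Darboux chart work out to
\[
H^{(2)} = \tfrac12\bigl((p_x+\alpha y)^2+(p_y-\alpha x)^2\bigr),\qquad K^{(2)}-\Lambda(0) = xp_y-yp_x.
\]
In the adapted coordinates $(x,y,P_x,P_y)$ with $P_x=p_x+\alpha y$, $P_y=p_y-\alpha x$, the linearized Hamiltonian flow of $H^{(2)}+cK^{(2)}$ becomes block-upper-triangular, with diagonal $2\times2$ blocks equal to $c$ and $(2\alpha-c)$ times a $90^\circ$-rotation generator; its four eigenvalues are therefore $\pm ic$ and $\pm i(2\alpha-c)$. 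For generic $c$ these are four distinct purely imaginary numbers exactly when $\alpha\ne0$, which is precisely the Morse hypothesis $\Lambda''(0)\ne0$ (condition~4 of Assumption~\ref{assump2.4}). Hence $(N,0)$ is non-degenerate of center-center type, and the same argument — with $\Lambda(L)$, $\Lambda''(L)$ in place of $\Lambda(0)$, $\Lambda''(0)$ — works at $S$.

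The main obstacle is this last step: the linearization of $X_H$ alone at $(N,0)$ has a double zero eigenvalue (the familiar ``cyclotron center'' degeneracy of a charged particle in a magnetic field), and one genuinely has to combine it with $X_K$ to recover non-degeneracy. The Morse hypothesis on $\Lambda$ at the poles is exactly what makes this possible; without it, the rank-$0$ singularity would be degenerate.
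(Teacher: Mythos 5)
Your proposal is correct and follows essentially the same route as the paper: the rank-1 classification via the factorization $p_\varphi\bigl(p_\varphi f'(r)+\Lambda'(r)f(r)\bigr)=f'(r)\bigl(K-k(r)\bigr)\bigl(K-\Lambda(r)\bigr)$ is the paper's computation in different notation, and the center--center verification at the poles again comes down to the eigenvalues $\pm ic$, $\pm i\bigl(c-\Lambda''(0)\bigr)$ of the linearized pencil, with the Morse condition $\Lambda''(0)\ne0$ doing exactly the work you describe. The only real difference is in bookkeeping at the pole: the paper keeps the twisted symplectic form $\widetilde\omega$ (with the magnetic block $l_1=\Lambda''(0)$) and computes the operators $A_H=\widetilde\omega^{-1}d^2H$, $A_K=\widetilde\omega^{-1}d^2K$ directly, whereas you gauge the magnetic term away near the pole and absorb it into the shifted momenta $P_x=p_x+\alpha y$, $P_y=p_y-\alpha x$ with $\alpha=\Lambda''(0)/2$ --- the two computations yield the same spectrum since $l_1=2\alpha$, so both arguments are sound.
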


\begin{remark}
(a) The first statement of Theorem~\ref{th3.3} (about the type of the singular points of rank $0$) will remain true even if the condition $(\Lambda'(r))^2+(f'(r))^2>0$ is not fulfilled. Both statements of the theorem will remain true even if, instead of the conditions~1--5 from Assumption~\ref{assump2.4}, the following weaker conditions are satisfied: \eqref{eq2.1}, \eqref{eq2.2}, $\Lambda''(0)\Lambda''(L)\ne0$ and $(f'(r))^2+(\Lambda'(r))^2>0$.

(b) Let us explain how magnetic geodesics behave near singular points from Theorem~\ref{th3.3}. Unlike geodesics, a magnetic geodesic is determined not only by the starting point and the tangent at that point, but also by the speed at this point. Near points of rank 0, we get slow rotations near the poles of the sphere: the magnetic geodesic is close to a circle of small radius, and the direction of rotation along this circle is determined by the sign of $\Lambda''(0)$ or $\Lambda''(L)$, respectively.  
The critical circle of each family from Theorem~\ref{th3.3} is a parallel $\{r=\const\}$. The critical circle of the first family $\mathcal C_1^k$ corresponds to the motion along a parallel with constant angular velocity $\frac{d\varphi}{dt}=\omega(r)$, uniquely determined by the Newton second law: the covariant acceleration (equal to $-\omega^2(r)f(r)f'(r)\frac{\partial}{\partial r}$) coincides with the magnetic force (equal to $\omega(r)\Lambda'(r)\frac{\partial}{\partial r}$), thus $\omega(r)=-\frac{\Lambda'(r)}{f(r)f'(r)}$. The second family  $\mathcal C_1^\Lambda$ consists of equilibria and coincides with the zero section of the bundle $T^*(M\setminus\{N,S\})$.
\end{remark}

\begin{proof}
Let's change the coordinates $(p_r, \, p_\varphi, \, r, \, \varphi) \to (p_r, \, K=p_\varphi+\Lambda(r), \,r,\, \varphi)$ (recall that these coordinates are defined only on $T^*(S^2\setminus\left\{N,S\right\}$)) and compute $dH$ and $dK$ in them:
\begin{equation}
\label{eq3.2}
    \begin{split}
	&dH = \left( p_r, \, \frac{K-\Lambda(r)}{f^2(r)}, \, -\dfrac{\left(K-\Lambda(r)\right)\left((K-\Lambda(r))f'(r)+\Lambda'(r)f(r)\right)}{f^3(r)}, \, 0 \right),\\
	&dK = \left(0,\,1,\,0,\,0\right). \\
    \end{split}
\end{equation}
Obviously, there are no rank 0 singularities outside the poles.
	
Let us study the behavior of the first integrals $H$ and $K$ at the poles. Without loss of generality, take the point $N$ ($r=0$) and introduce new coordinates $x=r\cos\varphi$, $y=r\sin\varphi$ on $M$ in its neighbourhood. In the tangent space at the pole, we take the canonical conjugate momenta $p_x$, $p_y$.
We have $p_\varphi=x p_y-y p_x$, $r p_r = x p_x + y p_y$. Near the point $N$, we have $f(r)=r+o(r)$, $\Lambda(r)=l_0+\frac{l_1}{2}r^2+o(r^2)$, due to the properties 1--5 from Assumption~\ref{assump2.4}. Let's write the first integrals in the new coordinates:
\begin{equation}
\label{eq3.3}
    \begin{split}
	&H=\frac{1}{2}(p_x^2+p_y^2) (1+o(x^2+y^2)), \\
	&K=x p_y-y p_x + l_0+\frac{l_1}{2}(x^2+y^2)+o(x^2+y^2). 
    \end{split}
\end{equation}
On the plane, we have $dx\wedge dy=r dr\wedge d\varphi$ (as the area form), so the following relations are valid:
\begin{equation*}
    \Lambda'(r)dr \wedge d\varphi=\dfrac{\Lambda'(r)}{r}r dr \wedge d\varphi=\dfrac{l_1 r+o(r)}{r} r dr \wedge d\varphi=(l_1+o(1))dx\wedge dy.
\end{equation*}
In the new coordinates, the symplectic structure at any point of the form $(p,N)$ (over the pole $N$) takes the form
\begin{equation*}
    \widetilde{\omega}=\left(
	\begin{matrix}
	   0 & 0 & 1 & 0 \\
	   0 & 0 & 0 & 1 \\
	   -1 & 0 & 0 & l_1 \\
	   0 & -1 & -l_1 & 0 \\
	\end{matrix}
    \right).
\end{equation*}
It follows from~\eqref{eq3.3} that $dH\big|_{(p,N)}
=(p_x,p_y,0,0), \quad dK\big|_{(p,N)}=(0,0,p_y,-p_x)$.
	
A point $(p,N)$ has rank zero if and only if $p=0$. In order to determine the type of this singularity, we compute the Hamiltonian operators $A_H= \widetilde\omega^{-1}d^2 H$, $A_K=\widetilde\omega^{-1} d^2 K$ at the point $(0,N)$ in coordinates $\left(p_x,p_y,x,y\right)$:
\begin{equation*}
    A_H=\left(
        \begin{matrix}
	   0 & l_1 & 0 & 0 \\
	   -l_1 & 0 & 0 & 0 \\
	     1 & 0 & 0 & 0 \\
	   0 & 1 & 0 & 0 \\
	\end{matrix}
    \right), \quad	A_K=\left(
	\begin{matrix}
	   0 & -1 & 0 & 0 \\
	   1 & 0 & 0 & 0 \\
	   0 & 0 & 0 & -1 \\
	   0 & 0 & 1 & 0 \\
	\end{matrix}
    \right).
\end{equation*}
After making sure that the commutative subalgebra generated by them is two-dimensional, we compose the operator $\lambda A_H + \mu A_K$:
\begin{equation*}
\lambda A_H + \mu A_K=\left(
    \begin{matrix}
	0 & -\mu+\lambda l_1 & 0 & 0 \\
	\mu-\lambda l_1 & 0 & 0 & 0 \\
	\lambda & 0 & 0 & -\mu \\
	0 & \lambda & \mu & 0 \\
    \end{matrix}
    \right).
\end{equation*}
Its eigenvalues are $\pm i\mu$, $\pm i (\mu-\lambda l_1)$. Therefore, since $l_1\neq 0$ (which is true, since $\Lambda(r)$ is a Morse function), the singularity is non-degenerate and has the center-center type.

The second pole of the sphere (the point $S$) is investigated by replacing $r\to L-r$.

Let's move on to the singularities of rank 1. Let $f'(r^*)=0$, then $\Lambda'(r^*)\neq 0$, $f(r^*)\neq 0$, thus~\eqref{eq3.2} gives $K=\Lambda(r^*)$, $p_r=0$ (and $H=0$).

If $f'(r)\neq 0$ then in view of~\eqref{eq3.2}, we have
\begin{equation}
\label{eq3.4}
    \dfrac{\partial H}{\partial r}=-\dfrac{f'(r)}{f^3(r)}(K-k(r))\left(K-\Lambda(r)\right).
\end{equation}
The equality $K-k(r)=0$ defines a 1-parameter family (with parameter $r$) of critical points $\mathcal{C}_1^k$. Similarly, the equality $K-\Lambda(r)=0$ yields the family $\mathcal{C}_1^\Lambda$. By substituting the parametrizations of the families $\mathcal{C}_1^k$, $\mathcal{C}_1^\Lambda$ into the expressions of the first integrals $H$, $K$ in the coordinates $(p_r,K,r,\varphi)$, we obtain the curves $\gamma_1$, $\gamma_2$, respectively.

The theorem is proved.
\end{proof}

Let us study the topology of the Liouville foliation near the semi-local singularities of rank 1 found in Theorem~\ref{th3.3}. The following statement easily follows from the definition of non-degenerate singular points of rank 1 for systems with 2 degrees of freedom.

\begin{proposition} \label{propos3.5}
A singular point $x$ of rank 1 of the momentum mapping (and the phase trajectory containing it) is non-degenerate if and only if the matrix
\begin{equation*}
    J=\left(
	\begin{matrix}
	\frac{\partial^2 H}{\partial p_r^2} & \frac{\partial^2 H}{\partial p_r \partial r} \\
	\frac{\partial^2 H}{\partial p_r \partial r} & \frac{\partial^2 H}{\partial r^2} \\ 
	\end{matrix}
    \right)
\end{equation*}
is non-degenerate at the point $x$. If $\det J(x)>0$ then the trajectory containing this point is called elliptic (and it is orbitally stable in the case of $dH(x)\neq 0$. If $\det J(x)<0$ then it is called hyperbolic (and it is orbitally unstable in the case of $dH(x)\neq 0$).
\end{proposition}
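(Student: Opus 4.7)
The plan is to reduce Proposition~\ref{propos3.5} to a Morse-type criterion for the Hamiltonian obtained by symplectic reduction with respect to the $S^1$-action generated by $K$.

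First I would use the coordinates $(p_r,K,r,\varphi)$ from the proof of Theorem~\ref{th3.3}, in which the symplectic structure reads $\widetilde\omega=dp_r\wedge dr+dK\wedge d\varphi$ and $dK=(0,1,0,0)$. Since $dK$ never vanishes, $\mathcal F$ has rank~$1$ at $x$ precisely when $dH(x)$ is proportional to $dK(x)$; because $H$ is $\varphi$-independent, this amounts to $\partial H/\partial p_r(x)=\partial H/\partial r(x)=0$, i.e.\ to the vanishing of the two entries of the gradient of $H$ that enter the matrix $J$.

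Next, $K$ generates a free Hamiltonian $S^1$-action on $\varphi$ in a neighbourhood of $x$ and $H$ is invariant under it, so Marsden--Weinstein reduction at the level $\{K=k\}$ produces a local symplectic $2$-plane with coordinates $(p_r,r)$ and form $dp_r\wedge dr$, on which the reduced Hamiltonian is simply $\widetilde H_k(p_r,r):=H(p_r,k,r)$. According to the standard definition of non-degeneracy of a rank-$1$ singular orbit of an integrable system with two degrees of freedom (see~\cite{IGS}), $x$ is non-degenerate if and only if $\widetilde H_k$ has a Morse critical point at the image of $x$ in the reduced plane. But the Hessian of $\widetilde H_k$ in $(p_r,r)$ is exactly $J(x)$, so the non-degeneracy criterion becomes $\det J(x)\ne 0$.

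Finally, to separate the two types I would linearize the Hamiltonian vector field of $\widetilde H_k$ at its critical point in the reduced plane. Since every $2\times 2$ Hamiltonian matrix is trace-free, its characteristic polynomial is $\mu^{2}+\det J(x)$, hence the eigenvalues are $\pm\sqrt{-\det J(x)}$: purely imaginary (a center, hence an elliptic critical circle) when $\det J(x)>0$, and real of opposite signs (a saddle, hence a hyperbolic critical circle) when $\det J(x)<0$. Under the assumption $dH(x)\ne 0$, the orbit of $x$ lies on a regular isoenergy $3$-manifold as a non-degenerate closed trajectory; the Morse lemma for $\widetilde H_k$ around a local extremum provides nested invariant tori and yields orbital stability in the elliptic case, while the transverse stable and unstable manifolds of a reduced saddle yield orbital instability in the hyperbolic case. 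The main subtlety, in my view, is not computational but conceptual: one must verify that the $S^1$-reduced picture is really the correct incarnation of the general rank-$1$ non-degeneracy definition in this specific system, so that $J$ indeed coincides with the Hessian of the reduced Hamiltonian.
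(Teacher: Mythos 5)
Your proposal is correct and follows essentially the route the paper intends: the paper offers no written proof, stating only that the proposition ``easily follows from the definition of non-degenerate singular points of rank 1,'' and your argument is precisely the standard unwinding of that definition -- since $dK=(0,1,0,0)$ never vanishes, reduction by the $S^1$-action generated by $K$ identifies $J$ with the Hessian of the reduced Hamiltonian on the $(p_r,r)$-plane, and the trace-free linearization $\Omega^{-1}J$ has eigenvalues $\pm\sqrt{-\det J}$, giving the elliptic/hyperbolic dichotomy (consistent with the paper's later computation $\det J=U_K''(r)$ in equation~\eqref{eq3.5}). The one subtlety you flag, that the reduced picture realizes the general rank-1 non-degeneracy condition on $L'/L$ where $L=\mathbb R\cdot\sgrad K$, is immediate here because $K$ is itself a coordinate, so $d^2K=0$ and the restriction of $d^2(H-cK)$ to $\ker dK/L$ is exactly $J$.
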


\begin{remark} \label{rem3.6}
Along with non-degenerate rank 1 singularities of the Liouville foliation mentioned in Proposition \ref{propos3.5}, we will also
meet degenerate rank 1 singularities of the following two types. The first type is a parabolic singularity, in a neighbourhood of which the Liouville foliation is given by the first integrals of the form $2\widetilde H=p_1^2 + q_1^3 - p_2q_1$ and $\widetilde K=p_2$ (in some coordinates $(p_1, p_2, q_1, q_2) \in D^3\times (\mathbb{R}/ 2\pi\mathbb{Z})$, in which the parabolic orbit has the form $p_1=p_2=q_1=0$). The second type is a singularity of the ``elliptical fork'' type, in a neighbourhood of which the Liouville foliation is given by the first integrals of the form $2\widetilde H=p_1^2 + q_1^4 - 2p_2q_1^2$ and $\widetilde K=p_2$ (in coordinates of the same type as for the parabolic singularity). 
\end{remark}
    
\begin{theorem}	\label{th3.7}
Let the functions $f,\Lambda$ satisfy the conditions 1--5 from Assumption~\ref{assump2.4}. Then the types of critical circles of the magnetic geodesic flow $S(f,\Lambda)$ on the sphere are determined by the following conditions: 
\begin{enumerate}
    \item For any $r\in (0,\,L)$ such that $f'(r)\neq 0$, the following conditions {\rm(a)--(d)} are equivalent:

        {\rm(a)} the critical orbit $\mathcal{O}_{r,k(r)}$ from the family $\mathcal{C}_1^k$ is non-degenerate,
		
		{\rm(b)} $U''_{k(r)}(r) \neq 0$, 
		
		{\rm(c)} $k'(r)\Lambda'(r) \neq 0$, 

        {\rm(d)} $(f'(r)\Lambda''(r)-f''(r)\Lambda'(r))\Lambda'(r)\ne0$,

    where $U_k(r)$ is the effective potential~\eqref{eq3.1}, $k(r)$ is the function defining the family $\mathcal{C}_1^k$, $\gamma_1(r)=(h(r),k(r))$ is the bifurcation curve from Theorem~\ref{th3.3}.
    If $U''_{k(r)}(r) > 0$ then the critical circle has an elliptic type, if $U''_{k(r)}(r) <0$ then it is hyperbolic; in this case, $\sgn U''_{k(r)}(r) =\sgn(-k'(r)\Lambda'(r)) =\sgn(f'(r)\Lambda''(r)-f''(r)\Lambda'(r))\Lambda'(r))$.\footnote{The sign of the value $(f'(r)\Lambda''(r)-f''(r)\Lambda'(r))\Lambda'(r)$ coincides with the sign of the Gaussian curvature of the surface obtained by rotating the planar curve $(f,\Lambda)$ around the straight line $\{f=0\}$, evaluated at this point (Fig.~\ref{fig_7}--\ref{fig_9}). The Gaussian curvature of this surface is $-\frac1f\,\frac{d^2f}{ds^2}=\frac{(f'\Lambda''-f''\Lambda')\Lambda'}{({f'}^2+{\Lambda'}^2)^2f}$ where $s=s(r)$ is the natural parameter on the curve $(f,\Lambda)$.\label{footnote1}}
		
    \item Any critical orbit $\mathcal{O}_{r,\Lambda(r)}$ of the system $S(f,\,\Lambda)$ contained in $\mathcal{C}_1^\Lambda\setminus (\mathcal{C}_1^\Lambda\cap\mathcal{C}_1^k)$, is non-degenerate and has an elliptic type.
		
    \item If the functions $f,\Lambda$ also satisfy the condition~6 then degenerate critical circles $\mathcal{O}_{r,k(r)}$ of the family $\mathcal{C}_1^k$ can be of one of the following two types:
    \begin{itemize}
        \item a parabolic circle~\cite{BRF:2000, EG:2012, BGK:2018, Ler:87, Ler:94, KM:21},
        provided that $r\in I$ is such that $f'(r)\Lambda''(r)-f''(r)\Lambda'(r)=0$ (an inflection point of the curve $(f,\Lambda)$); its image under the momentum mapping is an ordinary cusp point
        (having a semi-cubic parabola type)
        of the bifurcation curve $\gamma_1$. The germ of the Liouville foliation at the corresponding degenerate orbit $\mathcal O_{r_\ell^\circ,k(r_\ell^\circ)}$ has the type ``parabolic orbit'';

        \item a critical orbit contained in both families $\mathcal{C}_1^k$ and $\mathcal{C}_1^\Lambda$, provided that $r\in I\cap(0,L)$ is such that $\Lambda'(r)=0$ (a point of the curve $(f,\Lambda)$ with a horizontal tangent); its image under the momentum mapping is the tangency point of the bifurcation curves $\gamma_1$ and $\gamma_2$. The germ of the Liouville foliation at the corresponding degenerate orbit $\mathcal O_{r_j^*,\Lambda(r_j^*)}$ is of the type ``asymmetric elliptic fork''~\cite{BRF:2000, Ku:22, KK:25}.
    \end{itemize}
\end{enumerate}
\end{theorem}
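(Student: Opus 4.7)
The plan is to reduce everything to the effective potential. In the $(p_r, K, r, \varphi)$ coordinates used in the proof of Theorem~\ref{th3.3}, the Hamiltonian reads $H = p_r^2/2 + U_K(r)$, so rank-1 singular points lie in $\{p_r = 0,\ U'_K(r) = 0\}$, and by Proposition~\ref{propos3.5} the Jacobian $J$ at such a point reduces to $\mathrm{diag}(1, U''_K)$. Non-degeneracy is therefore exactly $U''_{k(r)}(r) \neq 0$, and the sign determines elliptic (local minimum of $H$ in $(p_r, r)$) versus hyperbolic (saddle), which gives (a)$\Leftrightarrow$(b) and the type correspondence. Writing $g = K - \Lambda(r)$ and $G = \Lambda' f + g f'$, one finds $U'_K = -gG/f^3$, so $\mathcal{C}_1^k = \{G = 0,\ g \neq 0\}$. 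Differentiating once more on the locus $G = 0$ (where $g = -\Lambda' f/f'$) yields the clean formula $U''_{k(r)}(r) = \Lambda'\Delta/[f(f')^2]$ with $\Delta := f'\Lambda'' - f''\Lambda'$, giving (b)$\Leftrightarrow$(d). Direct differentiation of $k(r) = \Lambda - f\Lambda'/f'$ gives $k'(r) = -f\Delta/(f')^2$, and hence $k'(r)\Lambda'(r) = -f^2 U''_{k(r)}(r)$, producing (b)$\Leftrightarrow$(c) and the full sign chain.

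Part 2 follows by the same template: on $\mathcal{C}_1^\Lambda \setminus \mathcal{C}_1^k$ one has $g = 0$ but $\Lambda' \neq 0$, and direct expansion gives $U''_{\Lambda(r)}(r) = (\Lambda'(r))^2/f^2(r) > 0$, so the orbit is non-degenerate elliptic. For part 3, $U''_{k(r)}(r) = 0$ forces either $\Delta(r) = 0$ (inflection $r_\ell^\circ$) or $\Lambda'(r) = 0$ (critical point $r_j^*$ of $\Lambda$); by condition~6 these loci are disjoint. At $r_0 = r_\ell^\circ$ (with $\Lambda'(r_0) \neq 0$), I compute $U'''_{k_0}(r_0)$ by differentiating once more: using $G(r_0) = G'(r_0) = 0$ and the identity $G''(r_0) = f\Delta'/f'$, this yields $U'''_{k_0}(r_0) = \Lambda'(r_0)\Delta'(r_0)/[f(r_0)(f'(r_0))^2] \neq 0$ by condition~6, so $U_{k_0}$ has an $A_3$ singularity at $r_0$. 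Versality of the $K$-unfolding follows from $\partial_K\partial_r U_K|_{(r_0, k_0)} = \Lambda'(r_0)/f^2(r_0) \neq 0$, and together these place the germ of $(H, K)$ in the parabolic normal form of Remark~\ref{rem3.6}. The semi-cubic cusp of $\gamma_1$ at this point comes from $h'(r) = \Lambda'\Delta/(f')^3$, $k'(r) = -f\Delta/(f')^2$: both vanish at $r_0$ while the second derivatives form a nonzero vector, and the component of $\gamma_1(r) - \gamma_1(r_0)$ in the normal direction vanishes to order $(r - r_0)^2$ while being nonzero at order $(r - r_0)^3$.

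Finally, at $r_0 = r_j^*$ with $\Lambda'(r_0) = 0$ one has $k(r_0) = \Lambda(r_0)$ (so the orbit is in $\mathcal{C}_1^k \cap \mathcal{C}_1^\Lambda$), and $h(r) = (\Lambda')^2/[2(f')^2]$ has a double zero at $r_0$, so $\gamma_1$ is tangent to the vertical line $\gamma_2$ at $(0, \Lambda(r_0))$. For the fiber germ, set $x = r - r_0$, $\kappa = K - \Lambda(r_0)$; the Morse expansion $\Lambda(r) = \Lambda(r_0) + \tfrac{1}{2}\Lambda''(r_0)\,x^2 + O(x^3)$ gives
\[
2 U_K(r) = \tfrac{1}{f^2(r_0)}\bigl(\kappa - \tfrac{1}{2}\Lambda''(r_0)\,x^2\bigr)^2 + O(x^3),
\]
which, after rescaling $x \mapsto q_1$, $\kappa \mapsto p_2$, matches the elliptic-fork form $p_1^2 + q_1^4 - 2 p_2 q_1^2$ from Remark~\ref{rem3.6}, with the $O(x^3)$ tail breaking the $x \to -x$ symmetry and producing the asymmetric variant. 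The main technical obstacle is precisely this last identification: the leading-order calculations above establish the correct universality class and the versality of the unfoldings, but producing an honest local symplectomorphism bringing the germ exactly onto the parabolic (respectively, asymmetric elliptic fork) normal forms of Remark~\ref{rem3.6} requires invoking the symplectic classification from the literature cited there and in part~3 of the theorem.
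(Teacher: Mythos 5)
Your proposal follows essentially the same route as the paper: reduce non-degeneracy to $\det J = U''_K$, derive the identity $U''_{k(r)}(r) = -\Lambda'(r)k'(r)/f^2(r) = \Lambda'(f'\Lambda''-f''\Lambda')/(f\,{f'}^2)$ for the sign chain, compute $U''=\Lambda'^2/f^2$ on $\mathcal C_1^\Lambda$, and for the degenerate circles check the $A_2$/$A_3$ derivative conditions together with versality in $K$ before invoking standard normal forms (the paper uses Whitney's theorem and applies the parametric Morse lemma to $(K-\Lambda(r))/f(r)$ rather than Taylor-expanding $U_K$, which avoids your loose $O(x^3)$ bookkeeping). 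The one assertion you leave unverified --- that the normal component of $\gamma_1(r)-\gamma_1(r_0)$ is genuinely nonzero at third order, i.e., that $\gamma_1''(r_0)$ and $\gamma_1'''(r_0)$ are independent --- is correct (it follows from $h'=-\tfrac{\Lambda'}{ff'}k'$ and $\bigl(\Lambda'/(ff')\bigr)'(r_0)=-\Lambda'(r_0)/f^2(r_0)\neq0$), whereas the paper reads it off from the normal-form parametrization of $\gamma_1$.
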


\begin{remark*}
(a) According to item~1 of Theorem~\ref{th3.7}, the type of critical circles from the family $\mathcal{C}_1^k$ is determined by the sign of the value from condition (d). The points $r_j^*$ and $r_\ell^\circ$ divide the domain of the parameter $r$ into subintervals, at which either $\Lambda'(r_i)=0$ or $f'(r)\Lambda''(r)-f''(r)\Lambda'(r)=0$. Since the sets $\{r_j^*\}$ and $\{r_\ell^\circ\}$ do not intersect, exactly one of the factors in the condition (d) changes the sign when 
the parameter $r$ on the bifurcation curve $\gamma_1$ passes through a value $r_j^*$ or $r_\ell^\circ$, so the type of the corresponding critical circles (elliptic or hyperbolic) changes.

(b) If the magnetic field coincides with the area form (see Example~\ref{ex2.3}~(a)), then elliptic forks do not occur, since $\Lambda'(r)=f(r)>0$ outside the poles, thus the function $\Lambda(r)$ has no critical points.

(c) In the work \cite{KO:20}, elliptic forks did not arise if the triple of functions $f,\Lambda,U$ satisfies some genericity conditions, where $U=U(r)$ is the potential. As one can see from Theorem~\ref {th3.7}, elliptic forks arise in the situation under consideration (in the absence of potential, i.e., for $U\equiv0$). This is because the triples of functions $f,\Lambda,U$ with $U\equiv0$ do not satisfy the genericity conditions from the work~\cite{KO:20}. It can be shown that, by adding a potential $U$, it is possible to destroy these elliptic forks, see Fig.~\ref{fig_1}.
\end{remark*}

\begin{figure}[ht!] 
\begin{minipage}[r]{0.27\linewidth}
    \begin{center} 
    \includegraphics[scale=1.9]{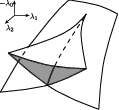} \\
   \textit{a)}
    \end{center}
\end{minipage} 
\begin{minipage}[r]{0.72\linewidth}
    \begin{center}
        \includegraphics[scale=.33]{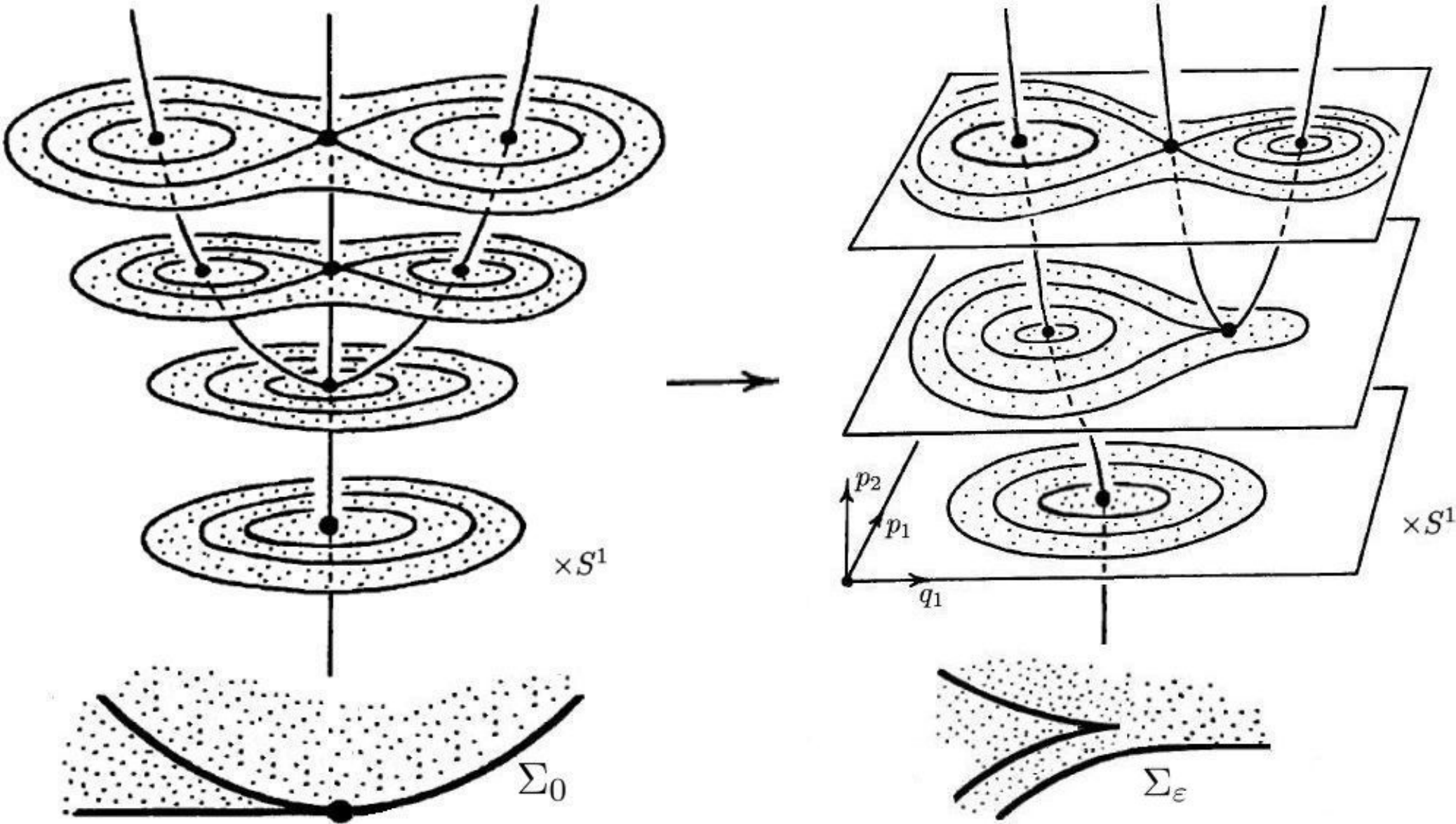} \\ \textit{b)}
    \end{center}
\end{minipage} 
\caption{\textit{a)} Bifurcation diagram $\Sigma^\circ$ in the space $(\lambda_0,\lambda_1,\lambda_2)=(-h,\varepsilon,p_2(K))$
for a singularity of type $A_3$ (swallowtail); 
\textit{b)} appearance of a cuspidal torus in the magnetic geodesic flow
with the potential $\varepsilon U$ near an ``asymmetric elliptic fork'' of the system $S(f,\Lambda)$ with a small $\varepsilon=\lambda_1>0$}
 \label {fig_1}
 \end{figure}

\begin{proof}
Let's prove the first statement of the theorem.
	
Let's write the matrix $J$ for the system under study in terms of the effective potential: 
\begin{align}
\label{eq3.5}
\det J=\left|
    \begin{matrix}
	\frac{\partial^2 H}{\partial p_r^2} & \frac{\partial^2 H}{\partial p_r \partial r} \\
	\frac{\partial^2 H}{\partial p_r \partial r} & \frac{\partial^2 H}{\partial r^2} \\ 
    \end{matrix}
\right|=\left|
    \begin{matrix}
	1 & 0 \\
	0 & \frac{\partial^2 U_k}{\partial r^2} \\ 
    \end{matrix}
\right|=\frac{\partial^2 U_k}{\partial r^2}\Big|_{k=K}.
\end{align}	
The non-degeneracy of critical circles belonging to the family $\mathcal{C}_1^k$ (i.e., located in the pre-image of points of the curve $\gamma_1$) means, in according to~\eqref{eq3.5}, that $U''_{k(r)}(r)\neq 0$. This proves the equivalence of (a) and (b).

Let us compute this expression explicitly using~\eqref{eq3.4}, by substituting $k=k(r)=\Lambda(r)-f(r)\dfrac{\Lambda'(r)}{f'(r)}$ into $U''_{k}(r)$:
\begin{equation} \label{eq3.6}
U''_{k(r)} (r) =-\dfrac{1}{f^{2}(r)}\Lambda'(r)k'(r)=\dfrac{\Lambda'(r)}{f(r)}\left(\dfrac{\Lambda'(r)}{f'(r)}\right)'=\dfrac{\Lambda'(r)}{f(r)f'^2(r)}\left( f'(r)\Lambda''(r)-\Lambda'(r)f''(r) \right).
	\end{equation}
The chain of equalities~\eqref {eq3.6} proves the equivalence of (b)--(d) and the coincidence of the signs of the corresponding quantities.

Let's prove the second statement of the theorem.

The family $\mathcal{C}_1^\Lambda$ is investigated by the same technique that is used for the first family. Computing the function $U''_k(r)$ via~\eqref{eq3.4} and substituting $k=\Lambda(r)$ into this function gives
\begin{equation*}\frac{\partial^2 U_k}{\partial r^2}\big|_{k=\Lambda(r)}=\frac{\Lambda'^2(r)}{f^2(r)}.
\end{equation*}
The corresponding critical circles are of elliptical type, except for those points $r_i$ at which $\Lambda'(r_i)=0$.

Let's prove the third statement of the theorem.

Since a (semi-local) singularity is a local object, we will regard
all functions as functions defined on $\mathbb{R}^3\times S^1(\varphi)$. Let's fix an isoenergy surface $H=h$ in $\mathbb{R}^3(p_r,\,r,\,K)$ and consider the one-dimensional foliation of the plane $(p_r,\,r)$ by level lines $K=k$ of the additional first integral (with various levels $k$). From such a foliation, a two-dimensional foliation is obtained by multiplying with the circle $S^1(\varphi)$.

Let us establish the local form of the first integrals near the corresponding singularities.

Let $r^*$ be a simple zero of the function $f'\Lambda''-f''\Lambda'$, then (as we proved above) $k'(r^*)=0$ and $k''(r^*)\neq 0$. Let's show that $(h^*,\,k^*)=\gamma_1(r^*)=(U_{k(r^*)}(r^*),k(r^*))$ is a cusp point of the bifurcation curve $\gamma_1$. The following equalities are checked by direct calculation:
\begin{align*}
&\frac{\partial U_k(r)}{\partial r} \Bigr|_{k=k(r^*),\,r=r^*}=0,\quad
	\frac{\partial^2 U_k(r)}{\partial r^2} \Bigr|_{k=k(r^*),\,r=r^*}=0,\quad \\
&\frac{\partial^3 U_k(r)}{\partial r^3} \Bigr|_{k=k(r^*),\,r=r^*} = - \frac{k''(r^*) \Lambda'(r^*)} {f^2(r^*)} \neq 0,\quad 
	\frac{\partial^2 U_k(r)}{\partial k\, \partial r} \Bigr|_{k=k(r^*),\,r=r^*} = \dfrac{\Lambda'(r^*)}{f^2(r^*)}\neq 0.
\end{align*}
Due to results of Singularity theory (see, e.g., \cite[sect.~1.5, Whitney’s theorem, or sect.~8.1--8.3]{AVG:85}), in a small neighborhood of a point, there is a regular change of coordinates $p_1=p_r$, $p_2=p_2(K)$, $q_1=q_1(K,\,r)$, such that $p_2(k^*)=q_1(k^*,\,r^*)=0$ and
$2H=p_1^2+q_1^3-p_2 q_1+c(p_2)$, $K=K(p_2)$, where $c=c(p_2)$ is a smooth function, $K=K(p_2)$ is a function inverse to the function $p_2=p_2(K)$.

In the new local coordinates, the equation $U_k'(r)=0$ is rewritten as $3q_1^2-p_2=0$, and the bifurcation curve $\gamma_1$ near this point is parametrized as $(-q_1^3+c(3q_1^2)/2,K(3q_1^2))$. This means that the point $(h^*,\,k^*)$ is an ordinary cusp point (having a semi-cubic parabola type) of the curve $\gamma_1$. In the phase space, it corresponds to a parabolic critical circle $\mathcal O_{r^*, k^*}$ (and the singular integral manifold containing this circle is a ``cuspidal torus'' if there are no other singular circles on it).

To prove the second part of the third statement, take a value $r^*$ such that $0<r^*<L$ and $\Lambda'(r^*)=0$ (on the bifurcation diagram in the $(h,k)$-plane, this is a tangency point $(0,\,\Lambda(r^*))$ of the curve $\gamma_1$ and the axis $Ok$, see e.g.\ the point $\gamma_1(r_1^*)$ in Fig.~\ref{fig_10},~\textit{b}). The following relations are obviously true for $k=\Lambda(r^*)$, $r=r^*$:
\begin{align*}
\frac{\partial }{\partial r}\left(\frac{k-\Lambda(r)}{f(r)}\right)=0, \quad
		\frac{\partial }{\partial k}\left(\frac{k-\Lambda(r)}{f(r)}\right)=\frac{1}{f(r^*)} \ne 0, \\
		\frac{\partial^2 }{\partial r^2}\left(\frac{k-\Lambda(r)}{f(r)}\right)=-\frac{\Lambda''(r^*)}{f(r^*)} \ne 0.
	\end{align*}
In a neighbourhood of this point, due to a parametric version of the Morse lemma, there is a regular change of coordinates $p_1=p_r$, $p_2=p_2(K)$, $q_1=q_1(K,\,r)$ (generally speaking, different from the previous one) such that
 \begin{equation*}
p_2(\Lambda(r^*))=q_1(\Lambda(r^*),\,r^*)=0, \qquad 
    \dfrac{K-\Lambda(r)}{f(r)}=\pm(q_1^2 - p_2),
 \end{equation*}
therefore $2H=p_1^2+(q_1^2-p_2)^2 = p_1^2 + q_1^4 - 2p_2q_1^2 + p_2^{2}$, $K=K(p_2)$, where $K(p_2)$ is the inverse function of the function $p_2(K)$.

Consequently, in a small neighbourhood of this singularity, the Liouville foliation has the structure of an ``elliptic fork'' (see, for example, \cite {BRF:2000}).

The theorem is proved.
\end{proof}

\begin{remark}
Usually the term ``elliptic fork'' from Remark~\ref{rem3.6} is used for singularities in which an involution $(p_1,p_2,q_1,q_2) \mapsto (-p_1,p_2,-q_1,q_2)$ preserving the first integrals of the system is a symplectomorphism. In our case, this is generally not the case, i.e., in this sense, the fork mentioned in Theorem~\ref{th3.7} is ``asymmetric''. 
\end{remark}	
	
\begin{remark}
Since, according to Theorem~\ref{th3.3} points of rank $0$ have the type ``center-center'', it is not difficult to establish the topology of the Liouville foliation in their neighborhood. To do this, we apply the Eliasson--Wey theorem, according to which, a neighborhood of a rank-0 singular point of the center-center type 
is fiber-wise symplectomorphic to the direct product $A\times A$ (here an elliptic singularity of a Hamiltonian system with 1 degree of freedom is denoted by $A$).
\end{remark}

\begin{remark} \label{rem3.11}
Equivalence of the conditions (a)--(c) and description of the type of critical circle $\mathcal O_{r,k(r)}$ in terms of the signs of the quantities $U_k''(r)$ and $-k'(r)\Lambda'(r)$ from the first item of Theorem~\ref {th3.7}, as well as the second item of Theorem~\ref {th3.7} follow from \cite[Proposition~2, items~(A) and (B)(c)]{KO:20}. The criterion of parabolicity of the critical circle $\mathcal O_{r,k(r)}$ in terms of the effective potential (respectively, the function $k(r)$) follows from \cite[Proposition~2~(A)]{KO:20} (respectively \cite[Proposition~2~(B)(c,e)]{KO:20}).
\end{remark}

\begin{corollary} \label {cor3.12}
The additional first integral $K\big|_{\{H=h\}}$ is a Bott function for values $h>0$ different from the critical values of the function $H$ and from the abscissa of the cusp points of the curve $\gamma_1$. 
\end{corollary}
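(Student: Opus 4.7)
The plan is to combine the classification of singularities from Theorems~\ref{th3.3} and~\ref{th3.7} with the observation that non-degeneracy of a rank-$1$ singular orbit of $\mathcal F$ translates directly into the Bott condition for $K$ restricted to a regular isoenergy surface.

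First, since the only critical value of $H$ is $0$ (the critical set of $H$ consists of the two poles together with the family $\mathcal C_1^\Lambda$, all mapping to $H=0$), the assumption $h>0$ makes $Q_h^3=\{H=h\}$ a smooth $3$-manifold. A point $x\in Q_h^3$ is critical for $K|_{Q_h^3}$ precisely when $dK(x)\parallel dH(x)$, i.e., when $x$ is a rank-$\le 1$ singular point of $\mathcal F$. Rank-$0$ points and the family $\mathcal C_1^\Lambda$ both lie in $\{H=0\}$, so for $h>0$ the critical set of $K|_{Q_h^3}$ consists precisely of those circles $\mathcal O_{r,k(r)}\subset\mathcal C_1^k$ for which $h(r)=h$. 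Since $h(r)=\tfrac{1}{2}(\Lambda'(r)/f'(r))^2$ is real-analytic in $r$ away from the critical points of $f$, only finitely many such circles appear on any regular level.

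Next, I would check non-degeneracy of each such circle. By Theorem~\ref{th3.7}(1), $\mathcal O_{r,k(r)}$ fails to be non-degenerate iff $(f'\Lambda''-f''\Lambda')(r)\,\Lambda'(r)=0$. The condition $h(r)>0$ forces $\Lambda'(r)\ne 0$, so the only obstruction is $f'\Lambda''-f''\Lambda'=0$, i.e., $r=r_\ell^\circ$ for some $\ell$. By Theorem~\ref{th3.7}(3) the images of these values under $\mathcal F$ are exactly the cusp points of $\gamma_1$, so excluding $h$ from the set of their abscissae guarantees that every critical circle of $K|_{Q_h^3}$ is non-degenerate in the sense of Proposition~\ref{propos3.5}.

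Finally, I would translate this non-degeneracy into the Bott property of $K|_{Q_h^3}$. Near $\mathcal O_{r,k(r)}$ one has $\partial H/\partial K=(K-\Lambda)/f^2=-\Lambda'(r)/(f(r)f'(r))\ne 0$, so by the implicit function theorem $H=h$ can be solved locally as $K=K^*(p_r,r)$, with $(p_r,r,\varphi)$ serving as coordinates on $Q_h^3$. Differentiating the identity $H(p_r,K^*(p_r,r),r,\varphi)\equiv h$ twice at the critical point yields
\begin{equation*}
\mathrm{Hess}_{(p_r,r)}(K^*)=-\frac{1}{\partial H/\partial K}\,J,
\end{equation*}
where $J$ is the $2\times 2$ matrix from Proposition~\ref{propos3.5}. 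The direction $\partial/\partial\varphi$ tangent to the critical circle lies in the kernel of this Hessian, while the transverse block is non-degenerate iff $\det J\ne 0$. Thus non-degeneracy of the orbit is equivalent to the Bott condition for $K|_{Q_h^3}$ along $\mathcal O_{r,k(r)}$, which closes the argument. The only nontrivial ingredient is this Hessian identity, and it is the step I would expect to require the most care to write out cleanly; everything else is read off from Theorems~\ref{th3.3} and~\ref{th3.7}.
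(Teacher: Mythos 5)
Your argument is correct and follows essentially the same route as the paper's proof: reduce the Bott property of $K\big|_{Q^3_h}$ to non-degeneracy of the rank-$1$ critical circles lying in $Q^3_h$, then use Theorem~\ref{th3.7} to see that degeneracy of $\mathcal O_{r,k(r)}$ requires $(f'\Lambda''-f''\Lambda')(r)\,\Lambda'(r)=0$, where $\Lambda'(r)=0$ is excluded by $h>0$ and $f'\Lambda''-f''\Lambda'=0$ is excluded because it yields exactly the cusp points of $\gamma_1$; the paper merely asserts the first reduction as ``easy to show,'' whereas you supply it explicitly via the implicit-function-theorem Hessian identity (essentially the computation appearing in Remark~\ref{rem4.6}). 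One minor slip: $f$ and $\Lambda$ are only $C^\infty$, so $h(r)$ need not be real-analytic — but finiteness of the critical circles on a level $h$ follows anyway from their non-degeneracy together with compactness of $Q^3_h$ (or from the genericity conditions, which make $h(r)$ piecewise strictly monotone).
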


\begin{proof}
It is easy to show that the additional first integral $K\big|_{\{H=h\}}$ is Bott on a non-singular isoenergy manifold $Q^3_h=\{H=h\}$ if and only if all singular points of rank 1 of the momentum map contained in $Q^3_h$ are non-degenerate (i.e., elliptic or hyperbolic). In according to the first and second statements of Theorem~\ref{th3.7}, all critical circles in $Q^3_h$ are non-degenerate if and only if $(f'(r)\Lambda''(r)-f''(r)\Lambda'(r))\Lambda'(r)\ne0$ at all orbits $\mathcal O_{r,k(r)}\subset Q^3_h$. The latter inequality is satisfied, since according to the third statement of Theorem~\ref {th3.7}, the condition $f'\Lambda''-f''\Lambda'=0$ describes the cusp points of the curve $\gamma_1$, and $\Lambda'(r)=0$ is prohibited by the condition $h>0$.
\end{proof}

\subsection{Semi-global 4D singularities}

In this section, we will assume that the pair of functions $f,\Lambda$ satisfies some additional conditions, so-called ``genericity in a strong sense''.

Denote $I_{hyp}=\{r\in[0,L]\setminus\{r_i\}\mid(f'(r)\Lambda''(r)-f''(r)\Lambda'(r)) \Lambda'(r)<0\}$ (see footnote$^{\ref{footnote1}}$). This is a disjoint union of intervals with endpoints $r_i,r_j^*,r_\ell^\circ$. According to item~1 of Theorem~\ref{th3.7} (or item~2 of Theorem~\ref{th5.13}), these intervals correspond to 1-parameter families of hyperbolic critical circles $\mathcal O_{r,k(r)}$, $r\in I_{hyp}$, and the endpoints of the intervals other than $r_i$ are degenerate critical circles $\mathcal O_{r_j^*,k(r_j^*)}$, $\mathcal O_{r_\ell^\circ,k(r_\ell^\circ)}$.

Let $\Gamma_-$ be the curve obtained from the curve $\Gamma=(f,\Lambda)$ by reflection with respect to the line $\{f=0\}$, i.e., $\Gamma_-(r)=(-f(-r),\Lambda(-r))$, $r\in[-L,0]$.

\begin{assumption} \label {assump3.12}
Let us assume that, in addition to the conditions 1--6 from Assumption~\ref{assump2.4}, the pair of functions $\Gamma=(f,\Lambda)$ satisfies the following conditions:
\begin{itemize}
\item[7.] all self-intersection points of the curve $\Gamma|_{I_{hyp}}$ are transversal;
\item[8.] any straight line tangent to the curve $\Gamma|_{I_{hyp}}\cup\Gamma_-|_{-I_{hyp}}$ at two points is not tangent to it at any third point;
\item [9.] the tangent line to the curve $\Gamma$ at an inflection point $\Gamma(r_{\ell_1}^\circ)$ is not tangent to the curve $\Gamma|_{I_{hyp}\cup\{r_\ell^\circ\}}\cup\Gamma_-|_{-I_{hyp}\cup\{-r_\ell^\circ\}}$ at any other point.
\end{itemize}
We will say that the pair of functions $f,\Lambda$ is {\em  generic in a strong sense} if all the conditions 1--9 are satisfied.
\end{assumption}

Let us describe the topology of the Liouville foliation in a neighbourhood of singular fibers $\{H=h_*,\, K=k_*\}$ (i.e., {\em semi-global singularities} of the Liouville foliation).

According to Theorem~\ref {th3.7}, any inflection point of the curve $(f,\Lambda)$ corresponds to a rank-1 critical circle of parabolic type. The fiber containing it is a cuspidal torus, see Fig.~\ref {fig_2}, \textit{a}. When the energy level $h$ passes through $h_*$, a pair of 3-atoms of types $A$ and $B$ is born or destroyed in the Fomenko molecule of the Liouville foliation on $Q^3_h$ (Definition~\ref{def4.1}). 

\begin{figure}[ht!] 
    \begin{minipage}[r]{0.49\linewidth}
		\begin{center}
            \includegraphics[scale=.3]{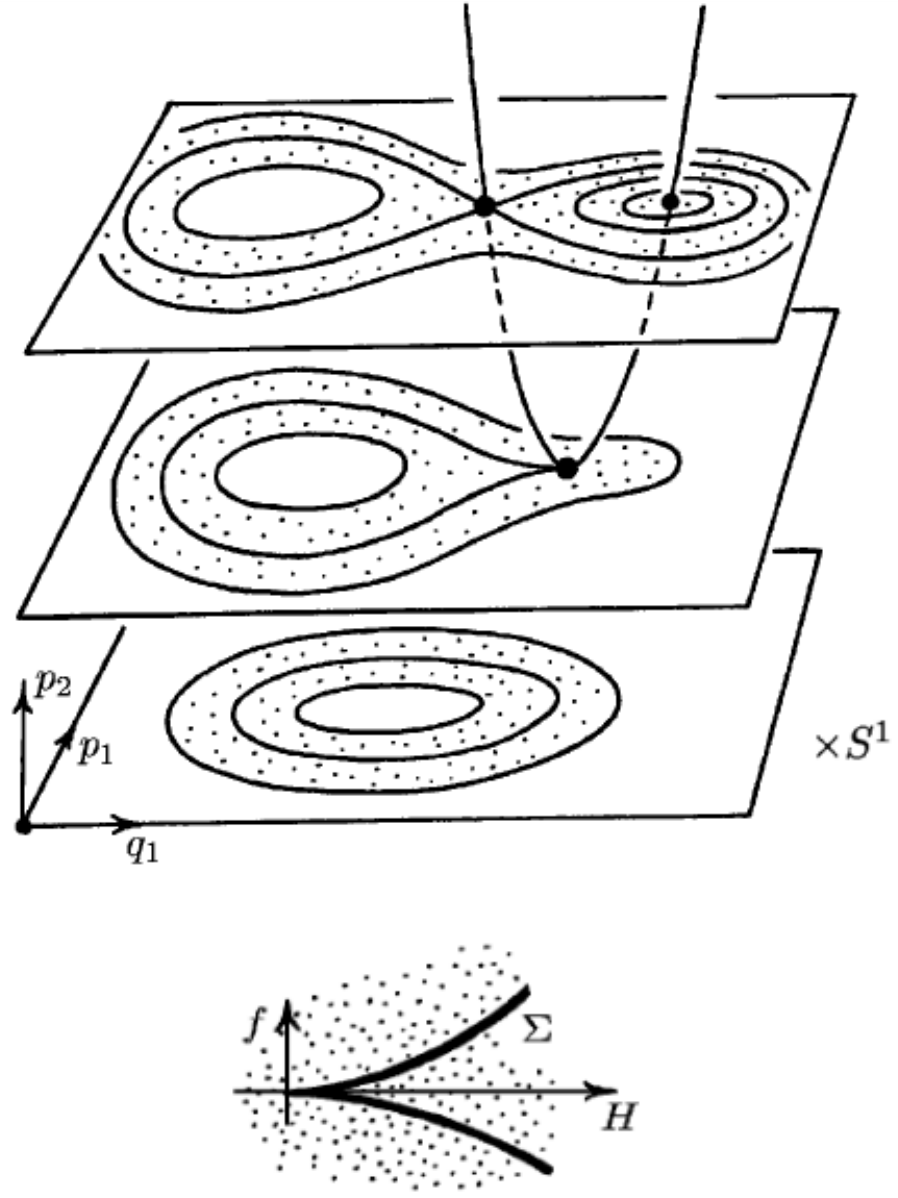} \\ \textit{a)}
		\end{center}
    \end{minipage} 
    \begin{minipage}[r]{0.49\linewidth}
		\begin{center}
            \includegraphics[scale=.28]{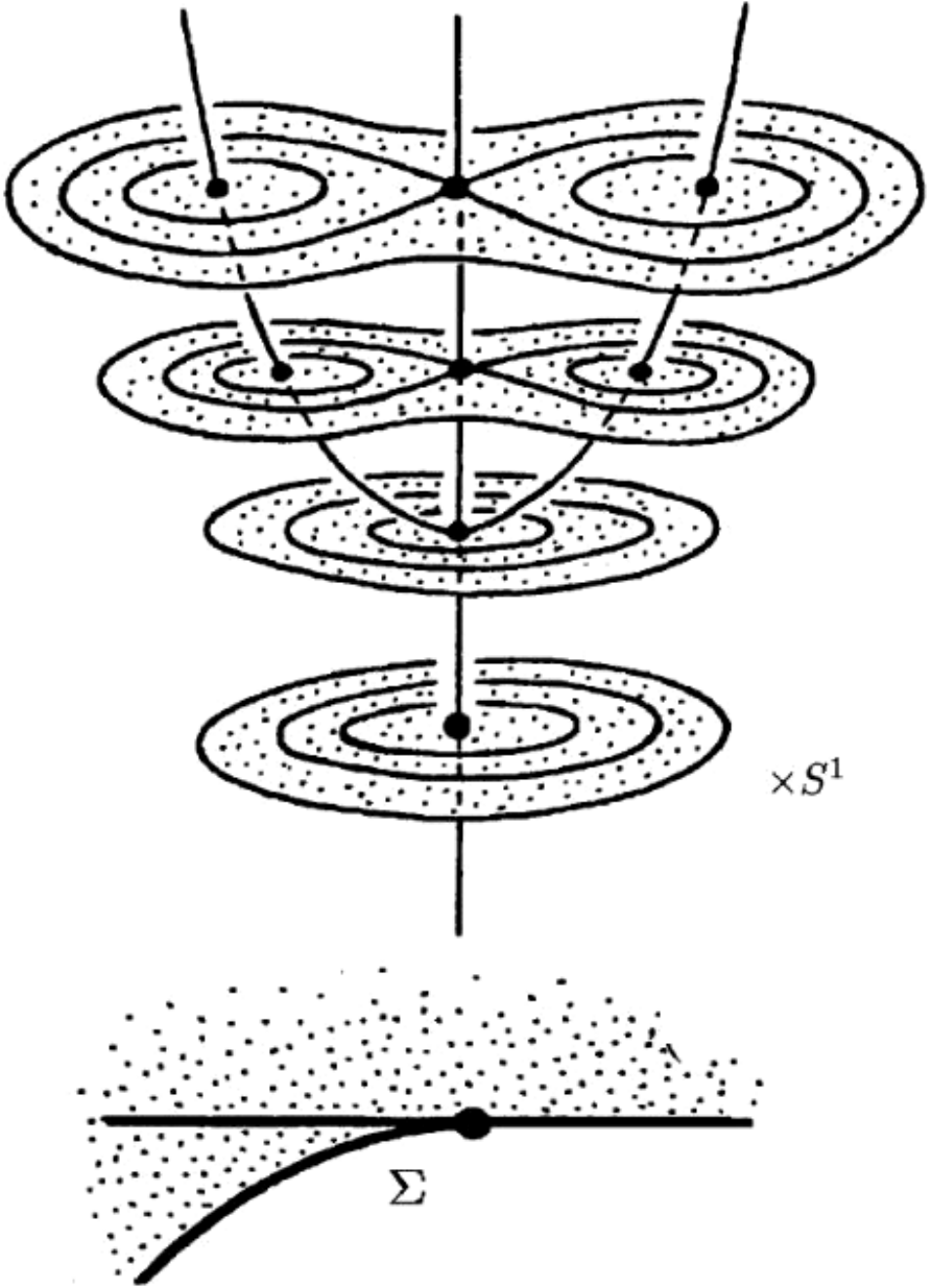} \\ \textit{b)}
        \end{center}
    \end{minipage} 
    \caption{Liouville foliation near degenerate singular fibers of the magnetic geodesic flow $S(f,\Lambda)$:
    \textit{a)} near a cuspidal torus, \textit{b)} near an elliptic fork}
    \label {fig_2}
\end{figure}

According to Theorem~\ref {th3.7}, any point of the curve $(f,\Lambda)$ with a horizontal tangent corresponds to a rank-1 critical circle having the singularity type ``asymmetric elliptic fork'', see Fig.~\ref {fig_2}, \textit{b}.

Since we want to study singular fibers, rather than just singular orbits, we need (unlike Theorem~\ref{th3.7}) to look not only at the inflection points of the curve $\Gamma=(f,\Lambda)$ and its points with a horizontal tangent, but also at pairs of points with a common tangent (which correspond to self-intersection points $(h_*,k_*)$ of the bifurcation curve $\gamma_1$), see Fig.~\ref {fig_3} (\textit{a}, \textit{c}, \textit{e}), where the curve $\gamma=(a,k)=(\pm\sqrt{2h},k)$ is obtained from the bifurcation curve $\gamma_1=(h,k)$ by some choice of sign (see \eqref{eq5.3} below).
Such pairs of points correspond to a ``splitting hyperbolic singularity of rank 1 and complexity 2'' (see~\cite[rem.~4]{KO:20}). 
As the energy level $h$ passes through $h_*$, the topology of the Liouville foliation on the isoenergy manifold $Q^3_h$ changes\footnote{This property means that the Liouville foliation is topologically unstable near such fibers~\cite{IGS}.} (near a singular fiber $k=k_*$ in $Q^3_{h_*}$) as shown in Fig.~\ref{fig_3} (\textit{b}, \textit{d}, \textit{f}), with the appearance of the 3-atom $D_1\approx V_{++}\times S^1\approx V_{--}\times S^1$ or $D_2 \approx V_{+-}\times S^1\approx V_{-+}\times S^1$ in $Q^3_{h_*}$.

\begin{figure}[ht!]
    \begin{minipage}[r]{0.57\linewidth}
		\centering
        \includegraphics[scale=1.1]{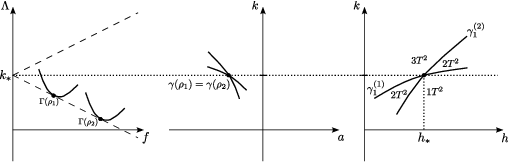} \\ \textit{a)}
    \end{minipage} 
    \hfill
    \begin{minipage}[r]{0.43\linewidth}
		\centering
        \includegraphics[scale=1.35]{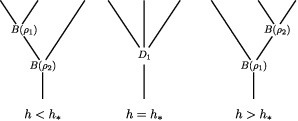} \\ \textit{b)}
    \end{minipage}
    \vfill
    \begin{minipage}[r]{0.57\linewidth}
		\centering
        \includegraphics[scale=1.1]{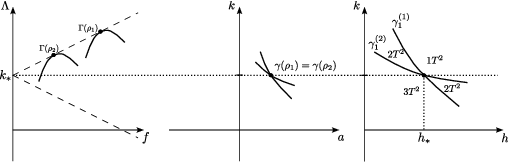} \\ \textit{c)}
    \end{minipage} 
    \hfill
    \begin{minipage}[r]{0.43\linewidth}
		\centering
        \includegraphics[scale=1.35]{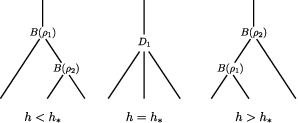}\\ \textit{d)}
    \end{minipage}
    \vfill
    \begin{minipage}[r]{0.57\linewidth}
		\centering
        \includegraphics[scale=1.1]{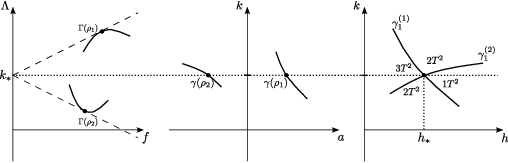} \\ \textit{e)}
    \end{minipage} 
    \hfill
    \begin{minipage}[r]{0.43\linewidth}
	\centering
        \includegraphics[scale=1.35]{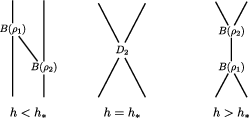} \\ \textit{f)}
    \end{minipage}
    \caption{The curves $\Gamma$, $\gamma$ and $\gamma_1$ near the saddle 3-atom $V_{\sigma_1\sigma_2}\times S^1$ (\textit{a}, \textit{c}, \textit{e}) and the corresponding bifurcation 
    of the Fomenko molecule (\textit{b}, \textit{d}, \textit{f}), where $(\sigma_1,\sigma_2)=(+,+)$, $(-,-)$ and $(-,+)$ respectively, with notation $\gamma^{(1)}_1=\gamma_1\big|_{(\rho_1-\varepsilon,\rho_1+\varepsilon)}$, $\gamma^{(2)}_1=\gamma_1\big|_{(\rho_2-\varepsilon,\rho_2+\varepsilon)}$ }
    \label {fig_3}
\end{figure}

\section{Topology of the Liouville foliation on 3D isoenergy manifolds of the system $S(f,\lambda)$}

By abusing notations, in this section (except for Remark~\ref{rem4.6}), the symbols $h$ and $k$ will denote some fixed constants that are the values of the first integrals $H$ and $K$ (in the previous sections, the notations $h$, $k$ were used for a pair of functions defining the bifurcation curve $\gamma_1 =(h,k)$ from Theorem~\ref{th3.3}).

\subsection{Invariants of rough Liouville equivalence}

Let's define an important concept used below.

\begin{definition} \label {def4.1}
Let $W$ be the Reeb graph of the function $K\big|_{Q^3_h}$. The {\em Fomenko molecule} of the Liouville foliation on $Q^3_h$ (and of the function $K\big|_{Q^3_h}$) is the graph $W$, each vertex of which is assigned with a 3-atom together with the corresponding bijection between the set of its boundary tori and the set of all edges incident to this vertex.
\end{definition}

It is well known that the Fomenko molecule is an invariant of the Liouville equivalence of a system on the isoenergy manifold\footnote{Let's say that two Liouville foliations on $Q^3_h$ and $\tilde Q^3_{\tilde h}$ have the same topology (and the corresponding systems on $Q^3_h$ and $\tilde Q^3_{\tilde h}$ are {\em Liouville equivalent}) if there is an orientation-preserving diffeomorphism $\Phi: Q^3_h\to \tilde Q^3_{\tilde h}$ that transforms fibers of the first foliation into fibers of the second foliation and preserves the orientation of critical circles. The orientation on the isoenergy manifold is determined by the formula~\eqref{eq4.2}.\label{footnote3}}.
This invariant is incomplete. It is sometimes called {\em rough}, in order to distinguish it from the complete invariant of the Liouville equivalence of the system on $Q^3_h$ called the Fomenko--Zieschang {\em marked molecule}.

\begin{theorem} \label{th4.2}
For a magnetic geodesic flow $S(f,\Lambda)$ on the sphere, and for an energy level $h\in\mathbb R$:
\begin{enumerate}
\item The isoenergy manifold $Q^3_h=\{H=h\}\subset T^*M$ is regular if and only if $h\neq 0$. It is diffeomorphic to $\mathbb{R}P^3$ for any $h>0$.
\item If the pair of functions $f,\Lambda$ satisfies the conditions 1--6 from Assumption~\ref {assump2.4} and the energy level $h>0$ is different from the abscissa of the cusp points of the curve $\gamma_1$ and of the intersection points of this curve with the lines $\{k=\Lambda(0)\}$ and $\{k=\Lambda(L)\}$, then $K\big|_{Q^3=h}$ is a Bott function and the Fomenko molecule of this function is a tree with 3-atoms $A$ at the terminal vertices and saddle 3-atoms $V=V_{\sigma_1\dots\sigma_m} \times S^1$ (described in Remark~\ref{rem4.3}, see below) at other vertices. 
On the bifurcation diagram, such a saddle atom $V_{\sigma_1 \dots \sigma_m} \times S^1$ corresponds to a point through which $m$ arcs of the curve $\gamma_1$ pass, of the form $\gamma_1 (\rho_1)=\dots=\gamma_1 (\rho_m)$, with $0<\rho_1 <\dots<\rho_m <L$, and the cotangents of the inclination angles of the vectors $\gamma_1'(\rho_i)$ are equal to $\frac{\partial H}{\partial K}\big|_{\mathcal O_{\rho_i, k(\rho_i)}}$ and have the signs $\sigma_i$, $i=1,\dots,m$. In particular, in the case of $\sigma_i =+1$, the orientation of the critical circle $\mathcal O_{\rho_i, k(\rho_i)}$ by the phase flow of the system coincides with the orientation given by the Hamiltonian $S^1$ action generated by the function $K$, and in the case of $\sigma_i = -1$ is the opposite of it.
\end{enumerate}
\end{theorem}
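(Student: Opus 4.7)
For \textbf{Part 1}, I would observe that $H=\tfrac12 g^{ij}(q)p_ip_j$ is a positive-definite quadratic form on each cotangent fibre, so $dH$ vanishes exactly on the zero section (where $H=0$). Therefore $Q^3_h$ is a smooth regular hypersurface iff $h\neq0$, and for $h>0$ the bundle projection $\pi:T^*M\to M$ restricts to $Q^3_h$ as a smooth sphere bundle over $S^2$ (the $\sqrt{2h}$-cosphere bundle). This is classically diffeomorphic to the unit cotangent bundle of $S^2$, i.e.\ to $\mathbb RP^3\cong SO(3)$.

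For \textbf{Part 2}, the key reduction is that the Hamiltonian $S^1$-action generated by $K$ acts \emph{freely} on $Q^3_h$. Its fixed points on $T^*M$ are the zeros of $X_K$, i.e.\ of $dK$; a direct computation at the poles (as in the proof of Theorem~\ref{th3.3}) shows that these are exactly the rank-$0$ points $(0,N)$, $(0,S)$, which lie at $H=0\neq h$. Consequently $Q^3_h/S^1$ is a closed orientable surface; by Part~1 and the classification of free $S^1$-actions on $\mathbb RP^3$ (the Euler class must be $\pm 2$), this surface is $S^2$, and $K\big|_{Q^3_h}$ descends to a function $\widetilde K:S^2\to\mathbb R$. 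By Corollary~\ref{cor3.12} the chosen exclusions on $h$ guarantee that $K\big|_{Q^3_h}$ is Bott, whence $\widetilde K$ is Morse on $S^2$ (its critical points are the images of the non-degenerate elliptic/hyperbolic critical circles of Theorem~\ref{th3.7}(1)). Since the Reeb graph of a Morse function on the simply connected $S^2$ is a tree, and the Reeb graph of $\widetilde K$ coincides with that of $K\big|_{Q^3_h}$ (connected components of level sets on $Q^3_h$ are $S^1$-bundles over connected components on $S^2$), the Fomenko molecule $W$ is a tree.

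The description of the 3-atoms is then immediate from the $S^1$-equivariant product structure: a saturated invariant neighbourhood of a critical level $\{K=k_*\}\subset Q^3_h$ is equivariantly diffeomorphic to $U\times S^1$, where $U\subset S^2$ is a neighbourhood of the corresponding critical level of $\widetilde K$. At a terminal vertex of $W$, $\widetilde K$ has a single non-degenerate extremum, so the 2-atom is $A$. At an interior vertex, the 2-atom is the classical saddle $V_{\sigma_1\dots\sigma_m}$, with $m$ equal to the number of hyperbolic critical circles on the level, which by Theorem~\ref{th3.3} coincides with the number of parameters $\rho_1<\dots<\rho_m$ satisfying $\gamma_1(\rho_i)=(h,k_*)$. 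To identify the signs $\sigma_i$, I would work in the coordinates $(p_r,K,r,\varphi)$ in which $\widetilde\omega=dp_r\wedge dr+dK\wedge d\varphi$: on any critical circle $\mathcal O_{\rho_i,k(\rho_i)}$ one has $p_r=0$ and $\partial H/\partial r=\partial H/\partial\varphi=0$, so
\[
X_H=\frac{\partial H}{\partial K}\,\frac{\partial}{\partial\varphi}=\frac{\partial H}{\partial K}\,X_K .
\]
Thus the phase-flow orientation agrees with the $X_K$-orientation according to $\operatorname{sgn}(\partial H/\partial K)$. A short calculation using $k(r)=\Lambda-f\Lambda'/f'$ and $h(r)=\tfrac12(\Lambda'/f')^2$ gives $\partial H/\partial K\big|_{\mathcal O_{\rho,k(\rho)}}=-\Lambda'(\rho)/(f(\rho)f'(\rho))=h'(\rho)/k'(\rho)$, which is exactly the cotangent of the inclination angle of $\gamma_1'(\rho)$ in the $(h,k)$-plane.

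The step I anticipate being the main obstacle is verifying rigorously that no ``non-product'' or degenerate 3-atoms appear among the admitted levels: one must confirm that (i) the cusp-abscissa exclusion rules out parabolic critical circles (Theorem~\ref{th3.7}(3)), (ii) the condition $h>0$ rules out elliptic forks, which all project to the axis $h=0$, and (iii) the exclusion $k_*\notin\{\Lambda(0),\Lambda(L)\}$ ensures that the regular $S^1$-orbit $T^*_N M\cap Q^3_h$ (resp.\ $T^*_S M\cap Q^3_h$), which sits at $K=\Lambda(0)$ (resp.\ $\Lambda(L)$), does not coincide in $K$-level with any critical circle coming from $\gamma_1$; only then is the saddle atom truly of the product form $V_{\sigma_1\dots\sigma_m}\times S^1$.
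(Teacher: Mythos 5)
Your Part 1 and your argument that the molecule is a tree are both sound. For Part 1 you use essentially the paper's route (the $\sqrt{2h}$-cosphere bundle of $S^2$ is $STS^2\cong\mathbb{R}P^3$). For the tree property the paper argues more directly: a cycle in the Reeb graph would give a surjection $\pi_1(Q^3_h)\to\mathbb Z$, impossible since $\pi_1(\mathbb{R}P^3)\cong\mathbb Z_2$; your detour through the free $S^1$-action, the classification of free circle actions on $\mathbb{R}P^3$ and the Morse function $\widetilde K$ on the quotient $S^2$ is correct but heavier than needed. Your computation $X_H=\frac{\partial H}{\partial K}X_K$ on a critical circle and the identity $\frac{\partial H}{\partial K}\big|_{\mathcal O_{\rho,k(\rho)}}=-\Lambda'(\rho)/(f(\rho)f'(\rho))=h'(\rho)/k'(\rho)$ agree with the paper's Remark~\ref{rem4.6}~(a).

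The genuine gap is in the sentence ``at an interior vertex, the 2-atom is the classical saddle $V_{\sigma_1\dots\sigma_m}$'': this is precisely the nontrivial content of the theorem and you assert it rather than prove it. A saddle level of a Morse function on $S^2$ with $m$ saddle points need not be a chain of $m+1$ circles (already for $m=2$ there are several distinct 2-atoms), and nothing in your quotient argument pins down the combinatorics of the singular fiber or identifies the cross signs $\sigma_i$ of Remark~\ref{rem4.3} with $\sgn\bigl(h'(\rho_i)/k'(\rho_i)\bigr)$. The paper closes exactly this hole in Lemma~\ref{le4.5}: it realizes a cross-section of $Q^3_h$ over $M\setminus\{N,S\}$ as the union of the graphs $K=k_\pm(p_r,r)=\Lambda(r)\pm\sqrt{2h-p_r^2}\,f(r)$ over the region $\Pi_h=\{g_-(r)\le k\le g_+(r)\}$, observes that for each $r$ the admissible $k$ form an interval (so the singular level projects to the $r$-axis as a chain ordered by $\rho_1<\dots<\rho_m$), classifies the critical points of $k_\pm$ by the Hessian (Table~\ref{table1}), and reads off $\sigma_i=+1$ at local minima of $g_+$ and $\sigma_i=-1$ at local maxima of $g_-$, which is then matched to the outer normal of $\Pi_h$, i.e.\ to $\sgn\bigl(h'(\rho_i)/k'(\rho_i)\bigr)$. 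Without some version of this explicit level-set analysis your proof establishes only that the non-terminal atoms are \emph{some} product saddle atoms, not that they are the chain atoms $V_{\sigma_1\dots\sigma_m}\times S^1$ with the stated signs. Your closing points (i)--(iii) about excluded levels are correct in spirit and correspond to Corollary~\ref{cor3.12} and to the paper's restriction of the cross-section construction away from $K\in\{\Lambda(0),\Lambda(L)\}$.
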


\begin{remark} \label{rem4.3} 
(a) The saddle 3-atom $V=V_{\sigma_1 \dots \sigma_m}\times S^1$ is a direct product of the corresponding 2-atom $V_{\sigma_1\dots \sigma_m}$ (introduced in \cite{KO:20}) by the circle $\varphi$. The singular fiber of this 2-atom is a chain of $m+1$ circles
(i.e., a graph of the form $\subset\!>\!\!\!\bullet\!\!\!<\!>\!\!\!\bullet\!\!\!<\dots>\!\!\!\bullet\!\!\!<\!>\!\!\!\bullet\!\!\!<\!\supset$). Let us give a more precise description of this 2-atom \cite[Lemma~3] {KO:20}:
\begin{itemize}
\item We will represent the 2-atom as a neighborhood of a critical level of a Morse function on a surface.
Consider a surface embedded in $\mathbb R^3 (p_r,r,K)$ with the function $K$ on it (a height function). We will assume that it is a Morse function. Consider a saddle critical level $K=K_{crit}$ of the function $K$ on this  surface (this is a planar graph of valence 4, the projection of the surface to the plane $(p_r, r)$ is a local diffeomorphism near any of its vertices). By coloring the areas $K<K_{crit}$ and $K>K_{crit}$ in black and white, respectively, we obtain a checkerboard coloring of the surface near the graph $K=K_{crit}$.
\item 
It is known that there are exactly two ways of chess colouring in a small neighbourhood of a vertex of degree 4 of a planar graph (Fig.~\ref{fig_4},~\textit{a}; they are transformed into each other by permutation of colours). We will call such a local phase portrait a {\em cross}. To distinguish these two types from each other, we will mark them with the sign $\sigma=\pm1$.
\item
Next, let's take $m$ crosses marked with the signs $\sigma_1, \dots, \sigma_m$, and form from them a pre-image of a neighborhood of the critical value of a function with $m$ Morse singularities, so that 
neighboring crosses are glued together, and the remaining free ends of the first and last crosses are glued to each other (if the signs assigned to two neighboring crosses are different, then one should make a twist when gluing their corresponding ends; see Fig.~\ref{fig_4}, \textit{b}).
Examples of 2-atoms obtained as a result of such gluing are shown in Fig.~\ref{fig_5}.
Next, in the notation of atoms $V_{\sigma_1 \dots \sigma_m}$ for brevity, we will write $V_{\pm\dots\pm}$ instead of $V_{\pm1\dots\pm 1}$.
\end{itemize}

(b) It is obvious that the 2-atoms $V_{\sigma_1\dots\sigma_m}$ and $V_{\sigma_m\dots\sigma_1}$ are topologically the same in the following sense: Liouville foliations on 3-atoms $V_{\sigma_1 \dots \sigma_m} \times S^1$ and $V_{\sigma_m \dots\sigma_1} \times S^1$ have the same topology (see footnote$^{\ref{footnote3}}$). It is also obvious that if all signs in the notation of a 2-atom $V_{\sigma_1\dots\sigma_m}$ are replaced with opposite ones, then topologically it will be the same atom, but with the opposite direction of the additional first integral. Changing the orientation on the 2-atom $V_{\sigma_1\dots\sigma_m}$ also does not change the topology of the Liouville foliation on the corresponding 3-atom (this is obvious if one looks at the fiberwise diffeomorphism $(p_r,K,r,\varphi) \to (-p_r,K,r,-\varphi)$ that reverses orientation of 2-atoms).

(c) If the pair of functions $f,\Lambda$ satisfies the additional condition of ``genericity in a strong sense'' (see condition 8 in Assumption~\ref{assump3.12}),
then only the following types of saddle 3-atoms can appear in the Fomenko molecule of the Bott function $K|_{Q^3_h}$ for the magnetic geodesic flow $S(f,\Lambda)$ on the sphere:
$V_\sigma\times S^1$ and $V_{\sigma_1\sigma_2}\times S^1$, $\sigma,\sigma_1,\sigma_2 \in\{+1,-1\}$ (topologically arranged as 3-atoms $B$, $D_1$ and $D_2$, see Fig.~\ref {fig_5}).
\end{remark}

Let's introduce a definition that will be actively used in proving Theorem~\ref{th4.2}.
\begin{definition}
A {\em domain of possible motion} $R_{h,k}$ is the projection of the integral manifold corresponding to the values $H=h=\const$, $K=k=\const$ into the configuration manifold $M$.
\end{definition}

\begin{remark}\label{rem:Delta} 
This definition implies that, under the conditions of the problem, the domain of possible motion on the sphere $M=S^2$ is given as follows:
\begin{equation*}
	R_{h,k}=\left\{(r,\varphi): U_k(r) \leqslant h \right\}.
	\end{equation*}
These conditions can be written as
	\begin{equation*}
	U_k(r) \leqslant h \quad \iff \quad \Lambda(r) - \sqrt{2h}f(r) \leqslant k \leqslant \Lambda(r) + \sqrt{2h}f(r).
	\end{equation*}
By introducing the functions $g_ -(r)=\Lambda(r) - \sqrt{2h}f(r)$, $g_+(r)=\Lambda(r) +\sqrt{2h}f(r)$, by definition we get that the domain of possible motion has the form 
	\begin{equation*}
		R_{h,k}=\left\{(r,\varphi): g_-(r)\leqslant k \leqslant g_+(r) \right\}.
	\end{equation*}
The projection of this set to the $r$ coordinate is the disjoint union of segments $\Delta_i\subset\left[0, L\right]$.
Any such a segment $\Delta_i$ corresponds to the following subset of the sphere $M=S^2$:
\begin{itemize}
\item a disk if either $0\in\Delta_i$ and $L\not\in\Delta_i$, or $L\in\Delta_i$ and $0\not\in\Delta_i$ (this is possible only in the case of $k\in\{\Lambda(0),\ \Lambda(L)\}$, since $g_-(0)=g_+(0)=\Lambda(0)$ and $g_-(L)=g_+(L)=\Lambda(L)$);
\item an annulus if $0\notin\Delta_i$, $L\notin\Delta_i$ (this is true for any $k\in\mathbb R\setminus\{\Lambda(0),\Lambda(L)\})$;
\item the whole $S^2$ if $\Delta_i =\left[0,\,L\right]$ (this is true if and only if
$k=\Lambda(0)=\Lambda(L)$, $h\geqslant\sup\limits_{0<r<L}U_k (r)$).
	\end{itemize}
On a planar curve $\Gamma(r)=(f(r),\Lambda(r))$, such segments $\Delta_i$ correspond to the pieces of intersection of this curve with the sector $\{(f,\Lambda): |\Lambda-k|\leqslant\sqrt{2H}f\}$.
\end{remark}

\begin{figure}[ht!]
    \centering
    \begin{minipage}{0.49\linewidth}
        \centering
	  \includegraphics[scale=1,angle=0]{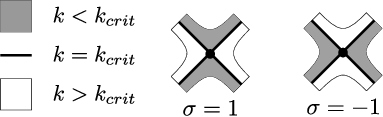}  \\ \textit{a)}
    \end{minipage}
    \hfill
    \begin{minipage}{0.49\linewidth}
        \centering
        \includegraphics[scale=.8,angle=0]{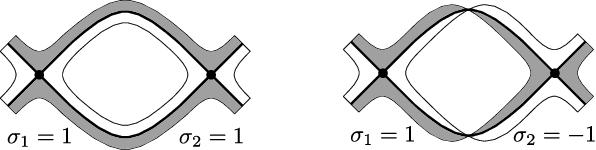} \\ \textit{b)}
    \end{minipage}
    \caption{\centering \textit{a)} Definition of a marked cross, \textit{b)} the rules for gluing crosses}
    \label{fig_4}
\end{figure}

\begin{figure}[ht!]
    \centering
    \begin{minipage}{0.49\linewidth}
        \centering
		\includegraphics[scale=1,angle=0]{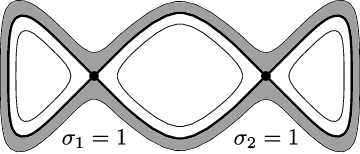} \\ \textit{a)}
    \end{minipage}
    \hfill
    \begin{minipage}{0.49\linewidth}
        \centering
		\includegraphics[scale=1,angle=0]{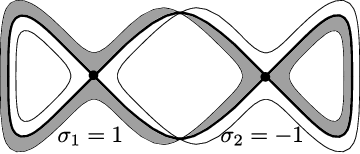} \\ \textit{b)}
    \end{minipage}
    \caption{\centering Examples of atoms glued from crosses: \textit{a)} $V_{++}=D_1$, \textit{b)} $V_{+-}=D_2$ }
    \label{fig_5}
\end{figure}

In order to prove Theorem \ref {th4.2}, let's pose the question: how, using the given functions $f$, $\Lambda$, to construct the Fomenko molecule for the Liouville foliation given by the first integral $K$ on $Q^3_h$, for a given energy level $h$?

\begin{lemma} \label {le4.5} 
Suppose that $h>0$ is a non-singular energy level and $K|_{Q^3_h}$ is a Bott function. Then the functions $g_+(r)$ and $g_-(r)$ on $[0,L]$ are Morse functions. Let $k\in\mathbb R\setminus\{\Lambda(0),\Lambda(L)\}$. Then:

\begin{enumerate}
\item if $k$ is a local maximum of the function $g_+(r)$ or a local minimum of the function $g_-(r)$, then the corresponding extremum points are in one-to-one correspondence with the 3-atoms $A$ at the vertices of the Fomenko molecule;
\item if $k$ is a local minimum of the function $g_+(r)$ and/or a local maximum of the function $g_-(r)$, then the corresponding extremum points are in one-to-one correspondence with the critical circles of a saddle 3-atom $V_{\sigma_1\dots\sigma_m} \times S^1$ of the Fomenko molecule. In this case, $\sigma_1,\dots, \sigma_m$ is a set of signs corresponding to the points of local extremes $\rho_1,\dots,\rho_m$ of the functions $g_\pm(r)$ with the extreme value $k$, ordered in ascending order, furthermore $\sigma_i=+$ if $\rho_i$ is a point of a local minimum of the function $g_+(r)$, and $\sigma_i=-$ if $\rho_i$ is a point of a local maximum of the function $g_-(r)$.
\end{enumerate}
\end{lemma}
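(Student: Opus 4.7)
The plan is to reduce the problem to Morse theory of a scalar function on the 2-dimensional reduced surface $\Sigma_h := Q^3_h / S^1_\varphi$. The Hamiltonian vector field of $K$ with respect to $\widetilde\omega$ is easily computed to be $-\partial/\partial\varphi$, so the $S^1$-action it generates is rotation in $\varphi$; its fixed points are the rank-$0$ center-center points of $\mathcal F$, which lie in $\{H=0\}$ and are therefore absent from $Q^3_h$ for $h>0$. Hence this $S^1$-action is free on $Q^3_h$, the quotient $\Sigma_h$ is a smooth closed surface, $K$ descends to a smooth function $\bar K$ on $\Sigma_h$, and each 3-atom of $K|_{Q^3_h}$ is the product of the corresponding 2-atom of $\bar K$ by the $S^1$-fibre. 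So it suffices to carry out Morse theory of $\bar K$ on $\Sigma_h$.

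Next I would describe $(\Sigma_h, \bar K)$ explicitly. Off the poles, using $(p_r, r, K)$ as coordinates, $\Sigma_h$ is cut out by $p_r^2/2 + (K-\Lambda(r))^2/(2f^2(r)) = h$ and consists of two sheets $K = \Lambda(r)\pm f(r)\sqrt{2h - p_r^2}$ joined along the equator $\{p_r^2 = 2h\}$. Near the poles, Cartesian coordinates show that the pole-points are regular points of $\bar K$: from $K \approx \Lambda(0) + (xp_y - yp_x) + O(r^2)$ in the notation of \eqref{eq3.3}, $\bar K$ is locally a smooth linear function of the $S^1$-invariant coordinate $p_\varphi$. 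Consequently the critical points of $\bar K$ all sit on the circle $\{p_r = 0\}$ of one of the sheets and, parameterizing that sheet by $(p_r, r)$, they coincide with critical points $\rho \in (0, L)$ of $g_+$ (upper sheet, critical value $k = g_+(\rho)$) or of $g_-$ (lower sheet, $k = g_-(\rho)$). A direct computation gives
\[
\left.\mathrm{Hess}(\bar K)\right|_{(p_r, r) = (0, \rho)} = \mathrm{diag}\bigl(\mp f(\rho)/\sqrt{2h},\ g_\pm''(\rho)\bigr).
\]
At such a critical point one also has $g_+''(\rho) = (f'\Lambda'' - f''\Lambda')(\rho)/f'(\rho)$, obtained using $\sqrt{2h} = -\Lambda'(\rho)/f'(\rho)$, and analogously for $g_-$; the hypothesis that $K|_{Q^3_h}$ is Bott together with Corollary~\ref{cor3.12} then excludes $f'\Lambda'' - f''\Lambda' = 0$ at level $h$, so $g_\pm$ are Morse functions on $[0, L]$.

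For item 1, the Hessian formula shows that a local maximum of $g_+$ or a local minimum of $g_-$ gives $\bar K$ a local extremum on $\Sigma_h$, producing an isolated critical-level component and thus a 2-atom $A$ (so the 3-atom on $Q^3_h$ is $A \times S^1 = A$). For item 2, a local minimum of $g_+$ or a local maximum of $g_-$ gives $\bar K$ a Morse saddle. In local $(p_r, r)$ coordinates the quadratic model reads
\[
\bar K - k \approx -\frac{f(\rho)}{2\sqrt{2h}}p_r^2 + \frac{g_+''(\rho)}{2}(r - \rho)^2 \qquad (g_+\text{-minimum}),
\]
whose $\{\bar K > k\}$-sectors open along the $r$-direction, and at a $g_-$-maximum
\[
\bar K - k \approx \frac{f(\rho)}{2\sqrt{2h}}p_r^2 + \frac{g_-''(\rho)}{2}(r - \rho)^2,
\]
whose $\{\bar K > k\}$-sectors open along the $p_r$-direction; comparing with the marked-cross convention of Remark~\ref{rem4.3} and Fig.~\ref{fig_4}\textit{a} identifies these with opposite signs $\sigma = +$ and $\sigma = -$, respectively.

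The main obstacle is the gluing step: showing that the $m$ hyperbolic critical points with the common value $k$, enumerated as $\rho_1 < \cdots < \rho_m$, assemble into a single chain 2-atom $V_{\sigma_1\ldots\sigma_m}$ in that very order. I would handle this by describing the connected component of the critical level $\{\bar K = k\}\cap \Sigma_h$ globally as the lift to $\Sigma_h$ of the energy curve $\{p_r^2/2 + U_k(r) = h\}$ of the one-degree-of-freedom effective system; its singular points are precisely the local maxima of $U_k$ at height $h$, and these correspond bijectively to the hyperbolic $\rho_i$'s via the identifications of item 2. Tracking which sheet of $\Sigma_h$ each arc of this curve belongs to (the curve switches sheets exactly where $\Lambda(r) = k$) and applying the chess-colouring/twist rule of Fig.~\ref{fig_4}\textit{b} then identifies the resulting 2-atom as $V_{\sigma_1\ldots\sigma_m}$, whence the corresponding 3-atom is $V_{\sigma_1\ldots\sigma_m}\times S^1$.
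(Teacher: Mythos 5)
Your proposal is correct and follows essentially the same route as the paper: the paper also reduces to a two-dimensional surface (it uses the cross-section $\{\varphi=\mathrm{const}\}$ of the trivial $S^1$-bundle rather than the quotient $Q^3_h/S^1$, which is the same thing here), finds the same critical points $p_r=0$, $g_\pm'(\rho)=0$, computes the same Hessian $\mathrm{diag}(\mp f(\rho)/\sqrt{2h},\,g_\pm''(\rho))$, and assembles the saddles into $V_{\sigma_1\dots\sigma_m}$ by the same cross-gluing along the critical level. Your extra touches (deriving the Morse property of $g_\pm$ from Bottness via Corollary~\ref{cor3.12}, and checking regularity of $\bar K$ at the poles) are consistent with, and slightly more explicit than, the paper's argument.
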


\begin{proof}
The statements of the lemma follow from \cite[Lemmas~2 and~3]{KO:20}. Let's give its proof for completeness.

As noted earlier, the domain of possible motion can be defined as $R_{h,k}=\left\{(r,\varphi): g_-(r)\leqslant k\leqslant g_+(r)\right\}$. By fixing a non-singular energy level $h>0$ and allowing the number $k$ to run through all possible real values, we obtain a flat region 
\begin{equation} \label{eq4.1} 
 \Pi_h=\{(k,r)\in\mathbb R\times(0,L) : g_-(r) \leqslant k\leqslant g_+(r)\}.
\end{equation}
Obviously, the connected components of the sets $\Pi_h\cap\{k=\const\}=\{k\}\times R_{h,k}$ and the fibers of the Liouville foliation of the system are in one-to-one correspondence, therefore the Reeb graph of the function $K$ (on a nonsingular $Q^3_h$) coincides with the Reeb graph of the function $k$ restricted to the flat domain $\Pi_h$, i.e., it can be constructed as follows: declare each connected component of the set $\Pi_h\cap\{k=\const\}$ as a separate point and introduce the quotient topology on the resulting space.

Let us explicitly construct the Liouville foliation of the system on $Q^3_h$ with the exception of the fibers $Q^3_h\cap\{K=\Lambda(0)\}$ and $Q^3_h\cap\{K=\Lambda(L)\}$.

The isoenergy manifold $Q^3_h\cap T^*(M\setminus\{N,S\})$ is given by the equation $\dfrac{p_r^2}{2}+\dfrac{(K-\Lambda(r))^2}{2f^2(r)}=h$ in the phase space $T^*(M\setminus\{N,S\})$ with coordinates $(p_r,K,r,\varphi)$. Since the coordinate $\varphi$ is not explicitly included in this equation, $Q^3_h$ is a direct product of a 2-dimensional surface (given by the same equation) in the 3-dimensional space $\mathbb R^2\times(0,L)$ by a circle. Let's take $\varphi=\const$ as a cross-section, and choose $p_r$ and $r$ as local coordinates on it. Such a cross-section $\{K=k_\pm(p_r,r),\ \varphi=\const\}$ can also be visualized as the union of ellipses (defined by the same equation with a fixed value $r$) in the planes $\{r=\const\}$ with coordinates $p_r,K$. 
We get a presentation of the isoenergy manifold $Q^3_h\cap T^*(M\setminus\{N,S\})$ in the form $K=k_\pm(p_r,r)$, i.e., as a direct product of a circle and the union of the graphs of the following two functions:
\begin{equation*}
   k_+(p_r,r)=\Lambda(r) + \sqrt{2h-p_r^2}\,f(r), \quad k_-(p_r,r)=\Lambda(r) - \sqrt{2h-p_r^2}\,f(r).
\end{equation*}
Fibers of the Seifert foliation have the form $p_r=\const$, $K=\const$, $r=\const$, $\varphi\in\mathbb{R}/2\pi\mathbb{Z}$, thus the Liouville foliation is constructed as the result of multiplication of a one-dimensional foliation (on the cross-section $\Sigma=\{K=k_\pm(p_r,r),\ \varphi=\const\}$, see Fig.~\ref {fig_6}) with the circle $\varphi$.

\begin{figure}[ht!] 
    \centering
    \includegraphics[scale=2]{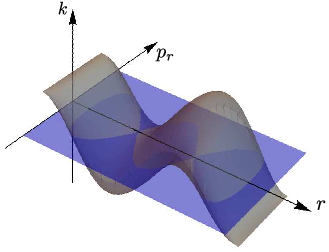}
    \caption{Example of a cross-section surface $\Sigma\subset Q^3_h\cap\{\varphi=\varphi_0\}\subset\mathbb R^2\times(0,L)\times\{\varphi_0\}$ and a saddle level of the Morse function $K|_{\Sigma}$ corresponding to the 2-atom $V_{+-}=D_2$}
    \label{fig_6}
\end{figure}

We need to study bifurcations of level surfaces of the Bott function $K$ on $Q^3_h\setminus(\{K=\Lambda(0)\}\cup\{K=\Lambda(L)\})$.
Since $Q^3_h\setminus(\{K=\Lambda(0)\}\cup\{K=\Lambda(L)\})\subseteq Q^3_h \cap T^*(M\setminus\{N,S\})$, it suffices to study bifurcations of level lines of the Morse function $K$ on the cross-section $\{K = k_\pm(p_r,r),\ \varphi=\const \}\setminus(\{K=\Lambda(0)\}\cup\{K=\Lambda(L)\})$. A level line of the function $K$ in this cross-section can be either a point or a circle or a chain of circles. It is obvious that the topology of level lines changes at critical points of the functions $k_\pm(p_r,r)$ and only at them:
\begin{equation*}
\begin{cases}
\dfrac{\partial k_\pm(p_r,r)}{\partial p_r}=0, \\
\dfrac{\partial k_\pm(p_r,r)}{\partial r}=0,
\end{cases}
\iff \quad 
\begin{cases}
p_r=0, \\
\Lambda'(r)\pm\sqrt{2h}f'(r)=0.
\end{cases}.
\end{equation*}
To determine the types of critical points, we calculate the Hesse matrix at them:
\begin{equation*}
d^2 k_\pm=
\begin{pmatrix}
\mp\dfrac{f(r)}{\sqrt{2h}} & 0 \\
0 & \Lambda''(r)\pm\sqrt{2h}f''(r) \\
\end{pmatrix}.
\end{equation*}

\begin{table}[htbp]
 \caption{Calculation of types and signs of critical circles}
 \center{
 \begin{tabular}{|c|c|c|c|c|}
 \hline
 № &\specialcell{Type of critical \\ points $r$} & \specialcell{ Type of critical \\ points $(p_r=0,r)$} & \specialcell{Type of critical \\circles $\mathcal O_{r,k(r)}$} & \specialcell{Sign of the critical \\ circle $\mathcal O_{r,k(r)}$} \\
\hline
 1 & local max of $g_+(r)$ & local max of $k_+(p_r,r)$ & elliptic & $\sigma=1$\\
\hline
 2 & local min of $g_+(r)$ & saddle of $k_+(p_r,r)$ & hyperbolic& $\sigma=1$\\
 \hline
 3 & local max of $g_-(r)$ & saddle of $k_-(p_r,r)$ & hyperbolic & $\sigma=-1$\\
 \hline
 4 & local min of $g_-(r)$ & local min of $k_-(p_r,r)$ & elliptic & $\sigma=-1$ \\
			\hline
		\end{tabular}
	}
    \label{table1}
\end{table}
The results are shown in Table~\ref{table1}.

Singularities 2 and 3 in Table~\ref {table1} are, in fact, crosses in the sense of the definition given in Remark~\ref {rem4.3}. Constructing a picture of level lines of the Morse function $K$ on the surface $\{K=k_\pm(p_r,r),\ \varphi=\const\}$ is equivalent to gluing crosses described in that definition, i.e., also leads to a 2-atom $V_{\sigma_1\dots \sigma_m}$. Similarly, singularities 1 and 4 correspond to a 2-atom $A$.

The lemma is proved.
\end{proof}

\begin{remark}[to Table~\ref {table1} and Figures~\ref{fig_4} and~\ref{fig_5}] \label{rem4.6}
(a) Figures~\ref{fig_4} and~\ref{fig_5} show the projection to the plane $(p_r,r)$ of the picture of level lines $\{K\big|_\Sigma=k\}$ of the Morse function $K$ on the surface $\Sigma=\{K=k_\pm(p_r,r),\ \varphi=\const\} \subseteq\{H=h_*,\ \varphi=\const\}$ near the critical value $k_*=k(\rho_1)=\dots=k(\rho_m)$.
Here and everywhere else in this remark, $h(r)$ and $k(r)$
denote the functions from Theorem~\ref{th3.3}, 
while $h$, $h_*$, $k$ and $k_*$ 
denote constants that are values of the functions $H$ and $K$. The sign $\sigma_i=\pm1$ corresponding to the critical circle $\mathcal O_{\rho_i,k(\rho_i)}$ of the given 3-atom $V=V_{\sigma_1\dots\sigma_m}\times S^1\subset Q^3_{h_*}$ coincides with the direction of the external normal (along the $Ok$ axis) to the flat region $\Pi_h$ at the point $(k_*,\rho_i)$.
Therefore, this sign coincides with the sign of the value $h'(\rho_i)/k'(\rho_i)$.
Thus, the vectors $\sgrad H$ and $\sgrad K$ at the critical circle $\mathcal O_{\rho_i,k(\rho_i)}$ are proportional to each other with a proportionality coefficient having the sign $\sigma_i$. See~\cite[Lemma~3]{KO:20}. Let's show this explicitly: since $k_*=\Lambda(\rho_i) + \sigma_i \sqrt{2h_*} f(\rho_i)$, where $\sigma_i=\pm1$, then $\sigma_i =\frac{k_*-\Lambda(\rho_i)}{f(\rho_i)\sqrt{2h_*}}
= - \frac{\Lambda'(\rho_i)}{f'(\rho_i)\sqrt{2h_*}}$, whereas the proportionality coefficient in view of~\eqref {eq3.2} equals $h'(\rho_i)/k'(\rho_i) = \frac{k_*-\Lambda(\rho_i)}{f^2(\rho_i)} = 
-\frac{\Lambda'(\rho_i)}{f(\rho_i)f'(\rho_i)}$, so it has the same sign.

(b) Let us prove the relation $U_{k(\rho_i)}''(\rho_i) = -\sigma_i\dfrac{\sqrt{2h}}{f(\rho_i)}g''_{\sigma_i}(\rho_i)$ showing a connection between the type of the critical point of the function $g_{\pm}(r)$, the type of the critical circle $\mathcal O_{r,k(r)}$ and the sign of this circle, see Table~\ref{table1}. Due to the relations $g_{\sigma_i}(r)=\Lambda(r) + \sigma_i \sqrt{2h} f(r)$ and $\sigma_i=-\frac{\Lambda'(r)}{f'(r)\sqrt{2h}}$, we have $g_{\sigma_i}''(\rho_i)=\Lambda''(\rho_i)-\dfrac{\Lambda'(\rho_i)}{f'(\rho_i)}f''(\rho_i)$, and from~\eqref {eq3.6} we obtain the required relation between the second derivatives.

(c) One also easily verifies that the Hesse matrices $d^2(H|_{\{K=k_*\}})$ and $d^2(K|_{\{H=h_*\}})$ at the given critical point are proportional with a proportionality coefficient having the sign $-\sigma_i$:
since $dH=\lambda dK$ at this point, the Hesse matrix $d^2(H-\lambda K)$ is a well-defined quadratic form, therefore its restriction to the tangent plane to the surface $\{K=k_*\}$ (coinciding with the tangent plane to the surface $\{H=h_*\}$, due to the proportionality of $dH$ and $dK$) is equal to $d^2(H|_{\{K=k_*\}})=-\lambda d^2(K|_{\{H=h_*\}})$.
\end{remark}
\begin{proof}[Proof of Theorem~\ref{th4.2}]
Let's prove the first part of the theorem. By Definition~\ref {def2.2} (see~\eqref{eq2.3}) of the magnetic geodesic flow, the energy level $Q^3_h$ does not depend on the magnetic field, i.e., the isoenergy 3-manifold $Q^3_h$ is the same as that of the geodesic flow on the given Riemannian manifold $(M,\,ds^2)$. From the general properties of geodetic flows, we obtain that:
	\begin{itemize}
\item $Q^3_h$ is non-singular if and only if $h\neq 0$,
\item for any $h>0$, the isoenergy 3-manifold $Q^3_h$ is diffeomorphic to the set of unit tangent vectors to $M$, i.e., to the spherical tangent bundle $STM$ of the manifold $M$.
 \end{itemize}
In the case of $M = S^2$, we get $Q^3_h \cong STS^2\cong SO(3) \cong\mathbb{R}P^3$.

Let us show that the Reeb graph of the function $K$ on a non-singular $Q^3_h$ is a tree. Suppose the contrary, then there is a cycle in it. 
This means that there is a normal subgroup in $\pi_1(Q^3_h)$, whose quotient group is isomorphic to $\mathbb{Z}$. But this is impossible, since $\pi_1(\mathbb{R}P^3)\cong \mathbb{Z}_2$.
 
The remaining statements of the theorem follow from Corollary~\ref {cor3.12}, Lemma~\ref {le4.5} and Remark~\ref {rem4.6}~(a).
\end{proof}

\begin{remark} \label {rem4.8}
In the proof of Lemma~\ref{le4.5}, we obtained an answer to the question formulated before it. That answer is as follows. In order to find the Fomenko molecule for the Liouville foliation on a non-singular isoenergy manifold $Q^3_h$ of the system $S(f,\Lambda)$, it is enough to know the value $h$, the functions $f$ and $\Lambda$ and to perform the following constructions (this also follows from~\cite[sect.~4.1]{KO:20}):
    \begin{itemize}
\item construct the functions $g_+(r)$, $g_-(r)$ and the Reeb graph of the function $k$ restricted to the flat region $\Pi_h$ defined in~\eqref{eq4.1};
\item assign 3-atoms to the vertices of this graph in accordance with the types and signs of the critical circles (ordered in ascending order of coordinates $r$), determined from Table~\ref{table1}.
    \end{itemize}
\end{remark}

\subsection{Calculation of the Fomenko--Zieschang isoenergy marks (global 3D topological invariants)}

As noted above, the Liouville foliation of the system consists of regular fibers, which are two-dimensional tori, and singular fibers (whose small neighborhoods are called 3-atoms). Let's fix a (non-singular) energy value $h>0$ different from the abscissa of cusp points of the curve $\gamma_1$, and construct the Fomenko molecule (see Remark~\ref{rem4.8}) of the function $K \big|_{Q^3_h}$. According to Theorem~\ref{th4.2}, there are 3-atoms $A$ at its terminal vertices, saddle 3-atoms $V=V_{\sigma_1\dots\sigma_m}\times S^1$ at its non-terminal vertices, and the edges correspond to smooth 1-parameter families of tori that ``glue'' boundary tori of the atoms. In order to fully restore the topology of the Liouville foliation, one has to know how does this gluing work.

Let us first define orientations on the isoenergy manifold $Q^3_h$, as well as on boundary tori and on critical circles of 3-atoms on it.
A basis $e_1,e_2,e_3\in T_xQ^3_h$, where $x\in Q^3_h$, is called {\em positively oriented in $Q^3_h$} if 
\begin{equation} \label{eq4.2} 
\widetilde\omega \wedge \widetilde\omega (e_1,e_2,e_3,\grad H)>0.
\end{equation}
Let $X\subset Q^3_h$ be a 3-atom, $x\in\partial X$ and $n\in T_xQ^3_h$ be a vector of the external normal to $\partial X$ in $Q^3_h$. A basis $e_1,e_2\in T_x(\partial X)$ is called {\em positively oriented in $\partial X$} if the triple $(e_1,e_2,n)$ is positively oriented in $Q^3_h$, i.e.,
\begin{equation*}
 \widetilde\omega \wedge \widetilde\omega (e_1,e_2,n,\grad H)>0.
\end{equation*}
Let's define the orientation on critical circles of the Bott function $K\big|_{Q^3_h}$ via the flow of $\sgrad H=\widetilde{\omega}^{-1}dH$ on elliptical circles, and the flow of $\sgrad K=\widetilde{\omega}^{-1}dK$ on hyperbolic ones.

In order to describe the gluing of tori mentioned above, we choose an {\em admissible basis} on each boundary torus of the 3-atom, which is a pair of cycles $(\lambda, \mu)$, according to the following rules.
\begin{enumerate}
\item If the 3-atom is elliptic (i.e., diffeomorphic to a full torus), then a cycle $\lambda$ on its boundary torus is selected that can be contracted to a point inside the full torus (a so-called vanishing cycle). In this case, any cycle that complements $\lambda$ to a basis on the boundary torus can be used as the cycle $\mu$. The orientation of $\mu$ is chosen to be consistent with the flow of $\sgrad H$ on the critical circle, and the orientation of $\lambda$ is chosen so that the pair $(\lambda, \mu)$ is positively oriented on the boundary torus.
	
\item If the 3-atom is saddle and its separatrix diagram is orientable (i.e., the 3-atom is a trivial $S^1$-bundle over a surface $P$ with a given Morse function on it), then the cycle $\lambda$ is chosen to be homotopic to a fiber of the Seifert fibration on this atom, and its orientation is given by the flow of $\sgrad K$. Let's fix a cross-section $P$ and choose as $\mu_i$ the intersections of $P$ with the boundary tori of the 3-atom. One can choose $\mu_i$ independently on each torus, but the algebraic sum of all cycles $\mu_i$ must not contain $\lambda$, i.e., $\mu_i$ are related by the condition of the existence of a cross-section of the 3-atom passing through them. The orientation of each cycle $\mu_i$ is chosen so that the pair $(\lambda, \mu_i)$ is positively oriented on the corresponding boundary torus of the 3-atom.	
\end{enumerate}
After choosing an admissible basis on each boundary torus, one obtains a gluing matrix for each pair of tori of the molecule connected by an edge. Using these matrices, one computes marks for this edge.

Let's formulate a theorem about these marks.

\begin{theorem} \label{th4.8} 
If the pair of functions $f,\Lambda$ determining the magnetic geodesic flow $S(f,\Lambda)$ on the sphere satisfies the conditions 1--6 from Assumption~\ref{assump2.4} and the value $h>0$ is different from the abscissa of the cusp points of the curve $\gamma_1$ and of the intersection points of this curve with the lines $\{k=\Lambda(0)\}$ and $\{k=\Lambda(L)\}$, then $K\big|_{Q^3_h}$ is a Bott function, and the following holds:
\begin{enumerate}		
\item the Fomenko molecule of the Bott function $K\big|_{Q^3_h}$ is a tree with 3-atoms $A$ at the terminal vertices and saddle 3-atoms $V=V_{\sigma_1\dots\sigma_k} \times S^1$ at the other vertices;

\item the topological Fomenko--Zieschang invariant of the Liouville foliation on the isoenergy manifold $Q^3_h$ (with the orientation of hyperbolic critical circles given by the flow of $\sgrad K=\widetilde\omega^{-1}dK$) is given by the Fomenko molecule with the following marks:
\begin{itemize}
\item $r=0$, $\varepsilon=\pm1$ on the edges of the form $A-V$, where the sign ``$-$'' is taken in the case when the 3-atom $A$ corresponds to a local minimum of the function $K\big|_{Q^3_h}$, and the sign ``$+$'' is taken in the case when the 3-atom $A$ corresponds to a local maximum of the function $K\big|_{Q^3_h}$;

\item $r=\infty$, $\varepsilon=1$ on the edges of the form $V-V'$;
      
\item if the Fomenko molecule has the form $A-A$, then the marks on the single edge are $r=1/2$, $\varepsilon=1$;

\item all saddle atoms form a single family with the mark $n=2$.
  \end{itemize}
   \end{enumerate}
\end{theorem}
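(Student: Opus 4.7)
The plan is to compute admissible bases on each boundary torus from the natural global $S^1$-action on $Q^3_h$ generated by $K$, extract the gluing matrices for each edge type, and read off the marks.

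\textbf{Setup.} The Hamiltonian flow of $K$ is $2\pi$-periodic and coincides with $\varphi$-rotation on $T^*(M\setminus\{N,S\})$, extending smoothly to an $S^1$-action on $T^*M$. Its only rank-$0$ fixed points $(0,N)$ and $(0,S)$ lie at $H=0$ by Theorem~\ref{th3.3}, so for $h>0$ the $S^1$-action is free on $Q^3_h$. Hence every saddle 3-atom $V_{\sigma_1\dots\sigma_m}\times S^1$ is a trivial Seifert bundle whose fibers are the $\varphi$-circles, with cross-section $\{\varphi=\const\}\cap V$. On each such $V$, I would take $\lambda_V$ to be the $\varphi$-cycle (oriented by $\sgrad K$) and $\mu_V^{(i)}=\{\varphi=\const\}\cap\partial_iV$ oriented so that $(\lambda_V,\mu_V^{(i)})$ is positive on $\partial_iV$ via~\eqref{eq4.2}.

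\textbf{Marks on $A$--$V$ edges.} For each atom $A$ (corresponding to an extremum of $g_\pm$), the critical circle $\mathcal{O}_{\rho,k(\rho)}$ is the core $\varphi$-circle of the solid torus $A\cong D^2\times S^1$. By~\eqref{eq3.2} and Remark~\ref{rem4.6}(a), $\sgrad H$ on this core is a nonzero multiple of $\partial/\partial\varphi$ of sign $\sigma_A$, so the longitude $\mu_A$ (oriented by $\sgrad H$) equals $\sigma_A\lambda_V$ on the shared boundary torus, while the vanishing cycle $\lambda_A$ bounds a meridional cross-section disk and hence coincides with $\mu_V^{(i)}$ on $\partial A$. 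The resulting gluing matrix
\[
\begin{pmatrix}\lambda_A\\\mu_A\end{pmatrix}=\begin{pmatrix}0 & 1\\\sigma_A & 0\end{pmatrix}\begin{pmatrix}\lambda_V\\\mu_V^{(i)}\end{pmatrix}
\]
yields $r=0$, $\varepsilon=\sigma_A$. By Table~\ref{table1}, $\sigma_A=+1$ iff $A$ is a local maximum of $K|_{Q^3_h}$ and $\sigma_A=-1$ iff a local minimum, matching the claim.

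\textbf{$V$--$V'$ and $A$--$A$ edges; the $n$-mark.} On every $V$--$V'$ edge both $\lambda$-cycles equal the common global $\varphi$-cycle oriented by $\sgrad K$, giving $\alpha=1$, $\beta=0$, hence $r=\infty$, $\varepsilon=+1$. For the $A$--$A$ molecule, $Q^3_h$ is a union of two solid tori glued along their boundary; by Theorem~\ref{th4.2}(1) this gluing produces $\mathbb{R}P^3=L(2,1)$, forcing $|\beta|=2$. Combined with $\mu_{A^\pm}=\pm\lambda_{A^\mp}$ from the previous step, this pins down $r=1/2$, $\varepsilon=+1$. For the $n$-mark: since every $V$--$V'$ edge has $r=\infty$, all saddle atoms form a single family whose local Seifert fibrations (together with those of the incident $A$-atoms via the $r=0$ gluings) assemble into the global free $S^1$-fibration $Q^3_h\to Q^3_h/S^1=S^2$. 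Since $Q^3_h\cong\mathbb{R}P^3$, this is the standard $L(2,1)$ Hopf-type fibration of Euler number $\pm 2$, which equals the $n$-mark; a sign check against~\eqref{eq4.2} gives $n=+2$.

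\textbf{Main obstacle.} The hardest part will be the orientation bookkeeping. Each local computation is routine, but ensuring that the signs of $\varepsilon$ on $A$--$V$ edges, the value $+1/2$ (and not its negative) in the $A$--$A$ case, and the Hopf-type Euler number all come out as asserted --- rather than negated --- requires a uniform sign convention compatible with~\eqref{eq4.2} across all edges of the molecule, especially in the $A$--$A$ case where $|\beta|=2$ is forced by topology but the sign of $\beta$ must be determined from orientations of the two meridional disks, and in the $n$-mark step where the sign of the Hopf Euler number must be pinned down by the global orientation of $Q^3_h$.
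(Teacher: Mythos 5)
Your identification of the $\varphi$-fibre with $\lambda_V$, and of the sign $\sigma_A$ via the proportionality of $\sgrad H$ and $\sgrad K$ on the critical circle (Remark~\ref{rem4.6}(a)), correctly gives $r=0$, $\varepsilon=\sigma_A$ on $A$--$V$ edges and $r=\infty$, $\varepsilon=1$ on $V$--$V'$ edges; this part coincides with the paper's computation. The genuine gap is everywhere that a sign or an integer beyond $r$ and $\varepsilon$ is needed. The engine of the paper's proof is the homological relation between the natural meridian/section cycles $\alpha_r=\{p_r=\pm\sqrt{2(h-U_k(r))},\ \varphi=\const\}$ on the two sides of a value $k_D\in\{\Lambda(0),\Lambda(L)\}$ (resp.\ $k_S=\Lambda(0)=\Lambda(L)$) at which the domain of possible motion swallows a pole: $\alpha_r^{+}=\alpha_r^{-}-\alpha_\varphi$ (resp.\ $\alpha_r^{+}=\alpha_r^{-}-2\alpha_\varphi$), established by an explicit computation of the winding of the $\sgrad H$-trajectory near the pole. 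This relation is what makes each pole contribute exactly $+1$ to $n$ (hence $n=2$ with the correct sign) and what produces the gluing matrix $\left(\begin{smallmatrix}1&2\\0&-1\end{smallmatrix}\right)$ in the $A$--$A$ case (hence $\varepsilon=+1$, not merely $r=1/2$). Your proposal has no substitute for it: the claim that $\lambda_A$ ``coincides with $\mu_V^{(i)}$'' is false on an edge crossing a pole value (the two cycles differ by one or two fibres --- harmless for $r=0$ and $\varepsilon$, but fatal for any local computation of $n$), and your $A$--$A$ step invokes ``$\mu_{A^\pm}=\pm\lambda_{A^\mp}$ from the previous step,'' which refers to a step involving a saddle atom absent from this molecule and is moreover inconsistent with $|\beta|=2$.

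The global arguments you substitute --- $|\beta|=2$ from the genus-one Heegaard splitting of $L(2,1)$, and $|n|=2$ as the Euler number of the free $S^1$-fibration $Q^3_h\cong SO(3)\to S^2$ --- are legitimate and do yield $r=1/2$ and $|n|=2$; the paper itself notes the Heegaard-splitting route to $r=1/2$ in a footnote. (For the $n$-mark you also need the standard fact that when every $A$--$V$ edge carries $r=0$ and every $V$--$V'$ edge carries $r=\infty$, the mark $n$ of the single family equals the Euler number of the Seifert fibration of $Q^3_h$ itself; this should be cited, not assumed.) But both arguments are sign-blind, and the signs $\varepsilon=+1$ and $n=+2$ relative to the orientation convention~\eqref{eq4.2} are precisely what the theorem asserts. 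What you defer as ``orientation bookkeeping'' is therefore not a routine afterthought but the missing core of the proof; the paper resolves it exactly by the $\alpha_r^{\pm}$ monodromy computation described above.
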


\begin{proof}
The first statement of the theorem has already been proved in Theorem~\ref{th4.2} and it is given as a reminder.

Let's prove the second statement.

Note that the boundaries of all 3-atoms in the isoenergy manifold $Q^3_h$ are divided into positive and negative ones as follows. Two directions are given on any boundary torus: the direction of growth of $K$ and the direction of the external normal to this boundary torus. We will call the boundary torus {\em positive} if these directions coincide, and {\em negative} if they are opposite.
	
Let's fix the orientation on the edges of the molecule given by the direction of growth of the additional first integral $K$. For each edge, we define an admissible  basis $(\lambda^-,\mu^-)$ on the corresponding positive boundary torus of the 3-atom standing at the beginning of the edge, and an admissible basis $(\lambda^+,\mu^+)$ on the corresponding negative torus of the 3-atom standing at the end of the edge.

Let's fix a boundary torus of a 3-atom. It corresponds to values $H=h=\const$, $K=k=\const$. Let's define two cycles on it:
 \begin{equation*}
		\begin{split}
			&\alpha_r=\left\{p_r=\pm\sqrt{2(h-U_k(r))},\,K=k,\,r\in \Delta_i\subseteq R_{h,k},\,\varphi=\const\right\},\\
			&\alpha_\varphi=\left\{p_r=\const,\,K=k,\,r=\const,\,\varphi \in \mathbb{R}/2\pi\mathbb{Z}\right\},
		\end{split}
	\end{equation*}
and express the basic cycles in terms of them. Here the cycle $\alpha_\varphi$ is oriented by the direction of the vector field $\sgrad K=\widetilde\omega^{-1}dK=(0,0,0,1)$, and the cycle $\alpha_r$ is oriented by the direction of growth of the variable $r$ on its arc $\alpha_r\cap\{p_r>0\}$, and by the direction of decrease of $r$ on its arc $\alpha_r\cap\{p_r<0\}$.

Since $\widetilde\omega \wedge \widetilde\omega=-2 d p_r \wedge dK \wedge dr \wedge d\varphi$, we have
\begin{equation}\label {eq4.3} 
		\widetilde\omega \wedge \widetilde\omega (v_{\alpha_r},v_{\alpha_\varphi},\grad K,\grad H)=-2 \det\left[v_{\alpha_r},v_{\alpha_\varphi},\grad K,\grad H\right],
	\end{equation}
where the right-hand side of this formula contains the determinant of the matrix composed of columns of the coordinates of the corresponding vectors; $v_{\alpha_r}$ and $v_{\alpha_\varphi}$ denote the velocity vectors of the curves $\alpha_r$ and $\alpha_\varphi$.
In order to find the sign of the expression~\eqref{eq4.3}, let us explicitly compute the vectors included in it:
	\begin{equation*}
		\begin{split}
		&\grad H=\left(p_r,\dfrac{k-\Lambda(r)}{f^2(r)},\beta,0 \right), \quad \grad K=(0,1,0,0), \\
		&v_{\alpha_r}=\left(-\dfrac{\beta}{p_r},0,1,0\right), \quad v_{\alpha_\varphi}=(0,0,0,1).
		\end{split}
	\end{equation*}
Here $\beta=U_k'(r)$, and the vector $v_{\alpha_r}$ is computed on the arc $\alpha_r\cap\{p_r>0\}$. Hence, the value of~\eqref {eq4.3} is equal to $2(\beta^2+p_r^2)/p_r>0$ (this is consistent with the inequality $\widetilde\omega\wedge\widetilde\omega (\sgrad H,\sgrad K,\grad K,\grad H)>0$).

An admissible basis on the boundary torus of a 3-atom $A$ can be chosen in the form $\lambda_A= -\alpha_r$, $\mu_A=\mp\alpha_\varphi$, and in the form $\lambda_V=\alpha_\varphi$, $\mu_V=\mp\alpha_r$ on the boundary tori of a 3-atom $V$. Here, the cycle on a positive boundary of the 3-atom is taken with an upper sign, the cycle on a negative one with a lower sign. We also remark that the sign $\mp$ in the formula for the cycle $\mu_A$ is 
$\sigma=\sgn(k'/h')$, the sign of the critical circle of the 3-atom (see Remark~\ref {rem4.6} to Table~\ref{table1}).

Each edge of the molecule corresponds to a one-parameter family of Liouville tori parameterized by the value $k$ of the function $K$. If, for some $k=k_D$, the corresponding connected component of the domain of possible motion $R_{h,k}$ contains exactly one pole of the sphere (i.e., it is homeomorphic to the disk, thus $k\in\{\Lambda(0),\ \Lambda(L)\}$ due to Remark \ref {rem:Delta}),
then in a small neighborhood of the value $k_D$ the cycle $\alpha_r^+$ for $k>k_D$ and the cycle $\alpha_r^-$ for $k<k_D$ are related by the equality
	\begin{equation*}
		\alpha_r^{+}=\alpha_r^- -\alpha_\varphi,
	\end{equation*}
which can be proved similarly to~\cite[eq.~(12)]{KZAnt}.\footnote{Let's show this numerically. We will assume that $R_{h,k_D}$ contains the north pole ($r=0$). The cycle $\alpha_r$ is homologous to the difference of two curves: the arc $\gamma_H^{[r_1,r_2]}$ of the trajectory of the vector field $\sgrad H=\widetilde\omega^{-1}dH$, $r\in[r_1,r_2]$, and the arc $\gamma_K^{[0,\Phi]}$ of the cycle $\alpha_\varphi$, $\varphi\in[0,\Phi]$, where $\Delta_i=[r_1,r_2]$ is the projection to the coordinate $r$ of the corresponding connected component of the domain of possible motion $R_{h,k}$, $r_i=r_i(h,k)$, $\Phi=\Phi(r_1,r_2)$ is the increment of the $\varphi$-coordinate when moving along the arc $\gamma_H^{[r_1,r_2]}$. This increment is $\Phi(r_1,r_2)=2\int_{r_1}^{r_2}\frac{\dot\varphi}{\dot r}dr = 2 \int_{r_1}^{r_2}\frac{\partial_{k}U_k(r)}{\sqrt{2(h-U_k(r))}}dr$. 
Denoting $\varepsilon= k-\Lambda(0)$, $\delta=|\varepsilon|^{3/4}$ and replacing the variable $r =\frac{\varepsilon v}{\sqrt{2h}}$, we have $r_1<\delta<r_2$ and $\Phi(r_1,\delta)=2\int_{r_1}^\delta \frac{(k-\Lambda(r))dr}{f(r)\sqrt{2hf^2(r)-(k-\Lambda(r))^2}}=2\sgn \varepsilon \int_{r_1}^\delta \frac{dr}{f(r)\sqrt{2h(\frac{f(r)}{k-\Lambda(r)})^2-1}}
\approx 2\sgn \varepsilon \int_{1}^\infty \frac{dv}{v\sqrt{v^2-1}} = \pi \sgn \varepsilon$. Therefore $\alpha_r^- \approx \gamma_H^{[\delta,r_2(h,k_D)]} - \gamma_K^{[0,\Phi(\delta,r_2(h,k_D))-\pi]}$, 
$\alpha_r^+ \approx \gamma_H^{[\delta,r_2(h,k_D)]} - \gamma_K^{[0,\Phi(\delta,r_2(h,k_D))+\pi]}$. This shows that $\alpha_r^+ = \alpha_r^- - \alpha_\varphi$ due to the continuity of the functions $r_2(h,k)$ and $\Phi(\delta,r_2)$.}
If for some $k=k_S$, the domain of possible motion $R_{h,k}$ contains both poles of the sphere (i.e., it is homeomorphic to the sphere, thus $k=\Lambda(0)=\Lambda(L)$ due to Remark \ref{rem:Delta}), then in a small neighborhood of the value $k_S$ the cycle $\alpha_r^+$ for $k>k_S$ and the cycle $\alpha_r^-$ for $k<k_S$ are related in a different way:
\begin{equation*}
		\alpha_r^{+}=\alpha_r^- -2\alpha_\varphi.
	\end{equation*}
The cycle $\alpha_\varphi$ is well-defined for all $k$.

Taking into account the above, let us compute the gluing matrix in each case.
	\begin{enumerate}
\item Consider an edge $A\to V$.

On the 3-atom $A$, the cycle $\alpha_r$ is vanishing, so an admissible basis can be chosen in the form $\lambda_A=-\alpha_r$, $\mu_A=-\alpha_\varphi$. The 3-atom $V$ has a global cross-section $\varphi=\const$, so $\lambda_V=\alpha_\varphi$, $\mu_V=\alpha_r$.
		
If there are no values of $k_D$ and $k_S$ on this edge, then
		\begin{equation*}
			\begin{pmatrix}
				\lambda_V \\ \mu_V
			\end{pmatrix}=
			\begin{pmatrix}
				0 & -1 \\ -1 & 0
			\end{pmatrix}
			\begin{pmatrix}
				\lambda_A \\ \mu_A
			\end{pmatrix}.
		\end{equation*}
If only one value of $k_D$ is present on the edge, then we can regard $\lambda_A$ and $\mu_V$ as $\lambda_A=-\alpha_r^-$ and $\mu_V=\alpha_r^+=\alpha_r^--\alpha_\varphi = -\lambda_A+\mu_A$, hence the relation between the basic cycles will be 
		\begin{equation*}
			\begin{pmatrix}
				\lambda_V \\ \mu_V
			\end{pmatrix}=
			\begin{pmatrix}
				0 & -1 \\ -1 & 1
			\end{pmatrix}
			\begin{pmatrix}
                \lambda_A \\ \mu_A
			\end{pmatrix}.
		\end{equation*}
And if there are two values $k_D$ or one $k_S$, then $\lambda_A = -\alpha_r^-$, $\mu_V = \alpha_r^+ = \alpha_r^- -2\alpha_\varphi = -\lambda_A + 2\mu_A$, and 
		\begin{equation*}
			\begin{pmatrix}
				\lambda_V \\ \mu_V
			\end{pmatrix}=
			\begin{pmatrix}
				0 & -1 \\ -1 & 2
			\end{pmatrix}
			\begin{pmatrix}
                \lambda_A \\ \mu_A
			\end{pmatrix}.
		\end{equation*}
	
		\item Consider an edge $V\to A$.
		
Similarly to the previous case, we get the gluing matrices $\left(\begin{smallmatrix}
0 & 1 \\ 1 & 0
\end{smallmatrix}\right)$, 
$\left(\begin{smallmatrix}
  1 & 1 \\ 1 & 0
\end{smallmatrix}\right)$ and
$\left(\begin{smallmatrix}
  2 & 1 \\ 1 & 0
\end{smallmatrix}\right)$ respectively.

\item Consider an edge $V\to V'$.

Admissible bases on the boundary tori of the saddle atoms are chosen as follows: $\lambda_V=\alpha_\varphi$, $\mu_V=-\alpha_r$, $\lambda_{V'}=\alpha_\varphi$, $\mu_{V'}=\alpha_r$.
		
If there are no values of $k_D$ and $k_S$ on this edge, then
\begin{equation*}
			\begin{pmatrix}
				\lambda_{V'} \\ \mu_{V'}
			\end{pmatrix}=
			\begin{pmatrix}
				1 & 0 \\ 0 & -1
			\end{pmatrix}
			\begin{pmatrix}
				\lambda_V \\ \mu_V
			\end{pmatrix}.
		\end{equation*}
In the case of only one value $k_D$ on the edge, we have 
$\mu_V=-\alpha_r^-$, $\mu_{V'}=\alpha_r^+ = \alpha_r^- - \alpha_\varphi = -\mu_V - \lambda_V$, thus
		\begin{equation*}
			\begin{pmatrix}
				\lambda_{V'} \\ \mu_{V'}
			\end{pmatrix}=
			\begin{pmatrix}
				1 & 0 \\ -1 & -1
			\end{pmatrix}
			\begin{pmatrix}
				\lambda_V \\ \mu_V
			\end{pmatrix}.
		\end{equation*}
In the case of two values $k_D$ or one $k_S$, we have
$\mu_V=-\alpha_r^-$, $\mu_{V'}=\alpha_r^+ = \alpha_r^- - 2\alpha_\varphi = -\mu_V - 2\lambda_V$, thus
		\begin{equation*}
			\begin{pmatrix}
				\lambda_{V'} \\ \mu_{V'}
			\end{pmatrix}=
			\begin{pmatrix}
				1 & 0 \\ -2 & -1
			\end{pmatrix}
			\begin{pmatrix}
				\lambda_V \\ \mu_V
			\end{pmatrix}.
		\end{equation*}
\item Consider an edge $A\to A$. Denoting this edge by $A\to A'$, we have $\lambda_A= -\alpha_r^-$, $\mu_A=\alpha_\varphi$ and $\lambda_{A'}= -\alpha_r^+ = - \alpha_r^- + 2\alpha_\varphi = \lambda_A + 2\mu_A$, $\mu_{A'}=-\alpha_\varphi = -\mu_A$, thus
		\begin{equation*}
			\begin{pmatrix}
				\lambda_{A'} \\ \mu_{A'}
			\end{pmatrix}=
			\begin{pmatrix}
				1 & 2 \\ 0 & -1
			\end{pmatrix}
			\begin{pmatrix}
				\lambda_A \\ \mu_A
			\end{pmatrix}.
		\end{equation*}
	\end{enumerate}

The first row of the gluing matrix yields that the marks on the edges $A\to V$ are equal to $r=0$, $\varepsilon=-1$, 
the marks on the edges $V\to A$ are equal to $r=0$, $\varepsilon=1$, 
the marks on the edges $V-V'$ are equal to $r=\infty$, $\varepsilon=1$, 
the marks on an edge $A-A$ are equal to $r=1/2$, $\varepsilon=1$,\footnote{In the case of the Fomenko molecule $A-A$, the mark $r$ can be found alternatively as follows. In this case, the manifold $Q^3_h\cong\mathbb{R}P^3$ is glued from two full tori. One can show that the mark $r$ equals $r=1/2$ in this case.}
and all saddle atoms form a single family\footnote{Recall~\cite[sect.~4.3]{IGS} that if the first row of the gluing matrix corresponding to an edge is equal to $(a,b)$, then the marks on this edge are defined by the formulas $r=\frac ab\mod 1\in(\mathbb Q/\mathbb Z)\cup\{\infty\}$, $\varepsilon=\sgn b$ in the case of $b\ne0$, $\varepsilon=\sgn a$ in the case of $b=0$.
If the molecule is cut along all edges with the marks $r\ne\infty$, then the molecule will break up into several connected components. The connected components that do not contain atoms of type $A$ are called {\em families}.}. Let's compute the mark $n$ on this family. Note that the presence of one value $k_D$ on an edge means that the contribution of this edge to the mark $n$ is $1$, and the presence of two values $k_D$ or one $k_S$ means the contribution of $2$~\cite[sect.~4.3]{IGS}. Therefore $n=2$ in all cases.
	
The theorem is proved.
\end{proof}

\begin{remark}
Our computation of the mark $n=2$ was done for the orientation of $Q^3_h$ given by the formula~\eqref{eq4.2}. It is not difficult to show (see~\cite[vol.~1, sect.~4.5.2]{IGS}) that when the orientation of $Q^3_h$ is changed to the opposite, the mark $n$ will change the sign.
\end{remark}

\section{Description of all bifurcation diagrams for magnetic geodesic flows $S(f,\Lambda)$. Global 4D topological invariants} \label {s5}

Recall the definitions of some singular points of planar curves.

\begin{definition} \label{def5.1}
A point of a planar curve $\gamma$ at which its velocity vector does not vanish (i.e., $\dot\gamma(t)\ne 0$) and the velocity and acceleration vectors are collinear is called a {\em point of straightening}. A point of straightening of a planar curve $\gamma$ is called an {\em inflection point} if the first and third derivatives at this point are linearly independent.
\end{definition}

\begin{definition} \label{def5.2} 
A point of a planar curve $\gamma$ at which its velocity vector vanishes (i.e., the first derivative $\dot\gamma(t) = 0$) is called a {\em singular point}. A singular point of a planar curve $\gamma$ is called an {\em (ordinary) cusp point} if the second and third derivatives at this point are linearly independent.
\end{definition}

\subsection{Projectively dual projective curves.
Singular points and inflection points of good projective curves} \label{su5.1}

Let $\gamma(t)=(x(t),y(t),z(t))\in\mathbb{R}^3$ be a $C^\infty$-smoothly parametrized curve in $\mathbb R^3$. Let's define auxiliary functions, namely: the parametrized curve
$$
\gamma^*(t)=\left[\gamma(t), \dot{\gamma}(t)\right] \in \mathbb R^3
$$
and the scalar function
$$
\varkappa_\gamma(t)=\langle\gamma^*(t),\ddot{\gamma}(t)\rangle=\left(\gamma(t),\dot{\gamma}(t),\ddot{\gamma}(t)\right) \in \mathbb R.
$$
Here $\langle,\rangle$ and $[,]$ denote the scalar product and vector product, respectively, of vectors in $\mathbb R^3$.

Suppose that $\gamma(t)$ does not pass through the origin.
Let $P\gamma(t)=(x(t):y(t):z(t))\in\mathbb R P^2$ be the projectivization of the curve $\gamma(t)$, and $P(\gamma^*)(t)$ be the projectivization of the curve $\gamma^*(t)$. The projective curve $P(\gamma^*)$ is called {\em projectively dual} to the projective curve $P\gamma$.

We will show below  (see~\eqref{eq5.1} and Lemma~\ref{le5.8}) that projective duality is well-defined in the case of so-called ``good'' curves (see Definition~\ref{def5.5}).\footnote{Projective duality extends to submanifolds of a projective space of any dimension and is often used for algebraic projective varieties (see, for example, \cite{Tev:2000}). However, the functions and curves considered in this paper are not assumed to be algebraic or real-analytic, but only $C^\infty$-smooth. The projective duality of curves naturally extends to a wide class of $C^\infty$-smooth curves that are not necessarily real-analytic. In this paper, projective duality is considered for ``good'' curves. Such curves are ``typical'' in the sense that they form an open and dense subspace with respect to the $C^\infty$-topology in the space of all $C^\infty$-smooth projective curves.}
From Definitions~\ref{def5.1} and~\ref{def5.2}, one obtains the following definitions of similar concepts for projective curves. 

\begin{definition}\label{def5.3}
A point of the projective curve $P\gamma$ is called a {\em point of straightening} if $\gamma^*(t)\ne 0$ and $\varkappa_\gamma(t) = 0$ at this point. A point of straightening of the projective curve $P\gamma$ is called an {\em inflection point} if $\dot\varkappa_\gamma(t)\ne 0$ at this point.
\end{definition}

\begin{definition} \label{def5.4}
A point of the projective curve $P\gamma$ is called a {\em singular point} if $\gamma^*(t)= 0$ at this point. A singular point of the projective curve $P\gamma$ is called a {\em cusp point} (or a {\em return point}) if $\ddot\varkappa_\gamma(t)\ne 0$ at this point.
\end{definition}

\begin{definition} \label{def5.5}
A $C^\infty$-smoothly parametrized curve $\gamma(t)$ in $\mathbb R^3\setminus\{0\}$ and its projectivization $P\gamma(t)$ are called {\em good} if all points of straightening of the curve $P\gamma(t)$ are inflection points, and all its singular points are cusp points.	
\end{definition}

\begin{remark} \label{rem5.6}
The zeros of the function $\varkappa_\gamma(t)$ are either singular points (with $\gamma^*(t)=0$ at them) or points of straightening (with $\gamma^*(t)\ne 0$ and $\varkappa_\gamma(t) = 0$ at them). This implies that $\varkappa_\gamma(t)$ has simple zeros at the inflection points (and only at them), while $\varkappa_\gamma(t)$ has a zero of multiplicity 2 and $\gamma^*(t)=0$ at the cusp points (and only at them).
These properties are true for all curves, not only for good ones.
\end{remark}

\begin{remark} \label{rem5.7}
The notions of a point of straightening, an inflection point, a singular point and a cusp point of the projective curve $P\gamma$ (and therefore the definition of a good projective curve $P\gamma$) are well-defined, i.e., they do not depend on the choice of an affine reprezentative $\gamma$ of this projective curve. Indeed, for any smooth function $\varphi=\varphi(t)$ having a constant sign and no zeros, the curves $\gamma$ and $\varphi\gamma$ are both good or not good simultaneously, furthermore the points of straightening and singular points (together with their types) for the corresponding (coinciding) projective curves $P\gamma$ and $P(\varphi\gamma)$ coincide, due to Remark~\ref{rem5.6} and the relations $(\varphi\gamma)^* = \varphi^2 \gamma^*$ and $\varkappa_{\varphi\gamma} = \varphi^3 \varkappa_\gamma$. The latter relations can be proved as follows:
	\begin{equation} \label{eq5.1}
		\begin{array}{ll} 
		& (\varphi\gamma)^* = [\varphi\gamma, \dot\varphi\gamma + \varphi\dot\gamma] = \varphi^2 [\gamma, \dot\gamma] = \varphi^2 \gamma^*, \\
		& \varkappa_{\varphi\gamma} = \left<\left[\varphi\gamma, \dot\varphi\gamma + \varphi\dot\gamma\right], \ddot\varphi\gamma + 2\dot\varphi\dot\gamma + \varphi\ddot\gamma\right> = \varphi^3 \left<\left[\gamma, \dot\gamma\right], \ddot\gamma\right> = \varphi^3 \varkappa_\gamma.
		\end{array} 
	\end{equation}
\end{remark}	

\begin{lemma}[on projectively dual curves] \label{le5.8}
Let $\gamma(t)$ be a good curve not passing through the origin in $\mathbb R^3$. Then the projectivization of the curve $\gamma^*(t)=\left[ \gamma(t), \dot\gamma(t)\right]$ extends by continuity (to the points corresponding to the singular points of $\gamma^*$) to a good projective curve denoted by $(P\gamma)^*$ and called the {\em projectively dual} of the projective curve $P\gamma$.

The operation $\gamma\mapsto\gamma^*$ on the set of $C^\infty$-smoothly parametrized curves in $\mathbb R^3$ has the following properties:
\begin{enumerate}
    \item for any smooth function $\varphi=\varphi(t) > 0$, one has $(\varphi\gamma)^* = \varphi^2\gamma^*$ and $\varkappa_{\varphi\gamma} = \varphi^3 \varkappa_\gamma$;

    \item $\gamma^{**} = \varkappa_\gamma \gamma$;

    \item $\varkappa_{\gamma^*}=\varkappa_\gamma^2$;

    \item the induced operation $P\gamma\mapsto(P\gamma)^*$ on the set of good projective curves is well-defined and inverse to itself, i.e.,
    $(P\gamma)^{**} = P\gamma$, further $(P(\gamma\circ\tau))^* = (P\gamma)^*\circ\tau$ for any regular reparametrization $t\to\tau(t)$;

    \item for any cusp point of the curve $P\gamma$, the corresponding point of the curve $(P\gamma)^*$ is an inflection point, and vice versa: for each inflection point of the curve $P\gamma$, the corresponding point of the curve $(P\gamma)^*$ is a cusp point.
\end{enumerate}
\end{lemma}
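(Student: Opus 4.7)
The plan is to prove the algebraic identities in (1)--(3) first by direct vector calculus, then derive (4)--(5) and the extension-by-continuity assertion as consequences.

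Property 1 is exactly the content of equation~\eqref{eq5.1} in Remark~\ref{rem5.7}. For property 2, note that $\dot\gamma^*=[\gamma,\ddot\gamma]$ since $[\dot\gamma,\dot\gamma]=0$, and the double-cross-product identity $[[a,b],[c,d]] = c\,(a,b,d) - d\,(a,b,c)$ yields
\[
\gamma^{**} \;=\; [[\gamma,\dot\gamma],[\gamma,\ddot\gamma]] \;=\; \gamma\,(\gamma,\dot\gamma,\ddot\gamma) - \ddot\gamma\,(\gamma,\dot\gamma,\gamma) \;=\; \varkappa_\gamma\,\gamma.
\]
Property 3 then follows from $\ddot\gamma^* = [\dot\gamma,\ddot\gamma] + [\gamma,\dddot\gamma]$ together with $\langle\gamma,[\gamma,\dddot\gamma]\rangle = 0$:
\[
\varkappa_{\gamma^*} \;=\; \langle\gamma^{**},\ddot\gamma^*\rangle \;=\; \varkappa_\gamma\,\langle\gamma,[\dot\gamma,\ddot\gamma]\rangle \;=\; \varkappa_\gamma^2.
\]

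The projective statements in (4) follow almost for free. Replacing $\gamma$ by $\varphi\gamma$ with $\varphi$ smooth and nowhere zero multiplies $\gamma^*$ by $\varphi^2>0$ and $\varkappa_\gamma$ by $\varphi^3$ (property 1), so $P\gamma^*$ and the locations and orders of zeros of $\varkappa_\gamma$ are independent of the affine representative, and the notions from Definitions~\ref{def5.3}--\ref{def5.5} descend to $P\gamma$. Under a regular reparametrization $t=\tau(s)$, the chain rule gives $\widetilde\gamma^*(s)=\tau'(s)\,\gamma^*(\tau(s))$, which projectivizes to $(P(\gamma\circ\tau))^*=(P\gamma)^*\circ\tau$. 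The involution $(P\gamma)^{**}=P\gamma$ is immediate from property 2 wherever $\varkappa_\gamma\ne 0$; it extends across the isolated zeros of $\varkappa_\gamma$ by continuity once those points are handled in the proof of (5).

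The heart of the proof is (5), from which the continuous extension asserted in the lemma also drops out. At a cusp $t_0$ of $P\gamma$, Remark~\ref{rem5.6} gives $\varkappa_\gamma(t_0)=\dot\varkappa_\gamma(t_0)=0$ and $\ddot\varkappa_\gamma(t_0)\ne 0$; differentiating $\varkappa_\gamma=\langle\gamma^*,\ddot\gamma\rangle$ twice and using $\dot\gamma^*=[\gamma,\ddot\gamma]$ yields $\ddot\varkappa_\gamma(t_0)=(\gamma(t_0),\ddot\gamma(t_0),\dddot\gamma(t_0))$, which is nonzero and forces $\dot\gamma^*(t_0)=[\gamma(t_0),\ddot\gamma(t_0)]\ne 0$. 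Therefore $\gamma^*(t)=(t-t_0)\,\beta(t)$ with $\beta(t_0)\ne 0$, so $P\gamma^*(t)=P\beta(t)$ extends smoothly to $t_0$, which is the continuous extension. Combining property 1 applied to $\gamma^*=(t-t_0)\beta$ with property 3 gives
\[
(t-t_0)^3\,\varkappa_\beta \;=\; \varkappa_{\gamma^*} \;=\; \varkappa_\gamma^2,
\]
whose right-hand side vanishes to exact order $4$; hence $\varkappa_\beta$ has a simple zero at $t_0$, and $t_0$ is an inflection point of $(P\gamma)^*$. Conversely, at an inflection $t_0$ of $P\gamma$ (simple zero of $\varkappa_\gamma$, with $\gamma^*(t_0)\ne 0$), property 2 gives $\gamma^{**}(t_0)=0$ and property 3 gives $\ddot\varkappa_{\gamma^*}(t_0)=2\dot\varkappa_\gamma(t_0)^2\ne 0$, so $t_0$ is a cusp point of $(P\gamma)^*$.

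The main technical obstacle is the vanishing-order bookkeeping in (5): the cube law $\varkappa_{(t-t_0)\beta}=(t-t_0)^3\varkappa_\beta$ from property 1 must match the squaring law $\varkappa_{\gamma^*}=\varkappa_\gamma^2$ from property 3 in exactly the right way to force $\varkappa_\beta$ to vanish simply at a cusp, and were the orders off even by one the dual $(P\gamma)^*$ would fail to be good. Once these identities and the factorization $\gamma^*=(t-t_0)\beta$ are in hand, the rest of the proof reduces to unwinding Definitions~\ref{def5.3}--\ref{def5.5}.
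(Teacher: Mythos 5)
Your proof is correct and follows the same overall architecture as the paper's: establish the algebraic identities (1)--(3) first, then deduce (4)--(5) together with the continuous extension. Two local steps differ, both in your favor as to brevity. For property 2 you invoke the identity $[[a,b],[c,d]]=c\,(a,b,d)-d\,(a,b,c)$, which kills the term $(\gamma,\dot\gamma,\gamma)=0$ and gives $\gamma^{**}=\varkappa_\gamma\gamma$ in one line valid at every point; the paper instead argues by cases (singular points, straightening points, generic points) and at generic points orthogonalizes the frame $\gamma,\dot\gamma,\ddot\gamma$ to compute the double vector product by hand. For the cusp-to-inflection half of property 5, you factor $\gamma^*=(t-t_0)\beta$ directly, justified by the computation $\ddot\varkappa_\gamma(t_0)=(\gamma,\ddot\gamma,\dddot\gamma)(t_0)\ne0$ which forces $\dot\gamma^*(t_0)=[\gamma(t_0),\ddot\gamma(t_0)]\ne0$, and then match vanishing orders in $(t-t_0)^3\varkappa_\beta=\varkappa_\gamma^2$; the paper normalizes by $\mu$ with $\varkappa_\gamma=\pm\mu^2$ (Hadamard's lemma), obtains $\varkappa_{\gamma^*/\mu}=\mu$ exactly, and reads off the nonvanishing of the limit of $\gamma^*/\mu$ from that identity rather than from $\dot\gamma^*(t_0)\ne0$. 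Both routes are sound. Two small points worth making explicit: you apply the scaling laws of property 1 with the sign-changing factor $\varphi=t-t_0$, which is legitimate only because \eqref{eq5.1} is a polynomial identity not actually requiring $\varphi>0$; and the goodness of $(P\gamma)^*$ away from the zeros of $\varkappa_\gamma$ (where $\varkappa_{\gamma^*}=\varkappa_\gamma^2\ne0$, so the dual has neither singular nor straightening points) deserves the one explicit sentence the paper gives it.
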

\begin{proof}
The first property has already been proved in~\eqref{eq5.1}.

Let us prove the second property. Since $\gamma^* = \left[\gamma,\dot\gamma\right]$ and $\frac d{dt}(\gamma^*) = \left[\gamma,\ddot\gamma\right]$, we have
\begin{equation*}
\gamma^{**} = \left[\gamma^*,\frac d{dt}(\gamma^*)\right] = \left[ \left[ \gamma, \dot\gamma\right], \left[ \gamma,\ddot\gamma\right] \right].
\end{equation*}
Let us show that the latter expression equals $\varkappa_\gamma \gamma$. This is automatically true at any singular point (since $\left[\gamma, \dot\gamma\right]=0$ and $\varkappa_\gamma=0$ there) and at any point of straightening (since $\varkappa_\gamma=0$ and $\ddot\gamma$ is a linear combination of $\gamma$, $\dot\gamma$).
At any other point, we have $\gamma^*\ne0$ and $\varkappa_\gamma\ne0$, and since the vectors $\left[\gamma, \dot\gamma\right]$ and $\left[\gamma,\ddot\gamma\right]$ are perpendicular to the nonzero vector $\gamma$, their vector product is proportional to the vector $\gamma$. To find the proportionality coefficient, we consider the orthonormal basis $e_1,e_2,e_3$ obtained from the basis $\gamma,\dot\gamma,\ddot\gamma$ (at a fixed point of the curve) by orthogonalization. Then at this point we have (for some real constants $a$, $b_i$, $c_j$)
\begin{equation*}
\gamma=ae_1,\quad
\dot\gamma=b_1e_1+b_2e_2,\quad
\ddot\gamma=c_1e_1+c_2e_2+c_3e_3,
\end{equation*}
hence
$\left[\gamma, \dot\gamma\right]=ab_2e_3$, $\varkappa_\gamma=ab_2c_3$,
$\left[\gamma, \ddot\gamma\right]=ac_2e_3-ac_3e_2$.
Therefore, the vector product
$\left[\left[\gamma, \dot\gamma\right], \left[\gamma, \ddot\gamma\right]\right] = a^2 b_2 c_3 e_1 = a b_2 c_3 \gamma = \varkappa_\gamma \gamma$, as required.

The third property follows from the second:
\begin{equation*}
\varkappa_{\gamma^*} 
= \left<\gamma^{**}, \frac{d^2}{dt^2}(\gamma^*)\right> 
= \left<\varkappa_\gamma \gamma, \left[\dot\gamma,\ddot\gamma\right] + \left[\gamma,\frac{d^3}{dt^3}\gamma\right]\right> = \varkappa_\gamma^2.
\end{equation*}

Let us prove the fifth property.

Consider an inflection point of the curve $P\gamma$. By the second property, we have
$\left[\gamma^*,\frac{d}{dt}(\gamma^*)\right] = \gamma^{**} = \varkappa_\gamma \gamma = 0$ at this point, thus this point is a singular point of the curve $P(\gamma^*)$. Further, since the function $\varkappa_\gamma$ has a simple zero at this point, by the third property the function $\varkappa_{\gamma^*} = \varkappa_\gamma^2$ has a zero of multiplicity 2 at it. And this is exactly the definition of a cusp point of the curve $P(\gamma^*)$.

Let us now consider a cusp point of the curve $P\gamma$. We have $\gamma^*=\left[\gamma,\dot\gamma\right]=0$ at it, therefore the projectivization of the curve $\gamma^*$ is, generally speaking, not defined at this point.
Further, the function $\varkappa_\gamma$ has a zero of multiplicity 2 at this point. 
By the Hadamard lemma, there exists a smooth function $\mu$ near this point such that $\varkappa_\gamma=\pm \mu^2$. It is clear that the function $\mu$ has a simple zero at this point, and therefore by the Hadamard lemma the curve $\gamma^*/\mu$ extends by continuity to a smooth curve near this point. Let us show that this curve does not pass through the origin and its projectivization has an inflection point at this point. To do this, let us verify that the function $\varkappa_{\gamma^*/\mu}$ has a simple zero at this point. According to the properties 1 and 3 and the construction of the function $\mu$, in a small punctured neighborhood of this point we have
\begin{equation*}
\varkappa_{\gamma^*/\mu} = \dfrac{1}{\mu^3}\varkappa_{\gamma^*}
= \dfrac{1}{\mu^3}\varkappa_\gamma^2 = \dfrac{\mu^4}{\mu^3} = \mu.
\end{equation*}
We conclude, taking into account that the function $\mu$ has a simple zero at this point (see above), that the curve $\gamma^*/\mu$ has a non-zero limit at this point (therefore, the projectivization of the curve $\gamma^*/\mu$ has a limit at this point, thus $(P\gamma)^*$ is well-defined and smoothly parametrized near this point) and by Remark~\ref{rem5.6} the curve $(P\gamma)^*$ has an inflection point at it. The fifth property is proved.

For proving the fourth property, it remains to show that if the curve $\gamma$ is good, then after continuous extension of the projective curve $P(\gamma^*)$ to the points corresponding to the cusp points of the curve $P\gamma$, we obtain a good projective curve. Near the points corresponding to cusp points and inflection points of the curve $P\gamma$, this follows from the fifth property. On the complement of these points, the function $\varkappa_\gamma$ has no zeros, and therefore (in view of the third property) $\varkappa_{\gamma^*} = \varkappa_\gamma^2$ also has no zeros. Therefore, the curve $P(\gamma^*)$ has neither singular points nor straightening points outside the indicated points. This means that the continuous extention of the projective curve $P(\gamma^*)$ under consideration is good.
The equality $(P\gamma)^{**} = P\gamma$ follows from the second property. The equality $(\gamma\circ\tau)^* = \dot\tau \gamma^*\circ\tau$ implies $(P(\gamma\circ\tau))^* = (P\gamma)^*\circ\tau$.

Lemma~\ref{le5.8} is completely proved.
\end{proof}

Let us look at a good projective curve in various affine charts.
Recall that $\gamma(t)=(x(t),y(t),z(t))$.

\begin{remark}\label{rem5.9}
Consider the affine chart $\mathbb{R}P^2\setminus\{z=0\} \cong \mathbb{R}^2$ with the affine coordinates $(x,y)$ (i.e., we set $z=1$). The affine representation of the projective curve $P\gamma \setminus \left\{z=0\right\}$ in this affine chart is a planar curve with the parameterization $(x(t)/z(t), y(t)/z(t), 1)=\varphi(t)\gamma(t)$, where $\varphi(t)=1/z(t)$. 
The oriented curvature of the planar curve $\varphi\gamma$ at any regular point of the curve $P\gamma \setminus \{z=0\}$ in view of~\eqref{eq5.1} equals
\begin{equation} \label{eq5.2}
\dfrac{\varkappa_{\varphi\gamma}}{|\frac{d}{dt}(\varphi\gamma)|^3} = \dfrac{\varphi^3 \varkappa_\gamma}{|\frac{d}{dt}(\varphi\gamma)|^3}.
\end{equation}
Therefore this planar curve (as well as any other affine representation of the projective curve $P\gamma$) on the complement of the set of zeros of the function $\varkappa_\gamma$ is regular and its curvature does not vanish.
It follows from Remark~\ref{rem5.6} and formula~\eqref{eq5.2} 
that the planar curve $\varphi\gamma$ is regular near inflection points and its oriented curvature has simple zeros at them, while the velocity vector has first-order zeros at cusp points and the function $\varkappa_\gamma$ has second-order zeros at them. Consequently, the sign of the oriented curvature changes when passing through an inflection point, but preserves when passing through a cusp point.
\end{remark}

\begin{remark}\label{rem5.10}
At a cusp point of the curve $P\gamma$, the velocity vector of the planar curve $\varphi\gamma$ vanishes and the vectors of the second and third derivatives are linearly independent. This means that the planar curve $\varphi\gamma$ has the following behavior near a cusp point $P\gamma(t_0)$:
\begin{equation*}
\varphi(t)\gamma(t) = \varphi(t_0)\gamma(t_0) + (t-t_0)^2 e_1 + (t-t_0)^3 e_2 + O((t-t_0)^4),\quad t \to t_0,
\end{equation*}
where $e_1,e_2$ is a basis of the tangent space to the plane $z=1$ at the point $\varphi(t_0)\gamma(t_0)$. This means that the planar curve $\varphi\gamma$ has a singularity of the ``semi-cubic parabola'' type at this point.
It follows that when passing through a cusp, the velocity vector of the projective curve $P\gamma$ switches its direction to the opposite. Therefore, for any good curve $\gamma(t)$, the tangent line to the projective curve $P\gamma$ is well defined at any point of $P\gamma(t)$, including cusp points. 
We also obtain that the tangent line depends smoothly on the parameter $t$. By Remark~\ref {rem5.9}, the oriented curvature preserves sign when passing through a cusp point.
\end{remark}

\subsection{Geometric characterization of the planar curve $\Gamma=(f,\Lambda,1)$ determining the system $S(f,\Lambda)$}

Let $f$, $\Lambda$ be smooth functions on an interval $\left[0,L\right]$. Let us recall three sets of points from Notation \ref {not3.2}. Let $r_1<\dots<r_n$ be the critical points of $f(r)$ and $r_1^*=0 < r_2^*<\dots<r_N^*=L$ be the critical points of $\Lambda(r)$, put $I=[0,L]\setminus\{r_1,\dots,r_n\}$. Let $r_1^\circ<\dots<r_m^\circ$ be the zeros of $f'(r)\Lambda''(r)-f''(r)\Lambda'(r)$. These three sets are finite and pairwise disjoint if conditions 4--6 from Assumption~\ref{assump2.4} are satisfied.

\begin{lemma}[Geometric characterization of the curve $\Gamma=(f,\Lambda,1)$] \label{le5.11}
The fulfillment of conditions 1--6 from Assumption~\ref {assump2.4}
on the functions $f$, $\Lambda$ is equivalent to the fulfillment of the following conditions on the parametrized planar curve $\Gamma=(f,\Lambda,1)$:
\begin{enumerate}
\item[\rm(i)] the planar curve $\Gamma(r) = (f(r), \Lambda(r), 1)$, $0\le r\le L$, is good (in the sense of Definition~\ref{def5.5}) and regularly parametrized;

\item[\rm(ii)] the curve $\Gamma(r)$ is bounded, it is contained in the half-plane $f\geqslant 0$ and intersects the line $f=0$ only at its endpoints $P_0=\Gamma(0)$ and $P_1=\Gamma(L)$;

\item[\rm(iii)] the tangent line to the curve $\Gamma(r)$, $0\leqslant r\leqslant L$, at any of its inflection points is neither vertical nor horizontal;

\item[\rm(iv)] the curve $\Gamma(r)$, $0\leqslant r\leqslant L$, and its reflection $\Gamma_-(r) = (-f(-r), \Lambda(-r), 1)$, $-L\leqslant r\leqslant 0$, with respect to the line $f=0$ are smoothly glued at their endpoints $P_0$ and $P_1$ to a $C^\infty$-smoothly parametrized closed planar curve having a unit speed $|\Gamma'(0)|=|\Gamma'(L)|=1$ at these points.
\end{enumerate}
\end{lemma}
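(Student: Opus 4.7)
The plan is to translate the algebraic conditions 1--6 on $f,\Lambda$ into geometric properties of the parametrized curve $\Gamma(r)=(f(r),\Lambda(r),1)$ by computing the auxiliary quantities of Section~\ref{su5.1}. From $\dot\Gamma=(f',\Lambda',0)$ and $\ddot\Gamma=(f'',\Lambda'',0)$ one obtains
\begin{equation*}
\Gamma^*=[\Gamma,\dot\Gamma]=(-\Lambda',\,f',\,f\Lambda'-\Lambda f'),\qquad
\varkappa_\Gamma=(\Gamma,\dot\Gamma,\ddot\Gamma)=f'\Lambda''-f''\Lambda'.
\end{equation*}
These formulas identify regularity, singular points, straightening points, and inflection points of the projective curve $P\Gamma$ with the conditions of Assumption~\ref{assump2.4}.

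I would then match (iv) with conditions 1--3. Smoothness of the gluing of $\Gamma$ with $\Gamma_-(r)=(-f(-r),\Lambda(-r),1)$ at $r=0$ amounts to $\Gamma^{(k)}(0^+)=\Gamma_-^{(k)}(0)$ for all $k\ge 1$; since $\Gamma_-^{(k)}(0)=((-1)^{k+1}f^{(k)}(0),(-1)^k\Lambda^{(k)}(0),0)$, these match exactly when $f^{(2j)}(0)=0$ and $\Lambda^{(2j+1)}(0)=0$ for every $j\ge 0$ (and analogously at $r=L$). By a standard Taylor series argument these are precisely the parity conditions for $f$ to extend smoothly as an odd $2L$-periodic function and $\Lambda$ as an even one, i.e., conditions 1 and 2. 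The unit-speed hypothesis $|\Gamma'(0)|=1$ combined with the derived $\Lambda'(0)=0$ gives $|f'(0)|=1$, and condition (ii) fixes the sign as $f'(0)=+1$; symmetrically $f'(L)=-1$, which is condition 3. The equivalence of (ii) with the positivity $f>0$ on $(0,L)$ and the boundary values $f(0)=f(L)=0$ is direct, and boundedness is automatic on the compact domain.

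Next I would address (i) and (iii) versus conditions 4--6. Regularity of $\Gamma$ reads $\dot\Gamma\ne 0$, i.e., $(f')^2+(\Lambda')^2>0$, which is condition 5. Given regularity, $\Gamma$ has no singular points in the sense of Definition~\ref{def5.4}, so the good-curve property reduces, by Remark~\ref{rem5.6}, to the requirement that every zero of $\varkappa_\Gamma$ be simple, which is condition 6. For (iii), at an inflection point $r_\ell^\circ$ one has $\varkappa_\Gamma(r_\ell^\circ)=0$: a vertical tangent forces $f'(r_\ell^\circ)=0$, hence $f''(r_\ell^\circ)\Lambda'(r_\ell^\circ)=0$; regularity gives $\Lambda'(r_\ell^\circ)\ne 0$, so $f''(r_\ell^\circ)=0$, contradicting Morseness of $f$; the horizontal case is analogous via Morseness of $\Lambda$. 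Conversely, given (i) and (iii), at any interior critical point $r_i$ of $f$ regularity yields $\Lambda'(r_i)\ne 0$ and the tangent is vertical, so (iii) prohibits $r_i$ from being an inflection point; thus $\varkappa_\Gamma(r_i)=-f''(r_i)\Lambda'(r_i)\ne 0$, forcing $f''(r_i)\ne 0$. The same pattern handles Morseness of $\Lambda$ at its interior critical points.

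The main obstacle is careful endpoint bookkeeping. Because of the parity imposed by conditions 1 and 2, one has $f'(0)=1$, $f''(0)=0$, $\Lambda'(0)=0$, $\Lambda'''(0)=0$, so
\begin{equation*}
\varkappa_\Gamma(0)=\Lambda''(0),\qquad \dot\varkappa_\Gamma(0)=f'(0)\Lambda'''(0)-f'''(0)\Lambda'(0)=0,
\end{equation*}
and analogously at $r=L$. Consequently condition 6 (equivalently, the good-curve property in (i)) forces $\Lambda''(0),\Lambda''(L)\ne 0$, which is exactly the Morse condition for $\Lambda$ at its boundary critical points. Combining this with the arguments above yields the pairwise disjointness of the three sets $\{r_i\}$, $\{r_j^*\}$, $\{r_\ell^\circ\}$ asserted in Notation~\ref{not3.2} and closes the logical loop in both directions.
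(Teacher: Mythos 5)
Your proof is correct and follows essentially the same route as the paper's: condition 5 is regularity of $\Gamma$, condition 6 is goodness via the simple zeros of $\varkappa_\Gamma=f'\Lambda''-f''\Lambda'$, conditions 1--3 correspond to (ii) and (iv) through the parity of the one-sided derivatives at the gluing points, and condition 4 corresponds to (iii) together with goodness. Your explicit endpoint bookkeeping --- $\varkappa_\Gamma(0)=\Lambda''(0)$ and $\dot\varkappa_\Gamma(0)=0$, so that goodness alone (rather than condition (iii)) forces the Morse condition for $\Lambda$ at the boundary critical points $r=0,L$ --- makes explicit a detail that the paper's terse proof leaves implicit.
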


\begin{proof}
Suppose that conditions 1--6 from Assumption~\ref {assump2.4} are fulfilled. Condition 5 means that the curve $\Gamma(r) = (f(r), \Lambda(r), 1)$, $0\le r\le L$, is regular. Condition 6 means that all its straightening points are inflection points. Therefore, the curve is good, and condition (i) is satisfied. Condition (ii) follows from the compactness of the curve $\Gamma$ and from condition 1. 
Conditions 4 (the Morse property of $f$ and $\Lambda$) and 5 (regularity) imply that the curvature of the curve $\Gamma$ does not vanish at the points with vertical or horizontal tangent lines, thus by Remark~\ref{rem5.9} these points are not inflection points, hence condition (iii) holds.
Condition (iv) follows from conditions 1--3.

Let us show the inverse, i.e., that conditions (i)--(iv) imply conditions 1--6. Conditions 1 and 2 follow from (ii) and (iv). Condition 3 ($f'(0)=1$ and $f'(L)=-1$) follows from condition (iv) and the non-negativity of $f(r)$ (see condition (ii)). Condition 4 (the Morse property of $f$ and $\Lambda$) follows from the regularity and goodness conditions (see condition (i)) and condition (iii). Condition 5 (regularity) follows from (i). Condition 6 follows from condition (i) in view of Remark~\ref{rem5.6}.

Lemma~\ref{le5.11} is proved.
\end{proof}

\subsection{Geometric and dynamical properties of the bifurcation curve $\gamma_1=(h,k)$}
Recall that the bifurcation diagram of the magnetic geodesic flow $S(f,\Lambda)$ consists of two curves:
\begin{equation*}
\begin{split}
&\gamma_1(r) = (h(r), k(r)) = \left(\dfrac{1}{2}\left(\dfrac{\Lambda'(r)}{f'(r)}\right)^2,\, \Lambda(r) - f(r)\dfrac{\Lambda'(r)}{f'(r)}\right),\quad r \in I, \\
&\gamma_2(r) = (0, \Lambda(r)),\quad r\in [0,L].
\end{split}
\end{equation*}
Based on the curve $\gamma_1=(h,k)$, we construct a new curve $\gamma=(a,-1,k)=(\pm\sqrt{2h},-1,k)$ obtained from the curve $\gamma_1=(h,k)$ via the (two-valued) mapping
\begin{equation} \label {eq5.3}
R \colon (h,k)\to(\pm\sqrt{2h}, -1, k) = (a, -1, k)
\end{equation}
by choosing the sign in front of the radical so that the condition $k'(r) a'(r) \leqslant 0$ is satisfied (by this condition, the function $a(r)=\pm\frac{\Lambda'(r)}{f'(r)}$ is uniquely determined for $r\in I$ and has the form $a(r) = \frac{\Lambda'(r)}{f'(r)}$, as we will show in  Lemma~\ref{le5.12} below).

Next, we will formulate the properties of the bifurcation diagram in terms of the curve $\gamma$ rather than $\gamma_1$.

\begin{lemma}[Tangent line to $\Gamma$; projective duality of curves $P\Gamma=(f:\Lambda:1)$ and $P\gamma=(a:-1:k)$]
	\label{le5.12}
	The tangent line to the curve $\Gamma = (f, \Lambda, 1)$ at any point $\Gamma(r)$, $r\in I$, is given by the equation
\begin{equation}
\label{eq5.4}
a(r) f - \Lambda + k(r) = 0.
\end{equation}
In particular, the coefficients of this equation are computed by the formulas 
\begin{equation} \label{eq5.5}
a(r)=\dfrac{\Lambda'(r)}{f'(r)},\quad k(r)=\Lambda(r)-f(r)\dfrac{\Lambda'(r)}{f'(r)},\quad r\in I,
\end{equation}
expressing the curve $\gamma(r) = (a(r), -1, k(r))$, $r\in I$, in terms of the original curve $\Gamma(r)$. That is, the equality $P\gamma = (P\Gamma|_I)^*$ of projective curves is true.
\end{lemma}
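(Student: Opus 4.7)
The plan is to split the lemma into three routine computations and one sign-consistency check, all of which are forward and can be carried out directly from the definitions without invoking any nontrivial general theory.

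First I would establish equation~\eqref{eq5.4} by writing down the standard equation of the tangent line to the planar curve $(f(r),\Lambda(r))$ in the affine chart $\{z=1\}$. Since $r\in I$ means $f'(r)\ne0$, the curve is regular at $\Gamma(r)$, and its tangent line passes through $(f(r),\Lambda(r))$ with direction $(f'(r),\Lambda'(r))$, yielding the equation
\begin{equation*}
\Lambda'(r)\bigl(f-f(r)\bigr)-f'(r)\bigl(\Lambda-\Lambda(r)\bigr)=0.
\end{equation*}
Dividing by $f'(r)$ and rearranging reads off the coefficients $a(r)=\Lambda'(r)/f'(r)$ and $k(r)=\Lambda(r)-f(r)\Lambda'(r)/f'(r)$, giving both \eqref{eq5.4} and \eqref{eq5.5} at once.

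Second, I would verify $P\gamma=(P\Gamma|_I)^*$ by a direct computation of the auxiliary curve $\Gamma^*(r)=[\Gamma(r),\dot\Gamma(r)]$ introduced in Section~\ref{su5.1}. With $\Gamma(r)=(f(r),\Lambda(r),1)$ and $\dot\Gamma(r)=(f'(r),\Lambda'(r),0)$ the cross product gives
\begin{equation*}
\Gamma^*(r)=\bigl(-\Lambda'(r),\ f'(r),\ f(r)\Lambda'(r)-\Lambda(r)f'(r)\bigr).
\end{equation*}
On $I$, where $f'(r)\ne0$, this vector is nonzero, and dividing by $-f'(r)$ produces exactly $(a(r),-1,k(r))=\gamma(r)$. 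By Remark~\ref{rem5.7}, the projectivization is unchanged under multiplication by a nowhere-vanishing smooth function, so $P\gamma=P\Gamma^*$ on $I$, which is the equality $(P\Gamma|_I)^*=P\gamma$ of projective curves.

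Third, I would justify that the formula $a(r)=\Lambda'(r)/f'(r)$ is indeed the branch selected by the sign convention $k'(r)a'(r)\le0$ from~\eqref{eq5.3}. Differentiating the proposed $k(r)=\Lambda(r)-f(r)a(r)$ yields
\begin{equation*}
k'(r)=\Lambda'(r)-f'(r)a(r)-f(r)a'(r)=-f(r)a'(r),
\end{equation*}
since $f'(r)a(r)=\Lambda'(r)$. Hence $k'(r)a'(r)=-f(r)\bigl(a'(r)\bigr)^2\le0$, using $f(r)\ge0$ from condition~(ii) of Lemma~\ref{le5.11}. This confirms that the sign choice produces precisely the formula in \eqref{eq5.5} and that $a(r)=\pm\sqrt{2h(r)}$ with the sign determined as claimed. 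There is no substantial obstacle in any of the three steps; the only thing that requires a moment of care is matching the global sign conventions (the factor $-f'(r)$ in step two and the chosen branch of the square root in step three), but both signs are unambiguous once condition~(ii) and the non-vanishing of $f'$ on $I$ are invoked.
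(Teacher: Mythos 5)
Your proposal is correct and follows essentially the same route as the paper: the core computation $\Gamma^*=[\Gamma,\dot\Gamma]=(-\Lambda',f',f\Lambda'-f'\Lambda)$, its rescaling by $-1/f'$ to $(a,-1,k)$, and the verification that $k'(r)a'(r)=-f(r)(a'(r))^2\le 0$ all coincide with the paper's argument. The only (cosmetic) differences are that you obtain the tangent-line equation \eqref{eq5.4} from the elementary point--direction formula rather than from the orthogonality of $\Gamma^*$ to $\Gamma$ and $\Gamma'$, and you fix the sign of $a$ by directly differentiating $k=\Lambda-fa$ instead of passing through the biduality $(P\Gamma)^{**}=P\Gamma$ of Lemma~\ref{le5.8}; both shortcuts are valid.
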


\begin{proof}
	By Lemma~\ref{le5.8}, the curve projectively dual to $P\Gamma$ is the projectivization of the curve
$\Gamma^*(r)$ = $\left[\Gamma,\Gamma'\right]$ = $(-\Lambda', f', f\Lambda' - f'\Lambda)$,
i.e., the projective curve
\begin{equation}
\label{eq5.6}
(P\Gamma)^* 
= P(\Gamma^*)
= (-\Lambda' : f' : f\Lambda' - f'\Lambda) = \left(\dfrac{\Lambda'}{f'} : -1 : \Lambda - f\dfrac{\Lambda'}{f'}\right) 
= (\hat a: -1: k) 
= P\hat\gamma,
\end{equation}
where $\hat\gamma := (\hat a, -1, k)$, $\hat a := \Lambda'/f' = \pm a$, and the sign is actually +, as will be shown below.

The definition of the projectively dual curve implies its geometric interpretation: the tangent line to a projective curve $P\Gamma$ at a regular point $P\Gamma(r)$ is given by the equation
\begin{equation} \label{eq5.7}
A(r) x + B(r) y + C(r) z = 0,
\end{equation}
where $\Gamma^*(r) = (A(r),B(r),C(r))$ (this follows from the orthogonality of the vector $\Gamma^*(r)$ to the vectors $\Gamma(r)$ and $\Gamma'(r)$). 
We obtain that the tangent line to the planar curve $\Gamma=(f,\Lambda,1)$ at the point $\Gamma(r)$ is given by the linear equation corresponding to the point $(P\Gamma)^*(r) = P\hat\gamma(r)$, i.e., by the equation
$\hat a(r) f - \Lambda + k(r) = 0$ if $f'(r)\ne0$, i.e., if $r\in I$.

By this geometric interpretation of the projectively dual curve (see \eqref{eq5.7}), the tangent line to the planar curve $\hat\gamma=(\hat a,-1,k)$ at the point $\hat\gamma(r)$ is given by the linear equation corresponding to the point $P(\hat\gamma^*)(r) = (P\hat\gamma)^*(r) = (P\Gamma)^{**}(r) = P\Gamma(r)$ in view of  Lemma~\ref{le5.8}, i.e., by the equation $f(r) a - \Lambda(r) + k = 0$. In particular, the slope of the tangent line to the planar curve $\hat\gamma$ is equal to $k'(r) / \hat a'(r) = -f(r) < 0$. On the other hand, $k'(r) / a'(r)$ is also negative by the construction \eqref {eq5.3} of $a(r)$ for all $r\in[0,L]$ except for the finite set of points at which the curve $\hat\gamma$ is singular or the tangent line $f(r) a - \Lambda(r) + k = 0$ to it (see above) is vertical or horizontal, i.e., for all $r\in (0,L)\setminus\{r_i,r_\ell^\circ\}$ in view of Lemma~\ref{le5.8}.
Hence, the sign $\pm$ in the relation $\hat a = \pm a$ equals ``$+$'' for all $r\in(0,L)\setminus\{r_i,r_\ell^\circ\}$.
Hence, $\hat a = a$ and $\gamma=(a,-1,k) = (\hat a,-1,k) = \hat\gamma$. 

Lemma~\ref{le5.12} is proved.
\end{proof}

Thus, by Lemma~\ref {le5.12}, the curve $P\gamma=(a:-1:k)$ is projectively dual to the curve $P\Gamma=(f:\Lambda:1)$. Hence, by Lemma~\ref {le5.8}, the inverse is also true: the curve $P\Gamma=(f:\Lambda:1)$ is projectively dual to the curve $P\gamma=(a:-1:k)$, i.e., the following statement is true.

\begin{lemma}[Tangent line to $P\gamma$] \label{le5.13}
The tangent line to the projective curve $P\gamma$ at any point $P\gamma(r)$, $r\in\left[0,L\right]$ (including the points $P\gamma(r_i):=(1:0:-f(r_i))$ ``at infinity'' and the cusp points $P\gamma(r^\circ_\ell)$) is given by the equation
$$
f(r) a - \Lambda(r) + k = 0.
$$
In particular, the coefficients of this equation have the form
\begin{equation} \label{eq5.8}
f(r) = - \dfrac{k'(r)}{a'(r)}, \quad \Lambda(r) = k(r) - a(r) \dfrac{k'(r)}{a'(r)}, \qquad
r\in I\setminus\{r^\circ_\ell\}.
\end{equation}
This gives an expression of the curve $\Gamma(r) = (f(r), \Lambda(r), 1)$ in terms of the curve $\gamma=(a, -1, k)$ everywhere except for the points $r_i$, $r^\circ_\ell$ (the latter points correspond to the cusp points of the curve $\gamma(r)$ or, equivalently, the inflection points of the curve $\Gamma(r)$, due to Lemma~\ref{le5.8}). Thus, $P\Gamma = (f:\Lambda:1)=(P\gamma)^*=(a:-1:k)^*$.
\end{lemma}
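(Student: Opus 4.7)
The plan is to deduce this lemma directly from Lemma~\ref{le5.12} together with the involution property of projective duality established in Lemma~\ref{le5.8}. Indeed, Lemma~\ref{le5.12} asserts $P\gamma = (P\Gamma|_I)^*$, and property~4 of Lemma~\ref{le5.8} gives $(P\gamma)^* = (P\Gamma|_I)^{**} = P\Gamma|_I$. Combined with the geometric interpretation of projective duality already used in the proof of Lemma~\ref{le5.12} (namely, that the tangent line to a good projective curve $P\delta$ at a regular point $P\delta(r)$ is the linear form whose coefficients are the components of the projectively dual point $(P\delta)^*(r)$; this follows from the orthogonality of $\delta^*(r) = [\delta(r), \dot\delta(r)]$ to both $\delta(r)$ and $\dot\delta(r)$), one obtains that the tangent line to $P\gamma$ at $P\gamma(r)$ is given by the linear form with coefficient vector $\Gamma(r) = (f(r), \Lambda(r), 1)$, i.e., by $f(r)a - \Lambda(r) + k = 0$.

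Next, to derive the formulas in~\eqref{eq5.8}, I would differentiate this tangent-line equation along the curve $\gamma$. Since for each fixed $r\in I$ the point $\gamma(r) = (a(r), -1, k(r))$ satisfies both the equation $f(r)a(r) - \Lambda(r) + k(r) = 0$ (this is just~\eqref{eq5.4} divided by signs, equivalently an identity in $r$) and the fact that $(a'(r), 0, k'(r))$ lies in the same tangent line, differentiating the identity $f(r_0) a(r) + k(r) = \Lambda(r_0)$ at $r=r_0$ gives $f(r_0) a'(r_0) + k'(r_0) = 0$, which yields $f(r_0) = -k'(r_0)/a'(r_0)$ wherever $a'(r_0)\ne 0$. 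Substituting back into the tangent equation produces $\Lambda(r_0) = k(r_0) - a(r_0) k'(r_0)/a'(r_0)$. This is valid precisely where $\gamma$ is regular, i.e., on $I\setminus\{r_\ell^\circ\}$, since the cusp points of $P\gamma$ correspond to inflection points of $P\Gamma$ by Lemma~\ref{le5.8}, property~5.

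The main technical subtlety is handling the ``infinite'' and cusp locations uniformly. At a critical point $r_i$ of $f$, one has $f'(r_i)=0$ hence $a(r_i) = \Lambda'(r_i)/f'(r_i) = \infty$, so the affine point $\gamma(r_i)$ is not defined, but the projective point $P\gamma(r_i) = (\Lambda'(r_i): -f'(r_i): f(r_i)\Lambda'(r_i) - f'(r_i)\Lambda(r_i)) = (1:0:-f(r_i))$ is well-defined and lies ``at infinity'' in the chart $y=-1$; the tangent-line equation $f(r_i) a - \Lambda(r_i) + k = 0$ still makes sense there because its coefficient of $a$ equals $f(r_i)$ which is nonzero by the regularity of $\Gamma$. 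At cusp points $P\gamma(r_\ell^\circ)$, the tangent line is still well-defined by Remark~\ref{rem5.10}, and the equation $f(r)a - \Lambda(r) + k = 0$ depends continuously on $r$; since it gives the tangent on the complement, it gives the tangent at $r_\ell^\circ$ too by continuity. The formulas~\eqref{eq5.8} of course break down at $r_\ell^\circ$ because $a'(r_\ell^\circ) = 0$ there (by Lemma~\ref{le5.8}, property~5, $P\gamma$ is singular at cusp points), which is why these points are excluded. The final assertion $P\Gamma = (P\gamma)^*$ is then a restatement of the involution property applied in the reverse direction.
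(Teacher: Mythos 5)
Your proposal is correct and takes essentially the same route as the paper: the tangent-line equation is obtained from the involution $(P\gamma)^*=(P\Gamma)^{**}=P\Gamma$ together with the geometric interpretation~\eqref{eq5.7} of the dual point as the coefficient vector of the tangent line (exactly as the paper does inside the proof of Lemma~\ref{le5.12}), and the formulas~\eqref{eq5.8} follow from the tangency condition $f(r)\,a'(r)+k'(r)=0$, which is the paper's statement that the slope $k'/a'$ equals $-f(r)$. The only blemish is a harmless sign slip in your homogeneous coordinates at infinity: the dual point is $(-\Lambda':f':f\Lambda'-f'\Lambda)$, which at $r=r_i$ gives $(1:0:-f(r_i))$ rather than $(1:0:f(r_i))$ as your written triple would yield.
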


\begin{proof}
	That the tangent line is given by the indicated equation is proved in the proof of Lemma~\ref{le5.12}. It is clear from this equation that the slope 
    of this tangent line equals $k'(r)/a'(r) = -f(r)$ for any $r\in I\setminus\{r_\ell^\circ\}$. From the equation of the tangent line and the fact that it contains the point $P\gamma(r)=(a(r):-1:k(r))$, we obtain $\Lambda(r)=k(r)+a(r)f(r)$. Substituting the proved formula for $f(r)$ in terms of $a(r)$ and $k(r)$, we obtain the required formula for $\Lambda(r)$ in terms of $a(r)$ and $k(r)$.

Lemma~\ref{le5.13} is proved.
\end{proof}

Let us show (in the following theorem) how, given the bifurcation diagram of the system $S(f,\Lambda)$, we can uniquely construct the bifurcation complex of this system (see the beginning of Sect.~\ref {s3} for its definition) and also determine the types of singular orbits (of ranks $0$ and $1$) corresponding to the vertices and arcs of the bifurcation diagram (i.e., the vertices and edges of the bifurcation complex).

\begin{theorem}[on the relation of geometric and dynamical properties of the bifurcation diagram] \label{th5.13}
\noindent
\begin{enumerate}
\item The magnetic geodesic flow $S(f,\Lambda)$ on the sphere has exactly two points of rank 0, these are the points $(0,N)$ and $(0,S)$, both of which are non-degenerate and have the center-center type.

\item The points of rank 1 form the following two one-parameter families of critical orbits: $\mathcal{C}_1^k=\bigcup\limits_{r\in I \cap (0,L)}
\mathcal{O}_{r,k(r)}$ and $\mathcal{C}_1^\Lambda=\bigcup\limits_{r\in (0,L)}\mathcal{O}_{r,\Lambda(r)}$. The critical orbit $\mathcal{O}_{r,k(r)}$, $r\in I\cap(0,L)$, is non-degenerate if and only if $k'(r) \Lambda'(r) \ne 0$. The latter condition is equivalent to $r\in I\setminus\{r_j^*,r_\ell^\circ\}=[0,L]\setminus\{r_i,r_j^*,r_\ell^\circ\}$. If this orbit is non-degenerate then it has elliptic (respectively hyperbolic) type if and only if the sign of $-k'(r) \Lambda'(r)$ is equal to ``$+$'' (respectively ``$-$''). This sign coincides with the sign of the Gaussian curvature at the point $(f(r), \Lambda(r))$ of the surface obtained by rotating the planar curve $(f,\Lambda)$ about the line $f=0$. The Gaussian curvature at the point $(a(r), k(r))$ of the surface obtained by rotating the planar curve $(a, k)$ about the line $a=0$ has the same sign, where $\gamma_1=(h,k)=(a^2/2,k)$.
The corresponding signs are shown in Fig.~\ref {fig_7}--\ref {fig_9}.

The critical orbit $\mathcal{O}_{r,\Lambda(r)}$, $r\in(0,L)$, is non-degenerate if and only if $\Lambda'(r) \ne 0$. If this orbit is non-degenerate then it is of elliptic type.

\item Degenerate critical orbits have the form $\mathcal{O}_{r,k(r)}$, $r\in\{r_j^*,r_\ell^\circ\}$, i.e., they correspond to the points $\Gamma(r_j^*)$ of the curve $\Gamma=(f,\Lambda,1)$ with horizontal tangent line ($\Lambda'(r_j^*)=0$) and the inflection points $\Gamma(r_\ell^\circ)$ of this curve. The curve $\Gamma$ and the bifurcation curve $\gamma_1$ of the magnetic geodesic flow $S(f,\Lambda)$ have the form shown in Fig.~\ref {fig_7}--\ref {fig_9} near the points corresponding to degenerate singularities, and the form shown in Fig.~\ref {fig_3} near the points corresponding to hyperbolic saddle 3-atoms.

\item The Reeb graph of the function $K|_{\{H=0\}}$ coincides with the Reeb graph of the function $\Lambda(r)$ on $[0,L]$, so it is a broken line $r_1^*,\dots,r_N^*$ obtained from the segment $\left[0,L\right]$ by adding vertices at the critical points $r_j^*$ of the function $\Lambda(r)$, and 
$$
K(\bigcup\limits_{r\in[r_j^*,r_{j+1}^*]} \mathcal{O}_{r,\Lambda(r)}) = \Lambda([r_j^*,r_{j+1}^*]) = \left[\Lambda(r_j^*),\Lambda(r_{j+1}^*)\right].
$$
For any point $p$ of the bifurcation complex, the intersection of a small circular neighborhood of this point in the bifurcation complex with a set of the form $\{h>h(p),\ k=k(p)\}$ is connected.
In other words, for any vertex of the bifurcation complex (respectively any edge corresponding to a 1-parameter family of non-degenerate singular fibers), there exists a unique face of the bifurcation complex adjacent to this vertex (respectively edge) on the right.
\end{enumerate}
\end{theorem}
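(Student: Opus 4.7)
Items 1--3 are reformulations of Theorems \ref{th3.3} and \ref{th3.7} in the language of the curve $\Gamma=(f,\Lambda)$ from Section \ref{su5.1}. Item 1 is Theorem \ref{th3.3}(1). The non-degeneracy criteria and the elliptic/hyperbolic classification in item 2 follow from Theorem \ref{th3.7}(1,2) together with identity \eqref{eq3.6}. The sign coincidence of $-k'(r)\Lambda'(r)$ with the Gaussian curvature of the surface rotating $(f,\Lambda)$ about $\{f=0\}$ is footnote \ref{footnote1} of Theorem \ref{th3.7}. For the analogous claim about the surface rotating $(a,k)$ about $\{a=0\}$, I would use Lemma \ref{le5.12} ($a=\Lambda'/f'$, $k=\Lambda-fa$) to derive the identities $k'=-fa'$ and $a'k''-a''k'=-f'(a')^2$, then substitute into the standard formula for the Gaussian curvature of a surface of revolution to obtain $K_\gamma=ff'/(a'(1+f^2)^2 a)$, whose sign is $\sgn(\Lambda' a')=\sgn(-k'\Lambda')$ in view of $\sgn(a)=\sgn(\Lambda' f')$. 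Item 3 is Theorem \ref{th3.7}(3), combined with Lemmas \ref{le5.11} and \ref{le5.12}, which translate ``inflection point of $\Gamma$'' to ``cusp point of $\gamma_1$'' and ``horizontal-tangent point of $\Gamma$'' to ``tangency point of $\gamma_1$ and $\gamma_2$''.

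For the first part of item 4, the key observation is that the kinetic-energy Hamiltonian $H=\tfrac{1}{2}g^{ij}p_ip_j$ vanishes exactly on the zero section of $T^*M$: in coordinates outside the poles, $H=p_r^2/2+(K-\Lambda(r))^2/(2f^2)$ equals zero iff $p_r=0$ and $K=\Lambda(r)$; at the poles, $H\sim\tfrac12(p_x^2+p_y^2)$ vanishes iff $p=0$. Therefore $\{H=0\}$ is the zero section, diffeomorphic to $M\cong S^2$, and on it $K$ restricts to $\Lambda(r)$ (extended continuously to $\Lambda(0)$ at $N$ and $\Lambda(L)$ at $S$). The level sets $\{K=c\}\cap\{H=0\}$ are unions of circular parallels $\{r=r_0\}\times S^1_\varphi$ for $r_0\in\Lambda^{-1}(c)\cap(0,L)$, together with a pole if $c\in\{\Lambda(0),\Lambda(L)\}$. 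Collapsing these connected level sets to points identifies the Reeb graph of $K|_{\{H=0\}}$ with the Reeb graph of $\Lambda\colon[0,L]\to\mathbb{R}$, a broken line with vertices at the critical points $r_j^*$.

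The right-adjacency claim I would reduce to a local analysis of the domain of possible motion $R_{h,k^*}=\{U_{k^*}\leq h\}$ (Remark \ref{rem:Delta}). Fix a vertex or edge-point $p=(h^*,k^*)$ of the bifurcation complex corresponding to one or more critical circles at parameter value(s) $r^*$, so that $h^*$ is a critical value of $U_{k^*}$. The lift of the half-line $\{h>h^*,k=k^*\}$ to the bifurcation complex near $p$ enumerates the changes to the collection of connected components of $R_{h,k^*}$ as $h$ crosses $h^*$ from below. By Theorem \ref{th3.7} and Lemma \ref{le4.5}, the local contribution at each critical point of $U_{k^*}$ with value $h^*$ is one of: a local minimum (elliptic orbit), where a new small component appears; a local maximum (hyperbolic orbit), where two existing components merge into one; an inflection point (parabolic orbit), where the monotone $(r-r^*)^3$ behavior produces one new extended component; or the horizontal-tangent degeneracy with $h^*=0$ (asymmetric elliptic fork), where the quartic-type minimum $U_{k^*}(r)\sim C(r-r^*)^4$ contributes one new small component as $h>0$ grows. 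In every case, exactly one locally distinguishable component of $R_{h,k^*}$ appears on the right side, yielding the unique adjacent face.

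The hard part will be the careful bookkeeping at degenerate vertices (parabolic cusps where $U_{k^*}$ has an inflection-type degenerate critical point, and asymmetric elliptic forks where the degeneracy is of order four at $h^*=0$ and the two bifurcation curves $\gamma_1$ and $\gamma_2$ meet tangentially) and at self-intersection points of $\gamma_1$, where several distinct critical circles project to the same point of the bifurcation diagram and correspond to several distinct vertices or edges of the bifurcation complex, each of which must be treated separately by the local analysis above.
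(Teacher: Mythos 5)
Your proposal is correct, and for items 1--3 and the Reeb-graph part of item 4 it follows the paper's own route: everything is reduced to Theorems~\ref{th3.3} and~\ref{th3.7} (via~\eqref{eq3.6}) and to the identification of $\{H=0\}$ with the zero section, on which $K$ restricts to $\Lambda(r)$. Two sub-arguments, however, are handled by a genuinely different method. First, for the sign of the Gaussian curvature of the surface of revolution of $(a,k)$, the paper argues geometrically through projective duality: by~\eqref{eq5.5} the tangent line to $(a,k)$ at $(a(r),k(r))$ meets the axis $a=0$ at ordinate $\Lambda(r)$, and the monotonicity of $\Lambda$ along the curve controls on which side of the tangent plane the surface locally lies. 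You instead compute the curvature directly from $k'=-fa'$ and $a'k''-a''k'=-f'(a')^2$, obtaining $ff'/(a'(1+f^2)^2a)$ with sign $\sgn(a'\Lambda')=\sgn(-k'\Lambda')$; this computation is correct and easier to verify line by line, while the paper's version makes the duality mechanism visible. Second, for the right-adjacency claim the paper simply cites \cite[Prop.~2~(B)(f)]{KO:20}, whereas you sketch a self-contained proof via the sublevel sets of the effective potential; this is a sound and more informative route. To close your sketch cleanly, replace the case-by-case enumeration of the four local models by the single monotonicity observation: for small $\varepsilon>0$ there are no critical values of $U_{k^*}$ in $(h^*,h^*+\varepsilon]$, so no component of $\{U_{k^*}\le h\}$ is born near the given component $\Delta^*$ of $\{U_{k^*}\le h^*\}$ and exactly one component (the one containing $\Delta^*$) survives; this argument also treats in one stroke the fibers carrying several critical circles (the atoms $V_{\sigma_1\sigma_2}\times S^1$), which your phrase about treating self-intersection points ``separately'' slightly obscures --- such circles lie on a common fiber and therefore correspond to a single vertex of the bifurcation complex, not to several.
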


\begin{proof}
1) The description, non-degeneracy and ellipticity of rank-$0$ points follow from \cite[Prop.~1]{KO:20} or item~1 of Theorem~\ref{th3.3}.

2) The description of rank-$1$ orbits follows from item~2 of Theorem~\ref{th3.3} (which in turn follows from \cite[Prop.~2~(A)]{KO:20}).
The criteria for non-degeneracy of an orbit $\mathcal{O}_{r,k(r)}$ from $\mathcal{C}_1^k$ and the assertion that the type of a non-degenerate critical orbit $\mathcal{O}_{r,k(r)}$ is determined by the sign of $-k'(r)\Lambda'(r)$ follow from item~1 of Theorem~\ref{th3.7} (some of these assertions follow
from \cite[Prop.~2~(A) and~(B)(c)]{KO:20}, see Remark~\ref {rem3.11}).
The coincidence of the signs of the quantity $-k'(r)\Lambda'(r)$ and of the Gaussian curvature of the surface of revolution obtained from the curve $(f,\Lambda)$ follows from the explicit formula for the Gaussian curvature of the surface of revolution (see footnote$^{\ref {footnote1}}$).
Let us show this geometrically: due to~\eqref{eq5.4}, the tangent line to the curve $(f,\Lambda)$ at the point $(f(r), \Lambda(r))$ intersects the line $f=0$ at the point with ordinate $k(r)$, and from the condition $-k'(r)\Lambda'(r)>0$ we obtain that $k(r)$ decreases when moving along the curve in the direction of growth of $\Lambda(r)$, therefore a small neighborhood of the point in the surface of revolution lies on one side of the tangent plane to this surface, which means that the Gaussian curvature is non-negative (the fact that the Gaussian curvature does not vanish is verified separately).
The coincidence of the signs of the quantity $-k'(r)\Lambda'(r)$ and of the Gaussian curvature of the surface of revolution obtained from the curve $(a,k)$ is similarly proved, taking into account that the tangent line to the curve $(a,k)$ at the point $(a(r), k(r))$ intersects the line $a=0$ at the point with the ordinate $\Lambda(r)$ in view of~\eqref{eq5.5}.

The criterion for non-degeneracy of an orbit $\mathcal{O}_{r,\Lambda(r)}$ from the family $\mathcal{C}_1^\Lambda$ and the fact that non-degenerate orbits $\mathcal{O}_{r,\Lambda(r)}$ are elliptic follow from item~2 of Theorem~\ref{th3.7} (which in turn follows from \cite[Prop.~2~(A) and~(B)(c)]{KO:20}).

3) Note that the ``zero'' isoenergy set $\{H=0\}$ consists of two points $(0,N)$, $(0,S)$ of rank 0 (which are non-degenerate and have the center-center type according to item 1 from above) and a one-parameter family of critical orbits $\mathcal{O}_{r,\Lambda(r)}$, $r\in(0,L)$, of rank 1 (which are non-degenerate and have the elliptic type if $r\notin\{r_1^*,\dots,r_N^*\}$ according to the previous item). Therefore, the Reeb graph of the function $K|_{\{H=0\}}$ has the required form.

The connectedness of the intersection of a small circular neighborhood of a point $p$ in a bifurcation complex with a set of the form $\{h>h(p),\ k=k(p)\}$ follows from \cite[Prop.~2~(B)(f)]{KO:20}.
The theorem is proved.
\end{proof}

\begin{figure}[ht!]
    \begin{minipage}[r]{0.49\linewidth}
        \centering
 	\includegraphics[scale=1.58]{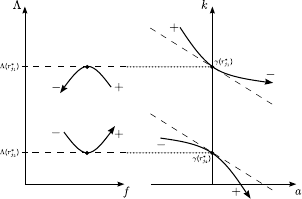} \\ \textit{a)}
    \end{minipage}
    \hfill
    \begin{minipage}[r]{0.49\linewidth}
 	\centering
 	\includegraphics[scale=1.58]{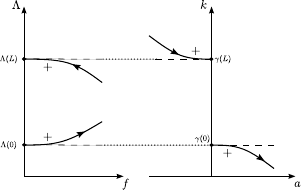} \\ \textit{b)}
    \end{minipage}
    \caption{The behavior of the curves $\Gamma=(f,\Lambda,1)$ and $\gamma=(a,-1,k)$ near the points $r_j^*$ with $\Lambda'(r_j^*)=0$: 
    \textit{a)} $0<r_j^*<L$,
    \textit{b)} $r_0^*=0$, $r_N^*=L$} 
    \label{fig_7}
\end{figure}

\begin{figure}[ht!] 
    \begin{minipage}[r]{0.98\linewidth}
        \centering
        \includegraphics[scale=1.58]{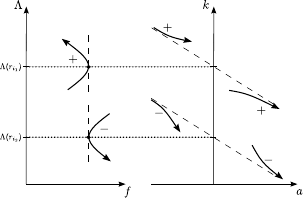} \\
    \end{minipage}
    \caption{The behavior of the curves $\Gamma=(f,\Lambda,1)$ and $\gamma=(a,-1,k)$ near the points $r_i$ with $f'(r_i)=0$}
    \label{fig_8}
\end{figure}

\begin{figure}[ht!] 
    \begin{minipage}[c]{0.48\linewidth}
 	\centering
 	\includegraphics[scale=1.46]
            {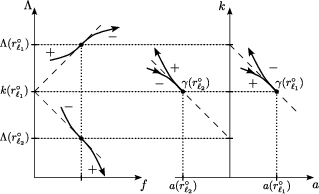} \\ \textit{a)}
    \end{minipage}
    \hfill
    \begin{minipage}[c]{0.48\linewidth}
 	\centering
        \includegraphics[scale=1.46]
        {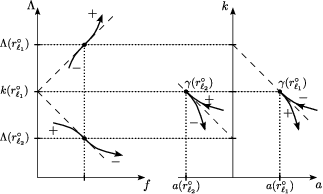} \\ \textit{b)}
    \end{minipage}
    \caption{The behavior of the curves $\Gamma=(f,\Lambda,1)$ and $\gamma=(a,-1,k)$ near the points $r_\ell^\circ$ with $(f'\Lambda''-f''\Lambda')|_{r=r_\ell^\circ}=0$: 
    \textit{a)} two options with $k(r_{\ell_1}^\circ)=k(r_{\ell_2}^\circ)$, \textit{b)} two more options}
    \label{fig_9}
\end{figure}

\begin{remark} \label{rem5.15}
   Clearly, the curve $\gamma=(a,-1,k)$ consists of open arcs $\gamma_i=\gamma|_{(r_{i-1},r_i)}$ separated by the points $P\gamma(r_i):=(1:0:-f(r_i))$ ``at infinity''. These arcs are divided into sub-arcs by cusp points and intersection points with the line $h=0$, i.e., $a=0$.
   Item~2 of Theorem~\ref{th5.13} describes the type of critical orbits corresponding to these sub-arcs. In particular, for $r\in(0,\varepsilon)$ or $r\in(L-\varepsilon,L)$, the critical orbit $\mathcal{O}_{r,k(r)}$ is elliptic (Fig.~\ref {fig_7},~\textit{b}). 
   Two neighboring sub-arcs of the same arc 
   (Fig.~\ref {fig_9} and~\ref {fig_7},~\textit{a}) correspond to different types of critical orbits. Two neighboring sub-arcs of different arcs (separated by a point ``at infinity'') correspond to the same type of critical orbits (Fig.~\ref {fig_8}).
The sign of the Gaussian curvature of the above-mentioned surfaces (see 
footnote$^{\ref{footnote1}}$) is closely related to the sign of the curvature of the planar curves $(f,\Lambda)$ and $(a,k)$, which are meridians of these surfaces of revolution. The behavior of the latter sign is studied in Remark~\ref{rem5.9} and Corollary~\ref{cor5.17}.
\end{remark}

By Lemma~\ref {le5.11}, the curve $\Gamma$ has properties (i)--(iv) characterizing its behavior relatively the points $r_i,r_j^*,r_\ell^\circ$ (i.e., points with a vertical or horizontal tangent line and inflection points). By Lemmas~\ref {le5.12} and~\ref {le5.13}, the curves $P\Gamma=(f:\Lambda:1)$ and $P\gamma=(a:-1:k)$ are projectively dual. Therefore, properties (i)--(iv) of the curve $\Gamma$ can be reformulated in terms of the curve $\gamma$. This is done in the following lemma.

The behavior of the curves $\Gamma$ and $\gamma$ near the points $r_i,r_j^*,r_\ell^\circ$ is shown in Fig.~\ref{fig_7}--\ref{fig_9}, where the arcs are marked with the sign of the Gaussian curvature of the surface of revolution obtained by rotating the arc around the ordinate axis (this sign describes the dynamical properties of the arc, see Theorem~\ref{th5.13}).

\begin{lemma}[Geometric characterization of the curve $\gamma=(a,-1,k)$] \label{le5.16}
Suppose that $P\Gamma=(f:\Lambda:1)$ and $P\gamma=(a:-1:k)$ are projective curves such that $(P\Gamma)^*=P\gamma$ on $r\in I=[0,L]\setminus\{r_i\}$.
Then the fulfillment of conditions {\rm (i)--(iv)} from Lemma~\ref {le5.11}  for the planar curve $\Gamma=(f,\Lambda,1)$ is equivalent to the fulfillment of the following conditions {\rm (i$^*$)--(iv$^*$)} for the planar curve $\gamma=(a,-1,k)$:

\begin{enumerate}
\item[\rm(i$^*$)] The projective curve $P\gamma(r) = (a(r) : -1 : k(r))$, $r\in I=[0,L]\setminus\{r_i\}$, extends by continuity to the punctured points $r_i\in(0,L)$ by points $P\gamma(r_i) = (1: 0: -f_i)$ ``at infinity'' with $f_i>0$ to a good projective curve $P\gamma(r)$, $0\le r\le L$, having no inflection points;

\item[\rm(ii$^*$)] the tangent line to the projective curve $P\gamma$ at any point $P\gamma(r)$, $r\in\left[0,L\right]$ (including the points $P\gamma(r_i)$ ``at infinity'' and the cusp points $P\gamma(r^\circ_\ell)$) is different from the ``line at infinity'' and is not vertical, moreover this line has a negative slope for any point except for the endpoints $p_0=P\gamma(0)$, $p_1=P\gamma(L)$;

\item[\rm(iii$^*$)] all cusp points of the projective curve $P\gamma(r)$, $0\le r\le L$, are finite and do not lie on the axis $a=0$;

\item[\rm(iv$^*$)] the projective curve $P\gamma(r)$, $0\leqslant r\leqslant L$, is smoothly glued at both endpoints $p_0$ and $p_1$ with its reflection
\begin{equation*}
P\gamma_-(r) = (-a(-r), -1, k(-r)),\quad -L\leqslant r\leqslant0,\ r\notin\{-r_i\},\qquad P\gamma_-(-r_i) = (-1: 0: -f_i),
\end{equation*}
with respect to the vertical axis $a=0$ into a smoothly parametrized closed projective curve such that $|\gamma'(0)|=|\gamma''(0)|$ and
$|\gamma'(L)|=|\gamma''(L)|$ (i.e., the length of the velocity vector of the curve $\gamma$ at the endpoints equals the radius of curvature).
\end{enumerate}
\end{lemma}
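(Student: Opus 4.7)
My plan is to establish each of the four equivalences (i)$\Leftrightarrow$(i$^*$), (ii)$\Leftrightarrow$(ii$^*$), (iii)$\Leftrightarrow$(iii$^*$), (iv)$\Leftrightarrow$(iv$^*$) separately, by systematically translating each condition on $\Gamma$ into its dual condition on $\gamma$ via the projective duality $(P\Gamma)^*=P\gamma$ established in Lemmas~\ref{le5.12} and~\ref{le5.13}, combined with the involutivity and cusp/inflection swap from Lemma~\ref{le5.8}. The guiding idea is that points of $P\Gamma$ encode tangent lines of $P\gamma$ and conversely, inflection points of one curve correspond to cusp points of the other, and vertical (resp.\ horizontal) tangent directions on the $\Gamma$-side correspond to points ``at infinity'' (resp.\ points on the axis $a=0$) on the $\gamma$-side.

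For (i)$\Leftrightarrow$(i$^*$), goodness is preserved under duality by Lemma~\ref{le5.8}(4); regularity of $\Gamma$ (no cusps) corresponds to absence of inflection points on $P\gamma$ by Lemma~\ref{le5.8}(5); and the explicit formula $\Gamma^*=(-\Lambda',f',f\Lambda'-f'\Lambda)$ shows that at each $r_i$ with $f'(r_i)=0$ the curve $P\gamma$ acquires by continuity the point $(1:0:-f(r_i))$ ``at infinity'', with $f(r_i)>0$ supplied by (ii). For (iii)$\Leftrightarrow$(iii$^*$), inflection points of $\Gamma$ are cusps of $P\gamma$ by the same property, and the same formula shows that the cusp $P\gamma(r_\ell^\circ)$ is finite iff $f'(r_\ell^\circ)\ne0$ (tangent to $\Gamma$ non-vertical) and lies off the axis $a=0$ iff $\Lambda'(r_\ell^\circ)\ne0$ (tangent non-horizontal).

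For (ii)$\Leftrightarrow$(ii$^*$), Lemma~\ref{le5.13} gives the tangent line to $P\gamma$ at $P\gamma(r)$ as the affine line $k=-f(r)a+\Lambda(r)$. This line is neither the line at infinity nor vertical precisely when $f(r)$ and $\Lambda(r)$ are finite (i.e., $\Gamma$ is bounded); its slope $-f(r)$ is negative precisely when $f(r)>0$; and it is horizontal precisely when $f(r)=0$, which by (ii) happens exactly at the endpoints $r=0,L$.

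The most delicate piece is (iv)$\Leftrightarrow$(iv$^*$), which requires explicit derivative computations near the endpoints. Writing $\alpha:=f'(0)$ and invoking the oddness/evenness conditions implicit in (iv) (so that $f(0)=f''(0)=\Lambda'(0)=\Lambda'''(0)=0$), I will differentiate $a=\Lambda'/f'$ and $k=\Lambda-fa$ to obtain $a(0)=k'(0)=a''(0)=0$, $a'(0)=\Lambda''(0)/\alpha$, and $k''(0)=-\Lambda''(0)$. Hence $|\gamma'(0)|^2=\Lambda''(0)^2/\alpha^2$ and $|\gamma''(0)|^2=\Lambda''(0)^2$, so the equality $|\gamma'(0)|=|\gamma''(0)|$ is equivalent to $\alpha^2=1$; the half-plane orientation from (ii) forces $\alpha=1$, i.e.\ $f'(0)=1$, and the analogous calculation at $r=L$ gives $f'(L)=-1$. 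The smooth gluing of $P\gamma$ with $P\gamma_-$ at both endpoints is then equivalent, by an inductive parity argument on the higher derivatives, to $a$ being odd and $k$ being even near $0$ and $L$; transporting these parities back through the explicit formulas $f=-k'/a'$ and $\Lambda=k+fa$ (and their inverse relations) yields exactly the oddness of $f$ and evenness of $\Lambda$. This inductive parity bookkeeping through the nonlinear change of variables $\Gamma\leftrightarrow\gamma$ is where the main technical work of the proof resides.
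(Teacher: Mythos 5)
Your overall strategy coincides with the paper's: translate each condition through the duality dictionary of Lemmas~\ref{le5.12} and~\ref{le5.13} (points of $P\Gamma$ $\leftrightarrow$ tangent lines of $P\gamma$, inflections $\leftrightarrow$ cusps, vertical tangents of $\Gamma$ $\leftrightarrow$ points of $\gamma$ ``at infinity'', horizontal tangents $\leftrightarrow$ points on $a=0$), and settle the endpoint condition by the explicit computation $a'(0)=\Lambda''(0)/f'(0)$, $k''(0)=-\Lambda''(0)$, so that $|\gamma'(0)|=|\gamma''(0)|\iff f'(0)^2=1$. All of this matches the paper's proof, including the numerics. One structural caveat: the four equivalences cannot actually be proved ``separately'' as announced --- e.g.\ the clause $f_i>0$ in (i$^*$) needs (ii) and the interiority $r_i\in(0,L)$ needs (iv), as you yourself tacitly admit --- so the proof must be organized, as in the paper, as the two implications between the conjunctions. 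This is harmless but should be stated.

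The genuine gap is in the direction (i$^*$)--(iv$^*$) $\Rightarrow$ (iv). Your ``parity bookkeeping'' transports oddness/evenness of $(a,k)$ to $(f,\Lambda)$ through $f=-k'/a'$ and $\Lambda=k+fa$, but near the endpoints this division is legitimate only if $a'(0)\ne0$ and $a'(L)\ne0$; a priori, an odd $a$ with even $k$ could have $a'(0)=0$, in which case $f=-k'/a'$ need not extend smoothly (or at all) to $r=0$ and the whole recovery of an odd $f$ and even $\Lambda$, as well as the identity $|\Gamma'(0)|=|k''(0)/a'(0)|=1$, collapses. The paper closes this by observing that $a'(0)=0$ together with the parities, the smoothness from (iv$^*$) and the goodness from (i$^*$) would force $p_0=P\gamma(0)$ to be a cusp point lying on the axis $a=0$, contradicting (iii$^*$); hence $a'(0)\ne0$. (A companion point you also skip: one must first know that $p_0$ is a finite point of $P\gamma$ lying on $a=0$, which comes from (i$^*$) and (iv$^*$).) You should insert this step explicitly before running the inductive parity argument; with it, your proof goes through and is essentially the paper's.
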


\begin{proof}
Suppose that $\Gamma(r)$ is a planar curve satisfying conditions (i)--(iv) of Lemma~\ref{le5.11}. Let $r_i,r_\ell^\circ$ be the zeros of $f'(r)$ and $f'(r)\Lambda''(r)-f''(r)\Lambda'(r)$ respectively. Let $(P\Gamma)^* = P\gamma$.

Let us prove (i$^*$).
Since $P\Gamma$ is good and regular by condition (i), then its projectively dual curve $(P\Gamma)^*$ is also good and has no inflection points by Lemma~\ref{le5.8}.
Let us verify that the projective curve $(P\Gamma)^*$ coincides with the union of the curve $P\gamma|_I = (a: -1: k)$ and a finite set of points $P\gamma(r_i) = (1: 0: -f(r_i))$ ``at infinity'', $i=1,\dots,n$. To do this, let us note that the intersection of the projective curve $(P\Gamma)^*$ with the projective line ``at infinity'' formed by points $(x:0:z)$ consists of the points
$(P\Gamma)^*(r) = (-\Lambda'(r) : f'(r) : f(r)\Lambda'(r) - f'(r)\Lambda(r))$ such that $f'(r)=0$, i.e., of the points $(P\Gamma)^*(r_i) = (1 : 0 : -f(r_i))$, $i=1,\dots,n$, due to the non-singularity of $\Gamma$. 
It follows from (iv) that $\Lambda'(0)=\Lambda'(L)=0$, therefore $0<r_i<L$. From this and (ii) we obtain that $f_i=f(r_i)>0$.

Let us prove (iii$^*$). It follows from Lemma~\ref{le5.8} that the cusp points of the curve $(P\Gamma)^*$ correspond to the inflection points of the curve $\Gamma$, i.e., to the parameter values $r_\ell^\circ$.
If the point $P\gamma(r_\ell^\circ)$ were infinite (i.e., had the form $(x:0:z)$) then $f'(r_\ell^\circ)=0$ due to~\eqref{eq5.6}, which means that the tangent line to the curve $\Gamma$ at the point $\Gamma(r_\ell^\circ)$ is vertical, and therefore contradicts the condition (iii). If the point $P\gamma(r_\ell^\circ)$ belonged to the axis $a=0$, then by the construction of the function $a(r)$ in \eqref {eq5.3}, we would have $\Lambda'(r_\ell^\circ)=0$, which means that the tangent line to the curve $\Gamma$ at the point $\Gamma(r_\ell^\circ)$ is horizontal, and therefore contradicts the condition (iii).

Let us prove (ii$^*$). By Lemma~\ref{le5.13}, the tangent line to the projective curve $P\gamma$ at any its point $P\gamma(r)$ is
given by the equation 
$k = -f(r) a + \Lambda(r)$. Hence, this line is not vertical, 
different from the ``line at infinity'' and its slope equals $-f(r)$. It remains to note that $f(r)>0$ for $0<r<L$ in view of (ii).

Let us prove (iv$^*$). We obtain from (iv) that the functions $f,\Lambda$ can be extended to $C^\infty$-smooth odd and even (respectively) $2L$-periodic functions $f_\pm,\Lambda_\pm$ on the real line $\mathbb R$, and therefore $\Lambda'(0)=0$, thus $|f'(0)|=|\Gamma'(0)|=1$. From here and from the formulas \eqref {eq5.5} for $a,k$ in terms of $f,\Lambda$ in Lemma~\ref{le5.12}, we obtain that the functions $a,k$ can be extended to $C^\infty$-smooth odd and even (respectively) $2L$-periodic functions $a_\pm,k_\pm$ on the open subset $I_\pm=\cup_{j\in\mathbb Z}(2Lj+I)\cup(2Lj-I)\subseteq\mathbb R$ containing our point $r=0$.
Hence, the functions $a_\pm,k_\pm$ are smooth in a neighborhood of the point $r=0$. It remains to prove that $|\gamma'(0)|=|\gamma''(0)|$.
From the above formulas for $a,k$ in terms of $f,\Lambda$, taking into account the equalities $\Lambda'(0)=\Lambda'''(0)=0$ and $|f'(0)|=1$, we obtain $a'(0)=\Lambda''(0)/f'(0)=\pm\Lambda''(0)$ and $k''(0)=-\Lambda''(0)=\mp a'(0)$. Hence, in view of the evenness of $k(r)$ and the oddness of $a(r)$, we obtain $|\gamma'(0)|=|a'(0)|=|k''(0)|=|\gamma''(0)|$.
The endpoint $r=L$ is considered similarly.
	
Let us prove the converse. Suppose that $P\gamma(r)$ is an arbitrary parametrized projective curve satisfying conditions (i$^*$)--(iv$^*$). According to (i$^*$), it is good and has no inflection points.
By Lemma~\ref{le5.8}, the curve $(P\gamma)^*$ is good and regular.
It follows from (ii$^*$) that the tangent line to the curve $P\gamma$ at any point $P\gamma(r)$ is given by the equation
$k = - f(r)a + \Lambda(r)$ in the affine chart $(a:-1:k)$,
for some functions $f(r)$ and $\Lambda(r)$, $r\in[0,L]$, where $f(r)>0$ for any $r\in(0,L)$. From the geometric interpretation of a projectively dual curve (see~\eqref{eq5.7}) we have $(P\gamma)^*=(f:\Lambda:1)=P\Gamma$, where $\Gamma(r):=(f(r),\Lambda(r),1)$. As proved, the curve $P\Gamma=(P\gamma)^*$ is good and regular, thus $\Gamma$ satisfies condition (i). 
By Lemma~\ref{le5.8}, its projectively dual curve is $(P\Gamma)^*=(P\gamma)^{**} = P\gamma$.
Since the planar curve $\Gamma=(f,\Lambda,1)$ is smooth and parametrized by a segment, it is bounded and therefore satisfies condition (ii).

Let us derive (iii) from (iii$^*$). Suppose that the tangent line to the curve $\Gamma$ at its inflection point $\Gamma(r_\ell^\circ)$ is vertical or horizontal. Consider the cusp point $P\gamma(r_\ell^\circ)$ of the curve $P\gamma$ that corresponds to it by Lemma~\ref{le5.8}. By \eqref {eq5.5} in Lemma~\ref{le5.12}, we have $a=\infty$ or $a=0$, which contradicts condition (iii$^*$).

Let us derive (iv) from (i$^*$)--(iv$^*$). We obtain from (iv$^*$) that the functions $a,k$ can be extended to $C^\infty$-smooth odd and even (respectively) $2L$-periodic functions $a_\pm,k_\pm$ on $I_\pm = \cup_{j\in\mathbb Z} (2Lj + I)\cup(2Lj - I)$. The point $p_0=P\gamma(0)$ belongs to the axis $a=0$ due to (iv$^*$), and it is finite due to (i$^*$). From this and from the goodness of $P\gamma$ (see (i$^*$)), we obtain that the functions $a_\pm,k_\pm$ are smooth in a neighborhood of the point $r=0$, i.e., $0\in I_\pm$. If $a'(0)=0$ then, from the oddness of $a_\pm$ and the evenness of $k_\pm$, the smoothness of these functions in a neighborhood of zero, and the goodness of the curve $P\gamma$ (see (i$^*$)), it follows that the point $p_0=P\gamma(0)$ is a cusp point of the curve $P\gamma$, which contradicts the condition (iii$^*$). Therefore $a'(0)\ne0$. From this and from the formulas~\eqref{eq5.8} for $f,\Lambda$ in terms of $a,k$ from Lemma~\ref{le5.13}, we obtain that the functions $f,\Lambda$ can be extended to $C^\infty$-smooth odd and even (respectively) $2L$-periodic functions $f_\pm,\Lambda_\pm$ on $\mathbb R$. 

It remains to show that $|\Gamma'(0)|=|\Gamma'(L)|=1$. Since $a'(0)\ne0$, then 
$f'=k'a''/(a')^2 - k''/a'$, due to the above formula for $f$ in terms of $a,k$. From this and from (iv$^*$), we obtain $|\Gamma'(0)| = |f'(0)| = |k''(0)/a'(0)| = |\gamma''(0)|/|\gamma'(0)| = 1$, as required. The point $r=L$ is considered similarly.

Lemma~\ref{le5.16} is completely proved.
\end{proof}

\begin{corollary}[Geometry of the curve $P\gamma(r)$, its intersection with lines $a=0$ and ``at infinity''] \label{cor5.17}
Let $\gamma(r)$ be a curve satisfying conditions {\rm (i$^*$)--(iv$^*$)} from Lemma~\ref{le5.16}, and $\Gamma(r)$ be the corresponding curve. Then:
\begin{itemize}
\item[\rm(a)] the projective curve $P\gamma$ is good and transversally intersects the line $a=0$ at finitely many points $P\gamma(r_j^*) = (0:-1:\Lambda(r_j^*))$ that are regular points of the curve $P\gamma$, $r_1^*=0 < r_2^*<\dots<r_N^*=L$;

\item[\rm(b)] the projective curve $P\gamma$ transversally intersects the ``line at infinity'' (with respect to the affine chart $(a:-1:k)$) at finitely many points $P\gamma(r_i) = (1: 0: -f(r_i))$ that are regular points of the curve $P\gamma$ and do not belong to the line $a=0$, where $0<r_1<\dots<r_n<L$;

\item[\rm(c)] on each arc $\gamma_1=\gamma|_{[0,r_1)}$, $\gamma_i=\gamma|_{(r_{i-1},r_i)}$, $\gamma_{n+1}=\gamma|_{(r_n,L]}$ of the affine curve $\gamma=(a,-1,k)$, the slope is negative  $k'(r)/a'(r)=-f(r)<0$ everywhere except for the points $r\in\{0,L\}$; the oriented curvature
\begin{equation}
\label{eq5.9}
\dfrac{a'k''-a''k'}{(a'^2+k'^2)^{3/2}} =
\dfrac{(k'/a')' a'^2}{(a'^2+k'^2)^{3/2}} =
-\dfrac{f'}{|a'| (1+f^2)^{3/2}}
\end{equation}
does not vanish, has a constant sign, $\sgn(-f')$, is unbounded near
each cusp point $\gamma(r_\ell^\circ)$, and has different signs on adjacent arcs $\gamma_{i-1}$ and $\gamma_i$;

\item[\rm(d)] the tangent line to the projective curve $P\gamma$ at any of its points $P\gamma(r_i)$ ``at infinity'' is a two-sided asymptote of the form $f(r_i) a - \Lambda(r_i) + k = 0$ for two unbounded planar arcs $\gamma|_{(r_i-\varepsilon,r_i)}$ and $\gamma|_{(r_i,r_i+\varepsilon)}$ where $\varepsilon>0$ is sufficiently small; these arcs lie in different half-planes relative to this asymptote.
\end{itemize}
\end{corollary}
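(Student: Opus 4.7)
The plan is to derive everything from the tangent line equation $f(r)a-\Lambda(r)+k=0$ of Lemma~\ref{le5.13}, together with the equivalence of conditions (i$^*$)--(iv$^*$) with the original conditions 1--6 on $(f,\Lambda)$ provided by Lemma~\ref{le5.16}. In particular, wherever $a'(r)\ne0$ one has $k'(r)/a'(r)=-f(r)$, and by Lemma~\ref{le5.8} the singular points of the good projective curve $P\gamma$ (its cusps) correspond to the inflection points $r_\ell^\circ$ of $\Gamma$, which for $\gamma=(a,-1,k)$ amount to the zeros of $a'(r)$.

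For item (a) I would observe that $a(r)=\Lambda'(r)/f'(r)$ vanishes precisely at the critical points $r_j^*$ of $\Lambda$, both in the interior (where $f'\ne0$) and at the endpoints (where the even extension of $\Lambda$ forces $\Lambda'(0)=\Lambda'(L)=0$). Finiteness is the Morse condition for $\Lambda$ from Assumption~\ref{assump2.4}. The tangent line at $P\gamma(r_j^*)$ has slope $-f(r_j^*)\le0$, so it is never vertical, which gives transversality with $\{a=0\}$; and regularity at these points follows from condition (iii$^*$), which forbids cusps on $\{a=0\}$. Item (b) is similar: by (i$^*$) the points at infinity are exactly $P\gamma(r_i)=(1:0:-f_i)$ with $f_i>0$, and the Morse condition for $f$ gives finiteness; Lemma~\ref{le5.13} produces the finite tangent line $f(r_i)a-\Lambda(r_i)+k=0$, transversal to the line at infinity; regularity at $P\gamma(r_i)$ follows because the three sets $\{r_i\},\{r_j^*\},\{r_\ell^\circ\}$ are pairwise disjoint by Assumption~\ref{assump2.4}, so $r_i\ne r_\ell^\circ$.

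For item (c), I would differentiate the identity $k'=-fa'$ to obtain $k''=-f'a'-fa''$ and then compute
\[
a'k''-a''k'=a'(-f'a'-fa'')-a''(-fa')=-f'(a')^2,\qquad a'^2+k'^2=(a')^2(1+f^2),
\]
which yields the displayed curvature formula. Its sign equals $\sgn(-f')$, and since $f$ is Morse with critical points exactly $\{r_i\}$, this sign is constant on every arc $\gamma_i$ and flips at each $r_i$; adjacent arcs therefore carry opposite signs of curvature. Within an arc, passage through a cusp preserves the sign by Remark~\ref{rem5.9}, so the sign statement is global on $\gamma_i$. At a cusp $r=r_\ell^\circ$ one has $a'(r_\ell^\circ)=0$ while $f'(r_\ell^\circ)\ne0$ by disjointness of $\{r_\ell^\circ\}$ and $\{r_i\}$, hence the curvature is unbounded.

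Item (d) is the main technical step. At $P\gamma(r_i)$ the asymptote is $f_i a-\Lambda(r_i)+k=0$ by Lemma~\ref{le5.13}. I would Taylor-expand the signed distance
\[
f_i a(r)+k(r)-\Lambda(r_i)=\bigl(f_i-f(r)\bigr)\frac{\Lambda'(r)}{f'(r)}+\bigl(\Lambda(r)-\Lambda(r_i)\bigr)
\]
near $r=r_i$, using $f(r)-f_i\sim\tfrac12 f''(r_i)(r-r_i)^2$, $f'(r)\sim f''(r_i)(r-r_i)$ (Morse property of $f$), and $\Lambda'(r_i)\ne0$ (from disjointness $r_i\notin\{r_j^*\}$). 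The leading term is $\tfrac12\Lambda'(r_i)(r-r_i)$, which changes sign as $r$ crosses $r_i$, so the two affine arcs $\gamma|_{(r_i-\varepsilon,r_i)}$ and $\gamma|_{(r_i,r_i+\varepsilon)}$ lie in opposite half-planes with respect to the asymptote. The hard part will be organizing this asymptotic cleanly (in particular cancelling the apparent $1/(r-r_i)$ pole), but it reduces to elementary Taylor expansions; the rest of the corollary is bookkeeping of what the tangent-line formula and the disjointness of $\{r_i\},\{r_j^*\},\{r_\ell^\circ\}$ imply.
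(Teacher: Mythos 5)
Your proposal is correct and follows essentially the same route as the paper's proof: both derive (a)--(c) from the duality formulas $a=\Lambda'/f'$, $k'/a'=-f$ of Lemmas~\ref{le5.12}--\ref{le5.13} together with conditions (i$^*$)--(iii$^*$) and the disjointness of $\{r_i\},\{r_j^*\},\{r_\ell^\circ\}$, and both prove (d) by Taylor-expanding $f(r_i)a(r)-\Lambda(r_i)+k(r)=(f(r_i)-f(r))\frac{\Lambda'(r)}{f'(r)}+\Lambda(r)-\Lambda(r_i)\sim\frac{\Lambda'(r_i)}{2}(r-r_i)$ at the simple zero $r_i$ of $f'$. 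The only minor difference is that the paper additionally invokes Remark~\ref{rem5.10} to identify the tangent line at $P\gamma(r_i)$ as the limit of nearby tangent lines and also records the sign changes of $a(r)$ and $k(r)$ themselves to justify the ``two-sided'' claim, whereas your expansion of the signed distance already yields both the asymptote property and the opposite half-planes.
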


\begin{proof}
(a) By Lemma~\ref{le5.12}, $a(r)=\Lambda'(t)/f'(r)$. Therefore, the condition $a(r)=0$ is equivalent to the condition $\Lambda'(r)=0$, i.e., $r=r_j^*$. From~\eqref{eq5.6} and Lemma~\ref{le5.12} we obtain $P\gamma=(P\Gamma)^* = P(\Gamma^*) = (-\Lambda':f':f\Lambda'-f'\Lambda)$. Hence, the point under consideration has the required form $P\gamma(r_j^*) = (0:f'(r_j^*):-f'(r_j^*)\Lambda(r_j^*)) = (0:-1:\Lambda(r_j^*))$, and is therefore finite due to (i$^*$).
Here we used that $f'(r_j^*)\ne0$ due to the regularity of the curve $\Gamma=(f,\Lambda,1)$ (due to condition (i) of Lemma~\ref{le5.11}). The regularity of the point $P\gamma(r_j^*)$ of the curve $P\gamma$ follows from (iii$^*$). The point $P\gamma(r_j^*)$ is a transversal intersection point of the curve $P\gamma$ and the line $a=0$, since this line is vertical and therefore not tangent to the curve $P\gamma$ due to (ii$^*$).

(b) The regularity of the point $P\gamma(r_i)$ of the curve $P\gamma$ follows from (iii$^*$). The point $P\gamma(r_i)$ does not belong to the line $a=0$ due to the finiteness of the intersection points of this line with the curve $P\gamma$ as proved above. The curve $P\gamma$ transversally intersects the ``line at infinity'' at the point $P\gamma(r_i)$, since the tangent line to this curve is different from the ``line at infinity'' due to (ii$^*$).
Both points $P\gamma(r_j^*)$ and $P\gamma(r_i)$ are not inflection points of the curve $P\gamma$, since it has no inflection points due to (i$^*$).

(c) The slope of $\gamma_i$ is negative due to (ii$^*$). 
The latter expression in the formula~\eqref{eq5.9} 
for the oriented curvature 
follows from the formula $f(r)=-k'(r)/a'(r)$ from Lemma~\ref{le5.13}. 
This expression shows that the oriented curvature does not vanish and has a constant sign $\eta=\sgn(-f')$ on each arc $\gamma_i$ of the curve $\gamma$. It also shows that the oriented curvature tends to $\eta\infty$ at points $\gamma(r)$ such that $a'(r)=0$, i.e., at the cusp points $\gamma(r^\circ_\ell)$ of the curve $\gamma=(a,-1,k)$ (in view of the formula $f(r)=-k'(r)/a'(r)$), i.e., at the inflection points of the curve $\Gamma=(f,\Lambda,1)$. It remains to note that the zeros of the function $f'(r)$ are simple, since $f$ is Morse by Lemmas \ref {le5.11} and \ref {le5.16}.
Therefore, $f'(r)$ and the curvature change sign when passing through a point $r_i$.

(d) The equation of the tangent line to $P\gamma$ is obtained in Lemma~\ref{le5.13}. Since the projective curve $P\gamma(r)$ is good, then by Remark~\ref{rem5.10} its tangent line depends smoothly on the parameter. In particular, the tangent line at the point $P\gamma(r_i)$ ``at infinity'' is the limit of the tangent lines at nearby points. Hence, it is a common asymptote for the arcs $\gamma|_{(r_i-\varepsilon,r_i)}$ and $\gamma|_{(r_i,r_i+\varepsilon)}$. Let us show that this asymptote is two-sided and these arcs lie in different half-planes with respect to this asymptote. For this, we have to check that the functions $a(r)$ and $k(r)$, as well as the composition of the parametrized curve $\gamma(r)=(a(r),-1,k(r))$ and the linear function $f(r_i)a-\Lambda(r_i)+k$, which determines the asymptote equation, change sign at the point $r=r_i$.

Since $r_i$ is a simple zero of the function $f'(r)$, then $\zeta:=\Lambda'(r_i)f''(r_i)\ne0$, and from \eqref{eq5.5} we have 
\begin{equation*} 
\begin{split} 
&f(r_i) a(r)- \Lambda(r_i) + k(r) = (f(r_i) - f(r))\dfrac{\Lambda'(r)}{f'(r)} + \Lambda(r)- \Lambda(r_i)
= \frac{\Lambda'(r_i)+o_1}{2} (r-r_i)
\sim \frac{\Lambda'(r_i)}{2} (r-r_i),\\ 
&a( r) = \frac{\Lambda'(r_i)+o_2}{(r-r_i)f''(r_i)} 
\sim \frac{\Lambda'(r_i)}{(r-r_i)f''(r_i)} \to \pm \zeta\infty,
\qquad
k(r) = -\dfrac{f(r_i)\Lambda'(r_i)+o_3}{(r-r_i)f''(r_i)}
\sim -\dfrac{f(r_i)\Lambda'(r_i)}{(r-r_i)f''(r_i)}\to \mp \zeta\infty
\end{split}
\end{equation*}
as $r\to r_i\pm0$, where $o_j=O(r-r_i)$.
Thus, all three functions change sign at the point $r=r_i$, as required.

Corollary \ref {cor5.17} is completely proved.
\end{proof}

One also can show that (iv$^*$) and the properties (a)--(d) from Corollary \ref {cor5.17} are equivalent to (i$^*$)--(iv$^*$).

\subsection{The criterion for realizability of a planar curve as the bifurcation curve of a system $S(f,\Lambda)$}

We obtain from Lemmas~\ref{le5.8}, \ref {le5.11} and \ref{le5.16} that the conditions (i$^*$)--(iv$^*$) in Lemma~\ref{le5.16} on the planar curve $\gamma(r) = (a(r), -1, k(r))$, $r\in I=[0,L] \setminus \{r_1,\dots,r_n\}$, are necessary and sufficient for realizability of this curve as the curve obtained from the bifurcation curve $\gamma_1(r)=(h(r),k(r))$ of a magnetic geodesic flow $S(f,\Lambda)$ using the construction~\eqref{eq5.3}, for some pair of functions $(f,\Lambda)$ satisfying the conditions 1--6 from Assumption~\ref {assump2.4}.
Furthermore, if such a pair of functions $(f,\Lambda)$ exists then it is unique. Moreover, the assignment $\gamma \mapsto \Gamma=(f,\Lambda,1)$ is equivariant with respect to simultaneous admissible changes of parameter $\widetilde r\mapsto r=r(\widetilde r)$ of the curves $\gamma(r)$ and $\Gamma(r)$. Here, by an {\em admissible parameter change} we mean a regular parameter change $[0,\widetilde L]\to[0,L]$, $\widetilde r\mapsto r=r(\widetilde r)$, such that $dr(0)/d\widetilde r=dr(\widetilde L)/d\widetilde r=\pm1$ and the function $dr(\widetilde r)/d\widetilde r$ extends to a $C^\infty$-smooth $2\widetilde L$-periodic even function on $\mathbb R$.

Thus, we obtain the following theorem, which in fact gives a description of all possible bifurcation diagrams for magnetic geodesic flows $S(f,\Lambda)$ on the sphere. We will illustrate it by giving two examples of the curve $\Gamma=(f,\Lambda,1)$ determining the system and the corresponding bifurcation curve $\gamma_1=(h,k)=(a^2/2,k)$ (see Fig.~\ref{fig_10} and Remark~\ref {rem5.18}).

\begin{theorem}  \label{th5.17}
Suppose that a curve $\gamma(r)=(a(r),-1,k(r))$ is given, $r\in I=[0,L] \setminus \{r_1,\dots,r_n\}$, for some points $0<r_1<\dots<r_n<L$. This curve 
satisfies conditions {\rm(i$^*$)}--{\rm(iv$^*$)} from Lemma~\ref{le5.16} if and only if there exists a pair of functions $f(r),\Lambda(r)$, $r\in[0,L]$, satisfying conditions 1--6 from Assumption~\ref {assump2.4} such that the curve $\gamma=(a,-1,k)$ coincides with the curve obtained from the bifurcation curve $\gamma_1=(h,k)$ of the magnetic geodesic flow $S(f,\Lambda)$ using the construction $(h,k)\to(\pm\sqrt{2h},-1,k)=(a,-1,k)$ from~\eqref{eq5.3}. 

Suppose that a planar curve $\gamma$ is realized by a system $S(f,\Lambda)$, for some pair of functions $(f,\Lambda)$. Then:
\begin{itemize}
\item[\rm(a)] such a pair of functions $(f,\Lambda)$ is unique and can be expressed in terms of the curve $\gamma=(a,-1,k)$ by the formulas~\eqref {eq5.8} from  Lemma~\ref{le5.13} everywhere except for a finite number of points $r_i,r_\ell^\circ$; geometrically, these formulas mean that the tangent line to the curve $P\gamma$ at a point $P\gamma(r)$ is given by the equation $f(r)a - \Lambda(r) + k = 0$;
\item[\rm(b)] for any admissible parameter change $r=r(\widetilde r)$, the 
curve $\widetilde\gamma(\widetilde r) := \gamma(r(\widetilde r))$ is realized
by the system $S(\widetilde f,\widetilde\Lambda)$ where $\widetilde f(\widetilde r) := f(r(\widetilde r))$,
$\widetilde\Lambda(\widetilde r) := \Lambda(r(\widetilde r))$.
\end{itemize}
\end{theorem}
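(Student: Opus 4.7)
The approach is to combine the two geometric characterizations already established: Lemma~\ref{le5.11} translates conditions 1--6 on $(f,\Lambda)$ into conditions (i)--(iv) on the curve $\Gamma=(f,\Lambda,1)$, while Lemma~\ref{le5.16} translates conditions (i$^*$)--(iv$^*$) on $\gamma=(a,-1,k)$ into the same conditions (i)--(iv) on its projective dual. The projective duality $P\gamma=(P\Gamma)^*$ established in Lemma~\ref{le5.12} then bridges the two sides.

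For the forward direction, assume $(f,\Lambda)$ satisfies conditions 1--6. By Lemma~\ref{le5.11}, $\Gamma$ satisfies (i)--(iv), and by Lemma~\ref{le5.12} the bifurcation data $\gamma=(a,-1,k)$ produced by the construction~\eqref{eq5.3} is precisely (the affine part of) $(P\Gamma)^*$, with the correct sign determination of $a(r)=\Lambda'(r)/f'(r)$ arising from the signs of $k'(r)/a'(r)=-f(r)<0$. Lemma~\ref{le5.16} then yields (i$^*$)--(iv$^*$). For the reverse direction, assume $\gamma$ satisfies (i$^*$)--(iv$^*$). By Lemma~\ref{le5.8}, the projective dual $(P\gamma)^*$ is a good projective curve, and by Lemma~\ref{le5.13} it admits the affine form $P\Gamma=(f:\Lambda:1)$, where $f,\Lambda$ are read off from the coefficients of the tangent line $f(r)a-\Lambda(r)+k=0$ of $P\gamma$ at $P\gamma(r)$, i.e.\ by the formulas~\eqref{eq5.8} on $I\setminus\{r_\ell^\circ\}$. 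Lemma~\ref{le5.16} shows that this $\Gamma$ satisfies (i)--(iv), hence Lemma~\ref{le5.11} gives conditions 1--6 for $(f,\Lambda)$. Running the construction~\eqref{eq5.3} on the resulting bifurcation curve $\gamma_1=(h,k)=(a^2/2,k)$ then recovers the original $\gamma$, because $P\gamma_1$ is by construction $(P\Gamma)^*$ and the sign in front of $\sqrt{2h}$ is dictated by the same constraint $k'a'\le 0$ used to define the original $a(r)$.

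Uniqueness (a) is immediate from~\eqref{eq5.8}: on the dense open subset $I\setminus\{r_\ell^\circ\}$, the pair $(f(r),\Lambda(r))$ is explicitly determined by $(a(r),k(r))$, and since both $f$ and $\Lambda$ are required to be $C^\infty$ on $[0,L]$, their values at the excluded points $r_i$ and $r_\ell^\circ$ are forced by continuity. The geometric interpretation is that $(f(r),\Lambda(r),1)$ is the point dual to the tangent line of $P\gamma$ at $P\gamma(r)$, which is well-defined at every point of a good projective curve (including cusp points) by Remark~\ref{rem5.10}. For equivariance (b), one checks directly that an admissible parameter change $r=r(\widetilde r)$ preserves each of conditions 1--6: the endpoint constraint $dr/d\widetilde r(0)=dr/d\widetilde r(\widetilde L)=\pm 1$ preserves condition 3, while the $2\widetilde L$-periodic even extension of $dr/d\widetilde r$ ensures that the odd/even parity and smoothness conditions 1 and 2 are inherited by $\widetilde f=f\circ r$ and $\widetilde\Lambda=\Lambda\circ r$; the Morse, regularity, and inflection conditions 4--6 are preserved under any regular reparametrization. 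A direct computation from~\eqref{eq5.5} then shows that the bifurcation curve of $S(\widetilde f,\widetilde\Lambda)$ is $\widetilde\gamma(\widetilde r)=\gamma(r(\widetilde r))$.

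The main obstacle, which is already handled in the preceding lemmas, is the behavior at the exceptional parameters: at the points $r_i$ the curve $P\gamma$ passes through infinity in the chart $(a:-1:k)$, while at the cusp points $r_\ell^\circ$ the formulas~\eqref{eq5.8} become indeterminate because $a'(r_\ell^\circ)=0$. The smooth extension of $f,\Lambda$ across these points is guaranteed by the goodness of $P\gamma$ (via the Hadamard-lemma argument in the proof of Lemma~\ref{le5.8}) together with the tangent-line description of Lemma~\ref{le5.13}; the smoothness at the endpoints $r=0,L$ requires the compatibility condition $|\gamma'|=|\gamma''|$ at those points, which is exactly (iv$^*$) and is precisely what makes the odd/even extensions of $f,\Lambda$ smooth through the poles of the sphere, as verified in the proof of Lemma~\ref{le5.16}.
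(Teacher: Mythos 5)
Your proposal is correct and follows essentially the same route as the paper: the equivalence is obtained by chaining Lemma~\ref{le5.11} (conditions 1--6 $\Leftrightarrow$ (i)--(iv)), the projective duality of Lemmas~\ref{le5.12}--\ref{le5.13}, and Lemma~\ref{le5.16} ((i)--(iv) $\Leftrightarrow$ (i$^*$)--(iv$^*$)), with uniqueness read off from the formulas~\eqref{eq5.8} and equivariance from the fact that reparametrization commutes with taking the projective dual. No gaps to report.
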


\begin{proof}
Let us prove the ``only if'' part of the theorem. Suppose that we are given a curve $\gamma=(a,-1,k)$ satisfying conditions (i$^*$)--(iv$^*$). By Lemmas~\ref {le5.8} and~\ref{le5.16}, there exists a curve $\Gamma=(f,\Lambda,1)$ satisfying conditions (i)--(iv) such that $(P\Gamma|_I)^*=P\gamma$. But, by Lemma~\ref{le5.11}, conditions (i)--(iv) are equivalent to conditions 1--6 from Assumption~\ref {assump2.4}. 
Recall that the conditions 1--6 were imposed on the pair of functions $f,\Lambda$ when defining the magnetic geodesic flow $S(f,\Lambda)$. By Lemma~\ref{le5.12}, the curve $P\gamma = (P\Gamma|_I)^*$ can be restored from the bifurcation curve $\gamma_1$ of the system $S(f,\Lambda)$ using the construction~\eqref {eq5.3}.

Let us prove the ``if'' part of the theorem. Suppose that a curve $\Gamma=(f,\Lambda,1)$ satisfies conditions 1--6 from Assumption~\ref{assump2.4}.
By virtue of Lemma~\ref{le5.11}, these conditions are equivalent to conditions (i)--(iv) from Lemma~\ref{le5.11}. Suppose that the curve $\gamma=(a,-1,k)$
is obtained from the bifurcation curve $\gamma_1$ of the system $S(f,\Lambda)$ using the construction~\eqref {eq5.3}. By Lemma~\ref{le5.12}, the projective curve $P\gamma$ is projectively dual to the curve $P\Gamma$, and by
Lemma~\ref{le5.16} it satisfies conditions (i$^*$)--(iv$^*$).

By Lemma~\ref{le5.13}, the formulas~\eqref {eq5.8} are true. These formulas express the curve $\Gamma=(f,\Lambda,1)$ in terms of the curve $\gamma$ everywhere, except for a finite number of points $r_i,r_\ell^\circ$. Therefore, the pair $(f,\Lambda)$ is unique.

It remains to note that under change of parameter of the curve $\Gamma$,  its projective dual curve $P\gamma = (P\Gamma)^*$ is transformed by the same change of parameter (this follows from the definitions \ref{def5.3}--\ref{def5.5} and Lemma~\ref{le5.8}).

The theorem is proved.
\end{proof}

\begin{figure}[ht!] 
    \textit{a)}
    \begin{minipage}[r]{0.98\linewidth}
		\centering
	\includegraphics[scale=1.85]{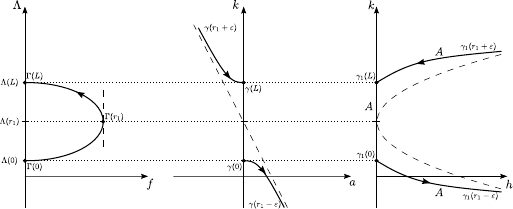} 
    \end{minipage}
    \textit{b)}
    \begin{minipage}[r]{0.98\linewidth}
        \centering
		\includegraphics[scale=1.85]{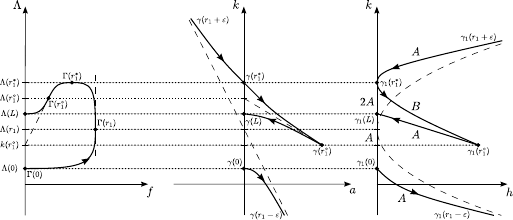}
    \end{minipage}
    \caption{The curves $\Gamma=(f,\Lambda,1)$, $\gamma=(a,-1,k)$ and $\gamma_1=(h,k)=(a^2/2,k)$ for the system $S(f,\Lambda)$ on a sphere, examples: 
    \textit{a)} a uniform magnetic field on a round sphere, 
    \textit{b)} a non-uniform magnetic field}
    \label{fig_10}
\end{figure}

\begin{remark}[about Fig.~\ref{fig_10}] \label {rem5.18}
Two examples of the corresponding curves $\Gamma=(f,\Lambda,1)$, $\gamma=(a,-1,k)$ and $\gamma_1=(h,k)=(a^2/2,k)$ for the magnetic geodesic flows $S(f,\Lambda)$ on a sphere are shown in Fig.~\ref{fig_10}. The bifurcation diagram is contained in the half-plane $h\ge0$ and consists of two points $(0,\Lambda(0))$, $(0,\Lambda(L))$ and the following curves:
\begin{itemize}
\item the curve $\gamma_1$ (oriented by the parameter $r\in I\subset[0,L]$), which has ``two-sided asymptotes--parabolas'' $\{h=U_{\Lambda(r_i)}(r_i)\}$ shown as dashed lines,
\item the interval $\gamma_2\subset\{h=0\}$, see Theorem \ref {th3.3}.
\end{itemize}
All the asymptotes--parabolas are tangent to the axis $h=0$ at their vertices.
The labels $A,B$ on the arcs of the bifurcation diagram in the plane $(h,k)$ describe the topology of the Liouville foliation near the corresponding 1-parameter family of non-degenerate singular fibers (elliptic 3-atom $A$, hyperbolic 3-atom $B$).
\end{remark}

\section*{Acknowledgement}
The authors are grateful to Anatoly Fomenko for setting the problem, to Alexey Bolsinov and Andrey Oshemkov for helpful discussions, and to the anonymous referee for valuable comments that helped to improve the presentation.

\section*{Funding}
The work by E.~A.~K.\ was performed under the development programme of the Volga Region Mathematical Center (agreement no.~075-02-2024-1438) and supported by the Russian Science Foundation, grant No.~24-71-10100.





\end{document}